\def\blfootnote{\gdef\@thefnmark{}\@footnotetext}
\newtheorem{theorem}{Theorem}[section]
\newtheorem{proposition}[theorem]{Proposition}
\newtheorem*{theorem*}{Theorem}
\newtheorem{lemma}[theorem]{Lemma}
\theoremstyle{remark}
\newtheorem{remark}[theorem]{Remark}
\newtheorem{example}[theorem]{Example} 
\numberwithin{equation}{subsection}
\begin{document}
\title{Strongly normal extensions and Algebraic Differential Equations}
\author{Partha Kumbhakar and Varadharaj Ravi Srinivasan}
\address{Indian Institute of Science Education and Research (IISER) Mohali Sector 81, S.A.S. Nagar, Knowledge City, Punjab 140306, India.}
\email{\url{parthakumbhakar@iisermohali.ac.in}, \url{ravisri@iisermohali.ac.in}}	
\thanks{The first named author would like to acknowledge the support of DST-FIST grant SR/FST/MS-I/2019/46(C)}
\setcounter{tocdepth}{1}

\begin{abstract}
Let $k$ be a differential field having an algebraically closed field of constants $C$, $E$ be a strongly normal extension of $k$, and $k^0$ be the algebraic closure of $k$ in $E.$  We prove for any intermediate differential field $k\subset K\subseteq E$ that there is an intermediate differential field $k\subset M\subseteq K$ such that either  $M$ is generated  as a differential field over $k$ by a nonalgebraic solution of a Riccati differential equation over $k$ or $k^0M$ is an abelian extension of $k^0$. Using this result,  we reprove and extend the works of Goldman  (\cite[Theorem 3 \& 4]{GL59} and Singer \cite[Corollary 3 \& 4]{Sin76}) and obtain that if a solution of an irreducible linear homogeneous differential equation $\mathcal{L}(t)=0$ satisfies an algebraic differential equation of order $d\leq 4,$ then every solution of $\mathcal{L}(t)=0$ can be expressed in terms of algebraic elements and solutions of linear homogeneous differential equations of order $\leq d.$  We also extend a work of Rosenlicht \cite[Proposition]{Ros74} and prove the following results on algebraic dependency of solutions of algebraic differential equations: a) Let $n\leq 4$ be a positive integer,  $y_1, y_2,\dots, y_{n+3}$ be distinct generic solutions  of an algebraic differential equation over $k$ of order $n$ such that the differential field $k\langle y_1,\dots, y_{n+3}\rangle$ has $C$ as its field of constants. Then, the field transcendence degree of $k\langle y_1,\dots, y_{n+3}\rangle$ is at most $n(n+3)-1$ over $k$  if and only if $k\langle y_1\rangle$ has a nonalgebraic differential subfield that can be embedded in a strongly normal extension of $k.$ b) Let $n$ be any positive integer and $y_1, y_2$ be distinct generic solutions  of an algebraic (autonomous) differential equation over $C$ of order $n$ such that  $C\langle y_1,y_2\rangle$ has $C$ as its field of constants. Then, the field transcendence degree of $C\langle y_1,y_2\rangle$ is at most $2n-1$ over $C$ if and only if $C\langle y_1\rangle$ contains a nonalgebraic strongly normal extension of $C.$
\end{abstract}
\maketitle
\tableofcontents

\section{Introduction}\label{introduction}

Let $k$ be a differential field of characteristic zero with an algebraically closed field of constants $C$.   In this article, using the theory of algebraic groups, we prove several theorems describing the structure of differential subfields of strongly normal extensions of $k.$ These theorems are then used to study certain algebraic dependence of solutions of nonlinear differential equations.

Let $E$ be a strongly normal extension of $k$ and let $K$ be a differential subfield of $E$ containing $k.$ In the next few paragraphs, we record all the results available in the literature describing the structure of $K.$ If $\mathrm{tr.deg}(K|k)\footnote{Field transcendence degree of $K$ over $k.$}=1$ then it is known that for some finite algebraic extension $\tilde{k}$ of $k,$ $\tilde{k}K=\tilde{k}\langle y\rangle,$ where either $y$ is a solution of a Riccati differential equation $y'=ay^2+by+c$ with $a,b,c\in \tilde{k}$ not all zero or $y$ is a solution of a Weierstrass differential equation $y'^2=\alpha^2(4y^3-g_2y-g_3)$ with $\alpha\in \tilde{k}\setminus \{0\},$ $g_2,g_3\in C$ and $27g^2_3-g^3_2\neq 0.$ This result is first proved by  Matsuda \cite{Mat80} and Nishioka \cite{NK89} using the theory of differential field depends rationally on arbitrary constants and its connection with differential algebraic function field in one variable having no movable singularity. One can also prove the result using differential Galois theory (see \cite[Theorem 1.1]{KRS24}).

If $k=C$ and $E$ is a Picard-Vessiot extension, then the differential Galois group of $E$ over $C$ is a commutative algebraic group having one of the following forms: $\prod^r \mathrm G_m$,  $\mathrm G_a$ or $\mathrm G_a \times \prod^r \mathrm G_m,$ for some integer $r\geq 1.$ From this it follows that $E=C(x_1,x_2,\dots,x_n),$ where $x'_i/x_i\in C$ for all $i=2,\dots, n$ and either $x'_1=1$ or $x'_1/x_1\in C$. If $K$ is a differential subfield of $E$  then since the differential Galois group is commutative, $K$ is also a Picard-Vessiot extension of $C$ and  $K=C(y_1,y_2,\dots,y_l),$ where $y'_i/y_i\in C$ for all $i=2,\dots, l$ and either $y'_1=1$ or $y'_1/y_1\in C.$ Thus, in this case, we can completely describe the structure of any differential subfield of $E.$

Let $E$ be a Picard-Vessiot extension of $k.$ Then, from a result of Yves Andr\'{e} \cite{And14}, we know that $K$ is a solution field, that is, $K$ is generated as a field over $k$ by solutions of a linear differential equation over $k$ if and only if the corresponding closed subgroup of the differential Galois group is an observable subgroup.  Therefore, the structure of certain differential subfields of a Picard-Vessiot extension can be well described.  However, if the differential Galois group is a nonsolvable linear algebraic group of positive dimension, then it contains nonobservable subgroups, such as the Borel subgroups or the parabolic subgroups, which in turn provide us, through Galois correspondence, with differential subfields that may not contain any nonzero solutions of linear homogeneous differential equations. 

Thus, in general, if $k\neq C$ and $K$ is a differential subfield of a Picard-Vessiot extension, or in general a strongly normal extension of $k$ with $\mathrm{tr.deg}(K|k)\geq 2$  then there are no complete results available in the literature that describe the structure of $K.$  

\subsection{Main results} \sloppy A finitely generated differential field extension $K$ of $k$ having $\mathrm{tr.deg}(K|k)\geq 1$ is said to be \emph{irreducible} over $k$ or $k-$\emph{irreducible} if for every differential  field $k\subset M\subseteq K,$  either $M$ is an algebraic extension of $k$ or  $K$ is an algebraic extension of $M;$ equivalently,  either $\mathrm{tr.deg}(M|k)=0$ or $\mathrm{tr.deg}(M|k)=\mathrm{tr.deg}(K|k).$ It is easily seen that any finitely generated differential field extension $K$ of $k$ is either a finite algebraic extension of $k$ or it can be resolved into a tower of differential fields $$k=K_0\subseteq K_1\subseteq K_2\subseteq \cdots \subseteq K_{m-1}\subseteq K_m=K$$ such that for each $i=1,\dots, m,$ $K_i$ is a $K_{i-1}-$irreducible differential field.  

\begin{theorem}\label{intromain}
	Let $k\subset K\subseteq E$ be differential fields,  $E$ be a Picard-Vessiot extension of $k$ and $K$ be  $k-$irreducible. Then, there are a nonzero element $y\in E$ and a monic linear differential operator $\mathcal L\in k[\partial]$ such that $\mathcal L(y)=0$ and  $K$ is a finite algebraic extension of $k\langle y'/y\rangle.$ 
\end{theorem}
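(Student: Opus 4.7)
The plan is to invoke the structural theorem advertised in the abstract (the Riccati-or-abelian dichotomy for differential subfields of strongly normal extensions, proved earlier in the paper) and then exploit the $k$-irreducibility of $K$ to compress the output to a single element.

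Applied to $k \subset K \subseteq E$, that theorem furnishes an intermediate differential field $k \subset M \subseteq K$ such that either (i) $M = k\langle w \rangle$ for some nonalgebraic solution $w$ of a Riccati equation $w' = aw^2 + bw + c$ over $k$, or (ii) $k^0 M$ is an abelian extension of $k^0$. Since $K$ is $k$-irreducible and has positive transcendence degree over $k$, the field $M$ must itself be nonalgebraic over $k$, and consequently $K/M$ is finite algebraic.

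In case (i), I would linearize the Riccati equation in the standard way. When $a \neq 0$, the projective relation $w = -u'/(au)$ identifies $w$ with a projective solution of the monic second-order operator $\mathcal{L} = \partial^2 - (b + a'/a)\partial + ac \in k[\partial]$; since $E$ is a Picard-Vessiot extension and already contains $w$, the one-dimensional Galois-stable subspace in the solution space of $\mathcal{L}$ associated with $w$ lifts to an actual element $y \in E$ with $\mathcal{L}(y) = 0$ and $y'/y = -aw$, whence $k\langle y'/y\rangle = k\langle w\rangle = M$. Degenerate subcases with $a = 0$ (where the Riccati is already linear) are handled by choosing $y$ as a non-eigenvector for the natural $\mathrm{G}_m$- or $\mathrm{G}_a$-action; for instance, when $w' = bw$ with $b \in k$, the choice $y = 1 + w$ satisfies the monic operator $\partial^2 - (b + b'/b)\partial$ over $k$ and has $k\langle y'/y\rangle = k\langle w\rangle$.

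In case (ii), the connected component of $\mathrm{Gal}(k^0 M/k^0)$ is a connected commutative linear algebraic group, hence isomorphic to $\mathrm{G}_a^p \times \mathrm{G}_m^q$. Projecting onto a one-dimensional factor yields a nonalgebraic $v \in k^0 M$ with $v'/v \in k^0$ or $v' \in k^0$; in particular $v$ is itself a Riccati solution, now over $k^0$. Averaging $(\partial - \sigma(v'/v))$ over $\mathrm{Gal}(k^0/k)$, combined with an additive adjustment (such as $y = v + 1$ in the multiplicative subcase, to escape the eigenvector trap that would otherwise force $y'/y \in k^0$) produces a monic $\mathcal{L} \in k[\partial]$ annihilating a well-chosen $y \in E$, and the $k$-irreducibility of $K$ forces the abelian data to compress to this single generator, reducing the situation essentially to case (i) after a finite algebraic adjustment.

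The main obstacle I anticipate is the descent from $k^0$ to $k$ in case (ii): producing a single $y \in E$ whose logarithmic derivative generates $M$ over $k$ modulo a finite algebraic extension, rather than merely over $k^0$. The $k$-irreducibility hypothesis (as opposed to $k^0$-irreducibility) is essential here, since it rules out the independent $\mathrm{G}_a$- or $\mathrm{G}_m$-factors in the abelian Galois group that would otherwise produce intermediate transcendence-degree subfields, and thereby collapses the abelian structure into a single $\mathrm{G}_a$- or $\mathrm{G}_m$-torsor.
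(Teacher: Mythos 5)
Your argument is circular. The ``Riccati-or-abelian dichotomy'' advertised in the abstract is not an independent input: it is the conjunction of Theorem \ref{intromain-SN}(a) (which, for a strongly normal $E$, places a $k$-irreducible $K$ either inside a Picard--Vessiot subextension of $E$ or inside an abelian extension of $k^0$) with Theorem \ref{intromain} itself, which is what supplies the Riccati generator in the Picard--Vessiot branch via the correspondence $\mathcal L(\zeta)=0\iff\mathcal R_{\mathcal L}(\zeta'/\zeta,\dots)=0$. Invoking that dichotomy to produce the field $M=k\langle w\rangle$ of your case (i) therefore assumes exactly the statement to be proved. If you strip the dichotomy down to what the paper proves independently of Theorem \ref{intromain} --- namely Theorem \ref{intromain-SN} --- it yields nothing here: with $E$ already Picard--Vessiot the abelian branch is vacuous (there is no nonconstant morphism from a connected linear algebraic group onto an abelian variety), and the other branch merely returns $K\subseteq E$. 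The real content of Theorem \ref{intromain} is extracted from the structure of $\mathscr G(E|k)$ by a three-way case analysis on whether $K\cap E^{\mathscr U}=k$, $K\subseteq E^{\mathscr U}$ with $K\cap E^{\mathscr R}=k$, or $K\subseteq E^{\mathscr R}$, resting on Lemmas \ref{unipotent-case}, \ref{torus-case}, \ref{semisimple-case} and Propositions \ref{algebraicsolution-irreduciblefield}--\ref{reductiontosimplealgebraicgroup}; the semisimple case in particular needs maximal connected subgroups, parabolic flags, Popov's rank bounds and symmetric powers. None of that is replaceable by the reduction you propose.

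Two further problems would remain even if the dichotomy were available. First, you linearize only the first-order Riccati $w'=aw^2+bw+c$, i.e.\ the case $\mathrm{ord}(\mathcal L)=2$; the theorem concerns Riccati equations of arbitrary order $n-1$ attached to order-$n$ operators, and your projective-lifting argument does not address $n\ge 3$. Second, in case (ii) you treat $\mathscr G(k^0M|k^0)$ as a commutative \emph{linear} group isomorphic to $\mathrm G_a^p\times\mathrm G_m^q$; in this paper an abelian extension is one whose Galois group is an abelian \emph{variety}, which is projective and admits no such decomposition and no nontrivial one-dimensional linear quotients. (As noted above, that branch cannot occur inside a Picard--Vessiot extension in any case, which is how the paper disposes of it in the proof of Theorem \ref{iter-SN-Subfields} --- but your handling of it is not correct as written.)
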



\begin{theorem}\label{intromain-SN} Let  $k\subset K\subseteq E$ be differential fields,  $E$ be a strongly normal extension of $k$ and $k^0$ be the algebraic closure of $k$ in $E.$ 
	\begin{enumerate}[(a)]
	\item \label{intromain-irred} If $K$ is  $k-$irreducible then either  $k^0K$ is  an abelian extension of  $k^0$ or $k\subseteq K\subseteq L\subseteq E,$ where $L$ is a Picard-Vessiot extension of $k.$\\
    
	\item \label{intromain-uni} If the field $K$ can be embedded in a purely transcendental extension field of $k,$ then $k\subseteq K\subseteq L\subseteq E,$ where $L$ is a Picard-Vessiot extension of $k.$\\
	\end{enumerate}
	\end{theorem}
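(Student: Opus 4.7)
The approach is to translate everything to Kolchin's differential Galois correspondence. Set $G:=\mathrm{Gal}(E/k)$, an algebraic group over $C$ with identity component $G^0$, so $k^0=E^{G^0}$, and $H:=\mathrm{Gal}(E/K)$. By Chevalley's structure theorem in characteristic zero, $G^0$ has a largest closed connected normal affine subgroup $G^0_{\mathrm{aff}}$ (with abelian variety quotient $A:=G^0/G^0_{\mathrm{aff}}$) and a smallest closed connected normal subgroup $G^0_{\mathrm{ant}}$ with $G^0/G^0_{\mathrm{ant}}$ affine (the anti-affine radical); both are characteristic in $G^0$, hence normal in $G$, with $G^0_{\mathrm{ant}}$ central in $G^0$, $G^0=G^0_{\mathrm{aff}}\cdot G^0_{\mathrm{ant}}$, and $T:=G^0_{\mathrm{aff}}\cap G^0_{\mathrm{ant}}$ a central torus. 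Crucially, $L_{\mathrm{PV}}:=E^{G^0_{\mathrm{ant}}}$ is the largest Picard-Vessiot subextension of $E/k$, since $G/G^0_{\mathrm{ant}}$ is linear algebraic.

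For (a), I will exploit the reformulation of $k$-irreducibility: for every closed $H\leq H'\leq G$, either $H'\supseteq G^0$ or $(H')^0=H^0$. Applied to $H'=H\cdot N$ with $N$ a closed connected $G$-normal subgroup of $G^0$, the identity $(HN)^0=H^0\cdot N$ yields the clean dichotomy: either $N\subseteq H$ or $HN\supseteq G^0$. Setting $N=G^0_{\mathrm{ant}}$, the first horn is the second alternative of (a) ($K\subseteq L_{\mathrm{PV}}$); in the second horn I reapply the dichotomy with $N=G^0_{\mathrm{aff}}$. If $G^0_{\mathrm{aff}}\subseteq H$, then $H_0:=H\cap G^0$ is the preimage in $G^0$ of a subgroup of the abelian variety $A$, which forces $H_0\trianglelefteq G^0$ with abelian quotient, so $k^0K/k^0$ is abelian. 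Otherwise $H_0$ surjects onto both $G^0/G^0_{\mathrm{ant}}$ and $G^0/G^0_{\mathrm{aff}}=A$; passing modulo $T$ gives $G^0/T\cong B\times A$ (with $B$ connected affine) and the image of $H_0$ surjects onto each factor. Invoking the rigidity $\mathrm{Hom}(B,A)=\mathrm{Hom}(A,B)=0$, a quick dimension count (a proper closed subgroup of $B\times A$ surjecting onto both factors would, after quotienting by the two projection-kernels, be simultaneously a finite cover of an affine group and of an abelian variety, hence both affine and proper, hence zero-dimensional) forces $H_0\cdot T=G^0$. Centrality of $T$ then makes $H_0\trianglelefteq G^0$ with $G^0/H_0\cong T/(T\cap H_0)$ a quotient torus, hence abelian.

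For (b) I aim to prove $G^0_{\mathrm{ant}}\subseteq H$ directly, which yields $K\subseteq L_{\mathrm{PV}}$. The extension $KL_{\mathrm{PV}}/L_{\mathrm{PV}}$ inside $E$ is strongly normal with connected commutative Galois group $\Gamma:=G^0_{\mathrm{ant}}/(H\cap G^0_{\mathrm{ant}})$ (commutativity of $G^0_{\mathrm{ant}}$ makes every subgroup normal). Extending the given embedding $K\hookrightarrow k(x_1,\dots,x_n)$ by scalars, I realise some compositum of $K$ and $L_{\mathrm{PV}}$ inside $L_{\mathrm{PV}}(x_1,\dots,x_n)$, hence unirational over $L_{\mathrm{PV}}$; since all composita of $K$ and $L_{\mathrm{PV}}$ over $k$ are $\mathrm{Gal}(L_{\mathrm{PV}}/k)$-conjugate and unirationality is preserved under this twisting, $KL_{\mathrm{PV}}$ inside $E$ is also unirational over $L_{\mathrm{PV}}$. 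A nontrivial abelian variety quotient of $\Gamma$ would force $KL_{\mathrm{PV}}$ to contain the function field of a torsor of a positive-dimensional abelian variety over $L_{\mathrm{PV}}$, contradicting unirationality (nonzero Albanese versus trivial Albanese of unirational varieties in characteristic zero). Hence $\Gamma$ is a connected linear commutative algebraic group. But $\Gamma$ is a quotient of the anti-affine $G^0_{\mathrm{ant}}$, and any homomorphism from an anti-affine group to an affine group is trivial; so $\Gamma=\{e\}$ and $G^0_{\mathrm{ant}}\subseteq H$, completing (b).

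The principal obstacle is the last case of (a), where $H_0$ surjects onto both the affine and abelian-variety quotients of $G^0$; handling it hinges on the reduction modulo the central torus $T$ and on the rigidity between affine and abelian-variety connected groups, which together force $H_0$ to be normal with torus quotient. For (b), the care point is the clean transfer of field-theoretic unirationality from $K/k$ to $KL_{\mathrm{PV}}/L_{\mathrm{PV}}$ inside $E$, passing through the Galois-conjugacy of composita and then the anti-affine/affine dichotomy.
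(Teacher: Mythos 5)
Your group-theoretic reduction in part (a) --- applying the irreducibility dichotomy ``either $N\subseteq H$ or $HN\supseteq \mathscr G^0$'' to the affine radical and the anti-affine radical of $\mathscr G^0$ --- is a genuinely different and largely sound route from the paper's, which instead computes the Albanese variety of the homogeneous space $\mathscr G/\mathscr H$ via Ishida's results. But your final case of (a) does not close as written: from $H_0T=\mathscr G^0$ you conclude that $\mathscr G^0/H_0\cong T/(T\cap H_0)$ is ``a quotient torus, hence abelian,'' whereas in this theorem (and throughout Kolchin's theory and this paper) an \emph{abelian extension} means one whose Galois group is an \emph{abelian variety}; an affine quotient would make $k^0K$ a Picard--Vessiot extension of $k^0$, which is neither alternative of the statement as you have set things up. (Also, $T=\mathscr G^0_{\mathrm{aff}}\cap\mathscr G^0_{\mathrm{ant}}$ need not be a torus in characteristic zero: anti-affine extensions of abelian varieties by $\mathrm G_a$ exist.) The case is in fact degenerate: since $H_0\mathscr G^0_{\mathrm{ant}}=\mathscr G^0$, the quotient $\mathscr G^0/H_0$ is a quotient of the anti-affine group $\mathscr G^0_{\mathrm{ant}}$ and hence anti-affine, while your computation shows it is affine; therefore it is trivial, $H\supseteq\mathscr G^0$, and $K\subseteq k^0\subseteq E^{\mathscr G^0_{\mathrm{ant}}}$, landing in the Picard--Vessiot alternative. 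With that one-line repair, (a) goes through.

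Part (b) has a more serious gap. Everything hinges on $KL_{\mathrm{PV}}$ --- the compositum \emph{inside} $E$ --- being unirational over $L_{\mathrm{PV}}$, and your justification (realize \emph{some} compositum inside $L_{\mathrm{PV}}(x_1,\dots,x_n)$, then assert that all composita of $K$ and $L_{\mathrm{PV}}$ over $k$ are $\mathrm{Gal}(L_{\mathrm{PV}}/k)$-conjugate) fails. Conjugacy of composita is a fact about normal \emph{algebraic} extensions; for transcendental extensions the composita correspond to primes of $K\otimes_k L_{\mathrm{PV}}$ and are in general pairwise non-isomorphic over $L_{\mathrm{PV}}$ (already $k(t)$ and $k(s)$ admit both $k(t,s)$ and $k(t)$ as composita over $k$). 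Your scalar-extension argument produces the compositum attached to the zero ideal, namely $\mathrm{Frac}(K\otimes_k L_{\mathrm{PV}})$, which does embed in $L_{\mathrm{PV}}(x_1,\dots,x_n)$; but the compositum inside $E$ is attached to the prime $\ker(K\otimes_k L_{\mathrm{PV}}\to E)$, which is zero only if $K$ and $L_{\mathrm{PV}}$ are linearly disjoint over $k$ --- something you have not shown and which fails whenever $K\cap L_{\mathrm{PV}}\supsetneq k$. A quotient of a subring of a rational function field can be the coordinate ring of an abelian variety, so no unirationality of $KL_{\mathrm{PV}}$ over $L_{\mathrm{PV}}$ can be inferred. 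The paper sidesteps this by working with the homogeneous space itself: $K\cong\mathrm{Frac}\bigl(k\otimes_C C(\mathscr G/\mathscr H)\bigr)$, so the hypothesis makes the variety $k\times_C(\mathscr G/\mathscr H)$ unirational over $k$; its Albanese is then trivial, triviality descends to $\mathrm{Alb}(\mathscr G/\mathscr H)$ by base-change invariance of the Albanese, and Matsumura's theorem forces the relevant quotient of $\mathscr G$ to be linear. You would need this identification (or an actual proof of linear disjointness) to make your $\Gamma$-argument run.
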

		
A differential field extension $E$ of $k$ is said to an \emph{iterated strongly normal extension} (respectively \emph{iterated Picard-Vessiot extension}) of $k$ if there is a tower of differential fields $$k=E_0\subseteq E_1\subseteq \cdots\subseteq E_{n-1}\subseteq E_n=E$$ such that for each $i=1,2,\dots,n,$ $E_i$ is a strongly normal extension (respectively Picard-Vessiot extension) of $E_{i-1}.$

\begin{theorem}\label{iter-SN-Subfields}\label{subfields-IPV}
Let  $k\subset K\subseteq E$ be differential fields and  $E$ be an iterated strongly normal extension (respectively an iterated Picard-Vessiot extension) of $k.$ Then, $K$ can be resolved into a tower of differential fields $$k=K_0\subset K_1\subset K_2\subset \cdots \subset K_{m-1}\subset K_m=K$$ such that for each $i=1,\dots, m,$ $K_i$ is a differential subfield of a strongly normal extension \footnote{cf. \cite[Theorem]{NK94} and \cite[Theorem 23]{HU90}.} (respectively a Picard-Vessiot extension) of $K_{i-1}.$ 
\end{theorem}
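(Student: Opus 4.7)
The plan is to induct on the length $n$ of the iterated strongly normal tower $k=E_0\subset E_1\subset \cdots\subset E_n=E$. The base case $n=1$ is immediate: $E/k$ is itself strongly normal, so the trivial tower with $m=1$ and $K_1=K$ works.

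For the inductive step, set $K^{\ast}:=K\cap E_{n-1}$, a differential subfield of the iterated strongly normal extension $E_{n-1}/k$ of length $n-1$. The inductive hypothesis applied to $K^{\ast}\subseteq E_{n-1}$ yields a tower $k=K_0\subset K_1\subset \cdots \subset K_{m-1}=K^{\ast}$ with each $K_i$ a differential subfield of a strongly normal extension of $K_{i-1}$. It then remains to extend this tower by one or more further steps from $K^{\ast}$ up to $K$, each step being a differential subfield of a strongly normal extension of the previous.

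For this final segment, my approach is as follows. The compositum $K\cdot E_{n-1}\subseteq E_n$ is intermediate in the strongly normal extension $E_n/E_{n-1}$, and hence is the fixed field of some closed subgroup $H$ of $G:=\operatorname{Gal}(E_n/E_{n-1})$. Taking the core of $H$ in $G$ (the largest normal subgroup of $G$ contained in $H$, which is closed as an intersection of conjugates of $H$) yields a closed normal subgroup $N\triangleleft G$; the fixed field $L:=E_n^{N}$ is then a strongly normal intermediate extension of $E_n/E_{n-1}$ containing $K\cdot E_{n-1}$, and in particular $K$. The main task is now to descend the strongly normal extension $L/E_{n-1}$ to a strongly normal extension of $K^{\ast}$ that still contains $K$. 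Because $K\cap E_{n-1}=K^{\ast}$, the fields $K$ and $E_{n-1}$ are essentially linearly disjoint over $K^{\ast}$ inside $E_n$, and the algebraic-group structure of the quotient $G/N$ can be transported via Galois descent to produce the desired $K^{\ast}$-form. Making this descent precise (essentially a Galois descent or Weil restriction argument) is where I expect the main technical obstacle to lie.

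The Picard-Vessiot case is handled by the identical induction with ``strongly normal'' replaced by ``Picard-Vessiot'' throughout; the Galois groups are then linear algebraic groups, and the descent in the final paragraph yields a Picard-Vessiot extension of $K^{\ast}$ containing $K$. In both cases, appending $K_m:=K$ (or a finite further refinement of the segment from $K^{\ast}$ to $K$) to the inductively constructed tower produces the desired decomposition.
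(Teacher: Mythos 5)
There is a genuine gap, and it sits exactly where you flag it: the ``descent'' from the strongly normal extension $L/E_{n-1}$ to a strongly normal extension of $K^{\ast}$ containing $K$ is not a technical detail to be filled in later --- it is the entire content of the theorem, and the mechanism you propose for it does not exist. Galois descent needs a group acting on $E_{n-1}$ with fixed field $K^{\ast}$, but $E_{n-1}/K^{\ast}$ is not a strongly normal (or any kind of normal) extension in general, so there is no group to descend along. Likewise, $K\cap E_{n-1}=K^{\ast}$ does not give linear disjointness or even freeness of $K$ and $E_{n-1}$ over $K^{\ast}$, so the structure of $KE_{n-1}$ over $E_{n-1}$ (which the Galois correspondence in $E_n/E_{n-1}$ does control) tells you little about $K$ over $K^{\ast}$. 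Worse, your induction on the length of the tower gains nothing in the critical case $K\cap E_{n-1}=k$: there your plan requires producing a single strongly normal extension of $k$ containing $K$, which is strictly stronger than the theorem --- the conclusion is stated as a tower precisely because subfields of iterated strongly normal extensions need not embed in a single strongly normal extension of the base.

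The paper's proof takes a different route that avoids this obstruction. It inducts on $\mathrm{tr.deg}(K|k)$ rather than on the length of the tower, reducing to the case where $K$ is $k$-irreducible, and then invokes the Nishioka--Umemura theory of differential fields depending rationally on arbitrary constants: since $E$ is an iterated strongly normal extension, $K$ is a Painlev\'e--Umemura extension of (the algebraic closure of $k$ in) $K$, and \cite[Theorem]{NK94} together with \cite[Theorem 23]{HU90} produce a nontrivial intermediate field $M$ with $k\subset M\subseteq K$ that embeds in a strongly normal extension of $k$; irreducibility then forces $K$ to be algebraic over $M$. (In the Picard--Vessiot case a separate argument, using that a connected linear algebraic group admits no nonconstant morphism to an abelian variety, rules out the abelian alternative from Theorem \ref{intromain-SN}.) These external results are the real engine of the proof, and nothing in your outline substitutes for them. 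If you want to salvage your approach, you would need to prove the key existence statement --- that any nonalgebraic differential subfield of an iterated strongly normal extension contains a nontrivial subfield embeddable in a strongly normal extension of the base --- and that is exactly what the cited theorems of Nishioka and Umemura provide.
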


We shall now make a few remarks on our results. Let $\mathcal L(y)=0$ be linear homogeneous differential equation of order $n$ over $k$ and $\mathcal R_\mathcal L$ be the Riccati differential equation\footnote{If $\mathcal L=\partial^2+a_1\partial+a_0$ then $\mathcal R_\mathcal L(w,w')=w'+w^2+a_1w+a_0$ and if $\mathcal L=\partial^3+a_2\partial^2+a_1\partial+a_0$ then $\mathcal R_\mathcal L(w,w',w'')=w''+3ww'+a_2(w'+w^2)+a_1w+a_0.$} associated with $\mathcal L.$ For a nonzero element $\zeta,$  $\mathcal L(\zeta)=0$ if and only if $w=\zeta'/\zeta$ is a solution of Riccati differential equation $\mathcal R_\mathcal L\left(w, w',\dots,w^{(n-1)}\right)=0$ of order $n-1.$  Since differential fields of transcendence degree $1$ are irreducible,  Theorems \ref{intromain}  and \ref{intromain-SN} directly generalizes \cite[Theorem 1.1]{KRS24} to $k-$irreducible differential fields.

 A finitely generated differential field extension $K$ of $k$ is said to have \emph{no movable singularity} if $k$ is algebraically closed in $K$ and there exists a nonsingular projective variety $(\Gamma, \mathscr O_\Gamma)$ over $k$ such that its function field $k(\Gamma)$ is $k-$isomorphic to $K$ and that $\mathscr O_\Gamma(U)$ is a differential subring of $K$ for every open set $U\subset \Gamma.$  It is shown in \cite[Theorem 2.1]{Bui86} that a differential field extension $K$ of an algebraically closed field $k$ with $C_K=C$ has no movable singularity over $k$ if and only if $K$ can be embedded in a  strongly normal extension $E$ of $k$ such that $E$ is a Picard-Vessiot extension of $K.$ Thus, in particular,  differential subfields of a Picard-Vessiot extension have no movable singularities, and strongly normal extensions have no movable singularities. It is also shown in \cite[page 45]{Bui86} that if $K$ is a differential field extension of an algebraically closed differential field $k,$ $C_K=C$ and $\mathrm{tr.deg}(K|k)=1$ then $K$ is generated as a field over $k$ by either a solution of a Riccati differential equation or a solution of a Weierstrass differential equation. From our Theorems \ref{intromain}, \ref{intromain-SN} and \cite[Theorem 2.1]{Bui86}, we can readily deduce the following: \begin{enumerate}[(i)]\item  If $k$ is an algebraically closed field, $K$ is  $k-$irreducible, $C_K=C$ and that $K$ has no movable singularity then either $K$ is an algebraic extension of  $k\langle w\rangle,$ where $w$ is a solution of a Riccati differential equation over $k$ or $K$ is an abelian extension of $k$.\\  \item  If $K$ is a differential subfield of a strongly normal extension of an algebraically closed differential field $k$ and that $K$ is also a unirational field then $K$ has no movable singularity.  \end{enumerate}

The proof of Theorem \ref{iter-SN-Subfields} requires the works of Umemura and Nishioka (\cite{HU90} \& \cite{NK94}) on differential fields depending rationally on arbitrary constants and our Theorem \ref{intromain-SN}(\ref{intromain-irred}). Observe that if $K$ is a $k-$irreducible differential subfield of an iterated strongly normal extension of $k$, then it contains a $k-$irreducible subfield $M$ that can be embedded in a strongly normal extension of $k.$ Now from Theorem \ref{intromain-SN} and Theorem \ref{intromain}, we can further describe the structure of $M.$  As we will see, this observation allows us to study solution fields of nonlinear differential equations.  
 
\subsection{Applications to nonlinear differential equations}
An \emph{algebraic differential equation} over $k$ of order $n$ is a differential equation of the form \begin{equation}\label{aode} f(t,t',\dots,t^{(n)})=0,\end{equation} where $f$ is an irreducible polynomial in $n+1$ variables over $k$ and $\partial f/\partial t^{(n)}\neq 0.$ If the coefficients of $f$ are in $C$ then the differential equation (\ref{aode}) is said to be \emph{autonomous}. Let $\mathfrak U$ be a universal extension of $k$ (\cite{Kol73}).  A \emph{solution} of (\ref{aode}) is an element $y\in \mathfrak U$ such that $f(y,y',\dots,y^{(n)})=0$ and that $k\langle y\rangle$ has $C$ as its field of constants (cf.  \cite{NNvdPT15} \& \cite{NMvdPT22}).

In \cite[Proposition]{Ros74}, Rosenlicht proves the following: Let $k$ be a differential field with constants $C,$ and $k(x)$ be a pure transcendental extension field of $k.$ For a nonzero rational function $f(x)\in C(x),$  extend the derivation on  $k$ to $k(x)$ by defining $x'=f(x).$ Suppose that $1/f(x)$ is not of the form \begin{equation*} \frac{\partial u}{\partial x} \ \ \text{or}\ \frac{1}{cu}\frac{\partial u}{\partial x}\end{equation*} for any $u(x)\in  C(x)$ and nonzero constant $c\in C$. Then, every constant of $k(x)$ is in $C.$

More recently, Eagles and Jimenez \cite[Corollary]{eagles-Jimenez} extended this result of Rosenlicht to systems of autonomous first order differential equations using model-theoretic methods and asked for an extension of their result in the context of algebraic differential equations of arbitrary order (see \cite[Section 5]{eagles-Jimenez}).  Using \cite{NK89a}, \cite{HU90}, and our main results (Theorems \ref{intromain} and \ref{intromain-SN}), we establish an extension of the Rosenlicht’s result (see Proposition \ref{strong-ros-extn} and Remarks \ref{ros-extn}, \ref{EJ-result}) and thus answering the question of Eagles and Jiménez.

Let $y$ be a solution of (\ref{aode}) and let $\mathfrak S_{k,f,y}\subset \mathfrak U$ be the set of all solutions of (\ref{aode}) satisfying the following conditions \begin{enumerate}[(i)] \item For any $y_1\in \mathfrak S_{k,f,y},$  there is a differential $k-$isomorphism between the differential fields $k\langle y\rangle$ and $k\langle y_1\rangle$ that maps $y$ to $y_1.$ \item  For any  $y_1, y_2,\dots, y_m\in \mathfrak S_{k,f,y},$ the field of constants of $k\langle y_1,\dots, y_m\rangle$ is $C.$ \item   For any $y_1,\dots,y_m\in \mathfrak S_{k,f,y},$  $\mathrm{tr.deg}(k\langle y_1,\dots, y_m\rangle|k)=$ $\sum^m_{i=1}\mathrm{tr.deg}(k\langle y_i\rangle|k).$ \end{enumerate}

We prove the following theorem that concerns with the (differential) algebraic dependence of solutions of algebraic ordinary differential equations.

\begin{theorem}\label{algebraicdependence-aode}   Let $k$ be an algebraically closed differential field and $y$ be a nonalgebraic solution of (\ref{aode}).
	\item \begin{enumerate}[(a)] 
		\item \label{nongeneral-SN} $\mathfrak S_{k,f,y}$ is a finite set if and only if 
		there is $k-$irreducible differential subfield $K$ of $k\langle y\rangle$ that can be embedded in a strongly normal extension of $k$ and that  either  a) $K=k\langle w\rangle,$ where $w$ is a nonalgebraic solution of some Riccati differential equation over $k$ or b) $K$ is an abelian extension of $k,$ in which case, $\mathfrak S_{k,f,y}=\{y\}.$\\
		\item\label{auto-nongeneral-SN}  Suppose that $k=C.$ Then, $\mathfrak S_{C,f,y}=\{y\}$  if and only if 
		there is an intermediate differential field $C\subset M\subseteq C\langle y\rangle$ such that either a) $M=C(x),$ where $x'=1$ or $x'=cx$ for some nonzero $c\in C$ or b) $M$ is an abelian extension of $C.$ \\
		\item\label{cardn+2-order4}  Suppose that  $n\leq 4.$  Then, $\mathfrak S_{k,f,y}$ is a finite set if and only if it has at most $n+2$ elements. 
	\end{enumerate}
    \end{theorem}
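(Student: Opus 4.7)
The plan is to deduce Theorem \ref{algebraicdependence-aode} from the structural Theorems \ref{intromain}, \ref{intromain-SN}, and \ref{iter-SN-Subfields}, combined with the theory of differential fields depending rationally on arbitrary constants (\cite{NK89a}, \cite{HU90}) and Buium's characterization in \cite[Theorem 2.1]{Bui86} of fields having no movable singularities.

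For the reverse direction of (a), given the subfield $K$, I would bound $|\mathfrak{S}_{k,f,y}|$ by tracking images of $K$ under the $k$-isomorphisms $y\mapsto y_i$. When $K$ is abelian over $k$, $K$ is a Picard-Vessiot extension of $k$ with commutative Galois group and is uniquely determined inside the universal extension $\mathfrak{U}$; the image $K_1\subseteq k\langle y_1\rangle$ must therefore coincide with $K$, and the independence conditions (ii)--(iii) on $\mathfrak{S}$ force $y_1=y$, giving $\mathfrak{S}_{k,f,y}=\{y\}$. When $K=k\langle w\rangle$ for a nonalgebraic Riccati solution $w$, each $y_i$ contributes a Riccati solution $w_i\in k\langle y_i\rangle$ of the same equation; Theorem \ref{intromain} places all the $w_i$ inside a common Picard-Vessiot extension whose Galois group embeds in $\mathrm{PGL}_2(C)$, and additivity (iii) bounds the number of pairwise independent $w_i$ by the dimension of that group, so $\mathfrak{S}_{k,f,y}$ is finite.

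For the forward direction of (a), suppose $\mathfrak{S}_{k,f,y}$ is finite. The strategy is to use \cite{NK89a}, \cite{HU90}, and \cite[Theorem 2.1]{Bui86} to show that the boundedness of the family of generic $y_i$ forces $k\langle y\rangle$ to embed in an iterated strongly normal extension of $k$. Theorem \ref{iter-SN-Subfields} then resolves $k\langle y\rangle$ into a tower of $k$-irreducible steps each sitting inside a strongly normal extension of its base, and Theorem \ref{intromain-SN}(a) classifies the first nontrivial step as either abelian over $k^0$ or as a subfield of a Picard-Vessiot extension; Theorem \ref{intromain} identifies the latter case, after a finite algebraic extension, as generated by a Riccati solution. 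The Weierstrass alternative in the transcendence-degree-one classification is excluded because the elliptic translates $\wp(x+a_i)$ would yield an infinite subfamily inside $\mathfrak{S}_{k,f,y}$, contradicting finiteness. Part (b) is then the specialization $k=C$: strongly normal extensions of $C$ are commutative Picard-Vessiot, so every nonalgebraic Riccati over $C$ reduces to $x'=1$ or $x'=cx$, and the stronger condition $|\mathfrak{S}_{C,f,y}|=1$ isolates precisely the structure stated.

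For part (c), combining the dichotomy in (a) with quantitative bounds on Galois-group dimensions yields $|\mathfrak{S}_{k,f,y}|\le n+2$ for $n\le 4$: the abelian case contributes a singleton, while the Riccati case uses $\dim\mathrm{PGL}_2(C)=3$ together with additivity (iii) and the order bound $n\le 4$ to cap the number of generic $y_i$ by comparison with the transcendence degree of the ambient Picard-Vessiot closure. The main obstacle is the forward direction of (a): upgrading the purely set-theoretic finiteness of $\mathfrak{S}_{k,f,y}$ to an iterated strongly normal embedding of $k\langle y\rangle$, and then suppressing the Weierstrass alternative at the first nontrivial step, is the technical heart of the theorem and requires a careful combination of the Nishioka--Umemura machinery with Buium's result and our structural theorems.
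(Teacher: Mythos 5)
Your proposal has the right skeleton for the abelian case of the reverse direction of (a) (the paper's Proposition \ref{algindp-auto} makes exactly your point: a strongly normal subfield of $k\langle y\rangle$ is carried into every $k\langle y_i\rangle$ by the isomorphisms, violating additivity of transcendence degree unless $y_i=y$), and part (b) is indeed just (a) plus Remark \ref{remarksonintromainSN}. But the forward direction of (a) is where your plan goes astray. You propose to show that finiteness of $\mathfrak S_{k,f,y}$ forces $k\langle y\rangle$ to embed in an \emph{iterated strongly normal extension} of $k$; that is far stronger than what is true or needed, and nothing in the cited machinery delivers it. The paper's actual mechanism is a tensor-product/constants argument: if $|\mathfrak S_{k,f,y}|=m$, form $E=\mathrm{Frac}\bigl(M\otimes k\langle y\rangle\bigr)$ with $M=\mathrm{Frac}\bigl(\bigotimes^{m}k\langle y\rangle\bigr)$; since $E$ embeds in $\mathfrak U$ but cannot produce an $(m+1)$-st independent solution, new constants appear, i.e.\ $\mathrm{tr.deg}(C_E|C_M)\geq 1$, and then Proposition \ref{strong-ros-extn} (the Nishioka--Umemura rank argument, the paper's extension of Rosenlicht) produces a subfield of $k\langle y\rangle$ embeddable in a strongly normal extension, after which Theorems \ref{intromain} and \ref{intromain-SN} give the Riccati/abelian dichotomy. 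Your ``Weierstrass exclusion'' via translates $\wp(x+a_i)$ is also a misreading: the abelian (Weierstrass-type) alternative is not excluded, it is case (b) of the statement, and there $\mathfrak S_{k,f,y}=\{y\}$ precisely because translates of $\wp$ are algebraically dependent, so they violate condition (iii) defining $\mathfrak S$.

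Two quantitative steps are also wrong. In the Riccati case of the reverse direction, the Riccati equation is the order-$(n-1)$ equation associated with a monic $\mathcal L\in k[\partial]$ of arbitrary order, so the ambient group is a subgroup of $\mathrm{GL}_n(C)$, not $\mathrm{PGL}_2(C)$; the paper bounds $m$ by $\mathrm{tr.deg}(L|k)$ where $L$ is a Picard--Vessiot extension for $\mathcal L$ containing all the $w_i=\sigma_i(w)$. Consequently your sketch of part (c) cannot produce the bound $n+2$: the paper needs the case analysis of Theorem \ref{intromain} (unipotent radical, torus, semisimple/parabolic), the classification of semisimple groups with affine homogeneous spaces of dimension $\leq 4$ (Popov, Kagawa--Watabe), and the rank inequality $\mathrm{tr.deg}(L|k)\leq r(r+2)\leq l(l+2)$ of Remark \ref{productofclassicalgroups}, from which $m\leq l+2\leq n+2$. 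A $\dim\mathrm{PGL}_2=3$ count does not reach this.
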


The above theorem can also be stated and proved for a system of first order differential equations.

From Theorem \ref{algebraicdependence-aode} (\ref{nongeneral-SN}), we obtain that if $y$ is a nonalgebraic solution of a first order differential equation $f(t,t')=0$ then $\mathfrak S_{k,f,y}$ is a finite set if and only if $y$ belongs to an algebraic extension of a strongly normal extension of $k$ (cf. \cite[Theorem 1.1]{KRS24}). It is known that if $y$ is a nonalgebraic solution of the differential equation \begin{equation}\label{extn-Rosexample}g(t,t'):= t'+a_nt^n+\dots+a_3t^3+a_2t^2=0,\end{equation} where $n\geq 3,$ each $a_i\in k$ and there are no elements $z\in k$ such that $z'=a_2$ or $z'=a_3$ then $y$ does not belong to any iterated strongly normal extension of $k$ (\cite[Example 7.4]{KRS24}). Therefore, $\mathfrak S_{k,g,y}$ must be infinite. Observe that the differential equation $h(t,t'):=t'-t^3+t^2$  is a particular case of Equation (\ref{extn-Rosexample}) and thus $\mathfrak S_{k,h,y}$ is an infinite set for any nonconstant solution $y$  of $t'=t^3-t^2,$ as noted in the works \cite{Ros74, Kol-74, NMvdPT22}.

A first order differential equation \begin{equation}\label{fode}g(t,t')=0.\end{equation} is said to be \emph{general type} (see \cite[Page 580]{KRS24}) if it has a nonalgebraic solution $y$ such that $k\langle y\rangle$ contains no nontrivial differential subfield $K$ that can be embedded in a strongly normal extension of $k$. Therefore, from Theorem \ref{algebraicdependence-aode}(\ref{nongeneral-SN}), we obtain that general type first order algebraic differential equations are precisely those equations $f(t,t')=0$ having a nonalgebraic solution $y$ such that  $\mathfrak S_{k,f,y}$ is an infinite set. This confirms the expectation of Noordman et al. (\cite[Page 1667]{NMvdPT22}) on algebraic dependency of solutions of a given general type autonomous first order differential equation. 

As an immediate consequence of Theorem \ref{algebraicdependence-aode} (\ref{auto-nongeneral-SN}) \& (\ref{cardn+2-order4}), we 
obtain that a first order differential equation over $k$ (respectively, over  $C$) is not of general type if and only if it has at most three (respectively, at most one) algebraically independent solutions in any given differential field extension of $k$ (respectively, $C$) having $C$ as its field of constants. This indeed  proves the conjecture made by the authors on algebraic dependency of solutions of nongeneral type equations (\cite[Conjecture 1.2]{KRS24}).

Our proof of the Theorem \ref{algebraicdependence-aode}(\ref{cardn+2-order4}) uses results from \cite{CX18, Pop73, Pop75} on classification of linear algebraic groups having affine homogeneous spaces of dimension $\leq 4.$ For this reason, we need to impose the condition that $n\leq 4.$ At this point of time, we do not know if Theorem \ref{algebraicdependence-aode}(\ref{cardn+2-order4}) generalizes to differential equations of all order.

A solution $y$ of (\ref{aode}) is said to be \emph{generic} if  $\mathrm{tr.deg}(k\langle y\rangle | k)=n,$ the order of the equation. We prove that every nonconstant solution of the Poizat differential equation $f(t,t',t''):=t''-t'h(t),$ where $h$ is a nonzero rational function over $C,$ is generic if and only if $h(t)\neq \partial g(t)/\partial t$ for any rational function $g$ over $C.$ We then use a calculation from \cite[Proposition 4.1]{eagles-Jimenez} and conclude that $\mathfrak S_{C,h,y}$ is an infinite set for any nonconstant solution $y$ of the Poizat equation $t''=t'h(t)$ with $h(t)\neq \partial g(t)/\partial t$ for any $g(t)\in C(t).$ We also provide a necessary and sufficient condition for the Lotka-Volterra (L-V) system to have a generic solution $y$ such that $\mathfrak S_{C,L-V,y}$ is a finite set.  

\subsection{Applications to $d-$solvability of linear differential equations}   Building on the goal initiated by Singer in \cite{Sin85} and \cite{Sin88}, which is to solve a linear homogeneous differential equation $\mathcal{L}(t)=0$ in terms of linear homogeneous differential equations of order $<\mathrm{ord}(\mathcal{L})$, K.A. Nguyen in \cite{Ngu09} develops the notion of $d-$solvable Picard-Vessiot extensions and studies algebraic relations between solutions of a linear differential equation. 

We generalize this notion of $d-$solvabilty to arbitrary differential fields as follows. A differential field  extension $E\supset k$ is said to be \emph{$d^+-$solvable}  if there exists a tower of differential fields $$k=K_0 \subseteq\cdots\subseteq K_r$$ such that $E \subseteq K_r$ and for each $i, 1\leq i\leq r,$ either \begin{enumerate}[(i)]
    \item \label{dsolvable-1} $K_{i}$ is a finite algebraic extension of $K_{i-1},$ or
    \item \label{dsolvable-2}$K_{i}$ is a Picard-Vessiot extension of $K_{i-1}$ for $\mathcal L(y)=b,$   where $\mathcal L\in K_{i-1}[\partial]$ is of order $\leq d$ and $b\in K_{i-1}$.
\end{enumerate} 

A $d^+-$solvable field extension $E$ of $k$ is said to be \emph{$d-$solvable} if (\ref{dsolvable-2}) holds with $b=0.$ A solution $y$ of an algebraic differential equation is said to be \emph{$d-$solvable} (respectively \emph{$d^+-$solvable}) if $k\langle y\rangle$ is a $d-$solvable ( respectively $d^+-$solvable) extension of $k.$  Differential field extensions of $k$ that are $1^+-$solvable are precisely differential subfields of liouvillian extension fields of $k.$ The notion $2-$solvability is studied by Singer in (\cite[page 666]{Sin85}), where he uses the term eulerian instead of $2-$solvable. 

 If a nonzero solution $y$ of a linear homogeneous differential equation $\mathcal{L}(t)=0$ over $k$ is also a solution of a first order algebraic differential equation over $k$, then Goldman and Singer independently prove that such a  $y$ must be $1^+-$solvable; furthermore, if $\mathcal{L} $ is also irreducible, then they show that $\mathcal{L}(t)=0$ has a $C-$basis of $1-$solvable solutions (see \cite[Theorem 3, 4]{GL59} \& \cite[Corollary 3, 4]{Sin76}). Using results on algebraic homogeneous spaces of dimension $\leq 4,$ we reprove and generalize this result as follows.
 
\begin{theorem}\label{sol-td-higher}
   Let $\mathcal L \in k[\partial],$ $y$ be a nonzero solution of  $\mathcal L(t)=0$ and $d:=\mathrm{tr.deg}(k\langle y\rangle| k).$   Then, $y$ is $d^+-$solvable for $d=1$ and $d-$ solvable for $2\leq d\leq 4$. Furthermore, if $1\leq d\leq 4$ and $\mathcal L$ is  irreducible then $\mathcal{L}(t)=0$ has a $C-$basis of $d-$solvable solutions.
\end{theorem}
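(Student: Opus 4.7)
The plan is to reduce to the $k$-irreducible case via Theorem \ref{subfields-IPV}, apply Theorem \ref{intromain} step by step to expose the underlying linear structure of each block, and invoke the classification of affine homogeneous spaces of linear algebraic groups of dimension $\leq 4$ from \cite{CX18,Pop73,Pop75} to bound the orders of the operators that arise by $d$.

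Since $\mathcal L(y)=0$, the element $y$ lies in a Picard-Vessiot extension $E$ of $k$. Set $K:=k\langle y\rangle\subseteq E$ and apply Theorem \ref{subfields-IPV} (refining the resulting tower so that each step is irreducible) to obtain
\[
k=K_0\subset K_1\subset\cdots\subset K_m=K,
\]
with each $K_i$ a $K_{i-1}$-irreducible differential subfield of a Picard-Vessiot extension $L_i$ of $K_{i-1}$. Let $d_i:=\mathrm{tr.deg}(K_i|K_{i-1})$, so that $\sum d_i=d\leq 4$. Theorem \ref{intromain} applied to each step yields a monic linear operator $\mathcal M_i\in K_{i-1}[\partial]$ and an element $y_i\in L_i$ with $\mathcal M_i(y_i)=0$ such that $K_i$ is a finite algebraic extension of $K_{i-1}\langle y_i'/y_i\rangle$.

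To bound the order of $\mathcal M_i$, let $G_i$ be the differential Galois group of the Picard-Vessiot closure of $K_{i-1}\langle y_i\rangle$ over $K_{i-1}$ and let $H_i\leq G_i$ correspond to $K_{i-1}\langle y_i'/y_i\rangle$; then $\dim(G_i/H_i)=d_i\leq 4$. Using the dimension-$\leq 4$ classification of affine homogeneous spaces of linear algebraic groups from \cite{CX18,Pop73,Pop75}, one can choose $\mathcal M_i$ and $y_i$ so that $\mathrm{ord}(\mathcal M_i)\leq d_i$ whenever $d_i\geq 2$. When $d_i=1$ the quotient is a form of $\mathrm G_a$ or $\mathrm G_m$; the $\mathrm G_m$ case produces an exponential integral ($1$-solvable), while the $\mathrm G_a$ case produces a logarithmic integral, i.e., an order-$1$ inhomogeneous equation ($1^+$-solvable only). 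Concatenating these blocks yields a $d^+$-solvable tower for $y$ when $d=1$, and a $d$-solvable tower for $2\leq d\leq 4$ (in that range any $d_i=1$ logarithmic block can be absorbed into a homogeneous linear operator of order $\leq d$, since having $d\geq 2$ leaves room to inflate the order by one).

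For the $C$-basis statement, assume $\mathcal L$ is irreducible; then its Galois group $G$ acts irreducibly on the $n$-dimensional $C$-space $V$ of solutions of $\mathcal L$. Consequently the $C$-span of the Galois orbit $\{\sigma(y):\sigma\in G\}$ is a nonzero $G$-invariant subspace of $V$, hence equals $V$. Applying $\sigma\in G$ to the tower constructed for $y$ produces a $d$-solvable (or $d^+$-solvable) tower for $\sigma(y)$, and finite composita of $d$-solvable extensions are again $d$-solvable; so a single $d$-solvable extension of $k$ contains a $C$-basis of $V$. The extra point when $d=1$ is that irreducibility of $\mathcal L$ rules out the $\mathrm G_a$ scenario above (for otherwise $y$ would satisfy an order-$1$ equation over an algebraic extension of $k$, forcing $\mathcal L$ to factor), so the basis obtained is in fact $1$-solvable and not merely $1^+$-solvable. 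The principal obstacle is precisely the order bound for $\mathcal M_i$: translating the geometric structure of $G_i/H_i$ of dimension $\leq 4$ into a linear representation of $G_i$ whose stabilizer controls $H_i$ requires a case-by-case use of \cite{CX18,Pop73,Pop75}, which is also the reason the hypothesis $d\leq 4$ cannot be lifted with the present tools.
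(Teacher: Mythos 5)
Your overall architecture (resolve $k\langle y\rangle$ into irreducible blocks, apply Theorem \ref{intromain} to each, bound the orders of the resulting operators) is the same as the paper's, but there is a genuine gap at the decisive step: the assertion that ``one can choose $\mathcal M_i$ and $y_i$ so that $\mathrm{ord}(\mathcal M_i)\leq d_i$ whenever $d_i\geq 2$'' is not justified and is in fact false for a general $k$-irreducible subfield. The problematic case is the one where, in the notation of Remark \ref{obv-intromain}, the block is the fixed field of a \emph{maximal parabolic} subgroup $\mathscr P$ of a simple group: for $d_i=4$ the group can be (isogenous to) $\mathrm{SL}_5$ with $\mathscr P$ the stabilizer of a line, so $\dim(\mathscr G/\mathscr P)=4$ but the natural representation has dimension $5$, the Picard--Vessiot closure of the block has group $\mathrm{SL}_5$, and one only gets $5$-solvability, i.e.\ $(d+1)$-solvability. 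The classification of low-dimensional affine homogeneous spaces cannot rescue you here because $\mathscr G/\mathscr P$ is projective, not affine. The paper closes exactly this loophole with Lemma \ref{no-pv-element}: the fixed field of a parabolic subgroup satisfies $T(E^{\mathscr P}|k)=k^0$ (any $\mathscr G$-orbit map from the projective variety $\mathscr G/\mathscr P$ into an affine space of solutions is constant), whereas your top block contains the nonzero solution $y$ of $\mathcal L(t)=0$, so $T(K|k)\neq k$ and the parabolic case cannot occur \emph{for that block}. Your proof never uses the hypothesis that $y$ itself satisfies a linear equation over $k$ beyond placing it in a Picard--Vessiot extension, and without that input the sharp bound $d$ (rather than $d+1$) is unobtainable. (For the intermediate blocks with $d_i\leq d-1$ the parabolic case is harmless since $(d_i+1)$-solvable already implies $d$-solvable, which is how the paper handles the reducible case; but you must say this, and you must exclude the parabolic case at the block where $d_i=d$.)

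Two smaller points. First, in the semisimple and torus cases your order-bounding claim is essentially what Lemma \ref{unipotent-case}, Lemma \ref{torus-case} and the affine homogeneous space classification deliver, so that part of the sketch is recoverable. Second, in the $d=1$ part of the basis statement, your parenthetical ``forcing $\mathcal L$ to factor'' is not correct as stated: a first-order relation over an algebraic extension $\tilde k$ of $k$ does not force $\mathcal L$ to be reducible over $k$. The correct route (which the paper takes) is the standard fact that an irreducible operator with a liouvillian solution has a solution $z$ with $z'/z$ algebraic over $k$ (\cite[Proposition 1.45]{vdPS03}), which is $1$-solvable; combined with the observation that the $d$-solvable solutions form a $\mathscr G(E|k)$-stable subspace, irreducibility then yields the full basis.
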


We conclude this section with the following example that shows that the above theorem need not hold for $d=6.$ Suppose that $E$ is a Picard-Vessiot extension of $k$ with $\mathscr G(E|k)$ isomorphic to the exceptional simple algebraic group $\mathrm{G}_2$. By \cite[Table 2]{BHW86}, $\mathscr G(E|k)$ contains a closed subgroup $\mathscr H$ isomorphic to $\mathrm{SL_3}(C)$ that is  maximal  as well as  reductive  of dimension $8.$ Since $\mathscr H$ is reductive, from \cite[Theorem 1.2.2]{And14}, $E^\mathscr H$ is a solution field and since $\mathscr H$ is maximal, we may choose any $z\in E^\mathscr H\setminus k$ that is  a solution of a linear differential equation over $k$ and obtain that $E^\mathscr H=k\langle z\rangle.$ Note that $\mathrm{dim}(\mathscr H)=8$ and therefore $\mathrm{tr. deg}(k\langle z\rangle|k)=6.$  We will show that $z$ is not $6-$solvable. Assume otherwise and consider a tower of differential fields $k=E_0\subset E_1\subset \cdots\subset E_r,$ where $k\langle z\rangle\subseteq E_r$ and for each $i$ $E_i$ is either a finite algebraic extension of $E_{i-1}$ or a Picard-Vessiot extension with $\mathscr G(E_i|E_{i-1})\subseteq \mathrm{GL}_6.$ Then, we also have the tower of differential fields $$k=E_0\subset E_1\subset \cdots\subset E_r\subset EE_r,$$ where $EE_r$ is a Picard-Vessiot extension of $E_r$ having a Galois group isomorphic to a closed subgroup of $\mathscr H$. Since $\mathscr G(E|k\langle z\rangle)= \mathscr H\cong \mathrm{SL}_3(C)$ is $3-$solvable extension of $k\langle z\rangle$, we obtain that $E$ is a $6-$solvable extension of $k.$ This is a contradiction since it is known that $E$ is not a $6-$solvable extension of $k$ (see \cite[Theorem 1, 2 \& Table 1]{Ngu09}).


\section{Preliminaries}
Unless otherwise explicitly mentioned, $k$ shall denote a differential field of characteristic zero with an algebraically closed field of constants $C$ and all differential fields are finitely generated field extensions of $k$. In this section, we shall introduce a few notations and record several facts from the theory of strongly normal extensions and Picard-Vessiot extensions, from \cite{Kol48, Kol53, Kol55, vdPS03}.  

For a differential field extension  $E$ of $k,$  $C_E$ will denote the field of constants of $E.$  A differential field extension $E$ of $k$ is said to be a \emph{strongly normal extension} of $k$ if the following holds:\begin{enumerate}[(i)]
    \item $E$ is a finitely generated field extension of $k$ with $C_E=C,$\\
    \item (Normality condition) If $\sigma: E\rightarrow M$ is a differential $k-$embedding of $E$ into a differential field extension $M$ of  $E$ then $\sigma E\subseteq  EC_M,$ where $EC_M$ is the field compositum of $E$ and $C_M.$ 	
\end{enumerate}

It is worthy to observe that $k\subseteq E\subseteq \mathcal E$ are differential fields, $E$ is a strongly normal extension of $k$ and $C_\mathcal E=C$ then  every differential $k-$embedding $\sigma: E\to \mathcal E$ becomes a differential $k-$automorphism of $E.$ The group of all $k-$automorphisms of a field extension $K$ will be denoted by $\mathrm{Aut}(K|k)$ and the \emph{differential Galois group} of a differential field extension $K$ of $k,$ denoted by $\mathscr G(E|k),$  is defined as the subgroup of $\mathrm{Aut}(K|k)$ whose elements  commute with the derivation map on $K:$ $$\mathscr G(K|k)=\{\sigma \in \mathrm{Aut}(K|k)\ | \ \sigma(x')=\sigma(x)'\ \text{for all }x \in K\}.$$

The  ring of linear differential operators over $k$ will be denoted by $k[\partial].$  Let $E$ be a differential field extension of $k.$ For a nonempty subset $S\subset E,$ $k\langle S\rangle$ will denote the smallest differential field containing $k$ and $S.$   The set of all elements $x\in E$ such that $\mathcal L(x)=0$ for some $\mathcal L\in k[\partial]$ will be denoted by $T(E|k).$ 

A differential field extension $E$ of $k$ is said to be a \emph{Picard-Vessiot extension} of $k$ for  a monic linear differential operator $\mathcal L\in k[\partial]$ of order $n$ if $C_E=C, E$ contains a $C-$vector space $V$ of dimension $n$ such that $\mathcal L(v)=0$ for all $v\in V$ and that $E=k\langle V\rangle.$ Let $E$ be a Picard-Vessiot extension of $k$ for a monic $\mathcal L\in k[\partial],$ $V\subset E$ be set of all solutions of $\mathcal{L}$ in $E$ and $y_1,\dots, y_n$ be a $C-$basis of $V.$ Let $M$ be a differential field extension of $E,$  $\sigma: E\rightarrow M$ be a differential $k-$embedding and $V_M=\mathrm{span}_{C_M}\{y_1,\dots, y_m\}$. Observe that $\sigma E=k\langle \sigma V\rangle$ and that $\sigma(y_1),\dots, \sigma(y_n)$ is a $C_M-$basis of $V_M$    ( \cite[Corollary 1.13]{vdPS03}). Thus, $V_M$ is the set of all solutions of $\mathcal L(y)=0$ in $M.$ Therefore, $\sigma V\subset V_M.$ This implies $\sigma E \subseteq E C_M$ and we obtain that $E$ is a strongly normal extension of $k.$ 
  
  The action of $\mathscr{G}(E|k)$ on $V$ induces a
  faithful representation  $$\rho: \mathscr G(E|k)\hookrightarrow\mathrm{GL}(V)$$ of groups. It can be shown that the image of the representation $\mathrm{im}(\rho)$ is a Zariski closed subgroup of $\mathrm{GL}(V).$ Thus one can identify $\mathscr G(E|k)$ with the $C-$points of a linear algebraic $C-$group. 
  
  Let $$V_x:=\mathrm{span}_C\{\sigma(x)\ | \ \sigma\in \mathscr G\}.$$ Then it can be shown that 
\begin{enumerate}[(i)]
	\item $T(E|k)=\{x\in E\ |  \ V_x \ \text{is finite dimensional over } \ C\}.$\\
	\item $T(E|k)$ is a differential $k-$algebra whose field of fractions, denoted by $\mathrm{Frac}(T(E|k)),$ is $E$.\\
    \item $T(E|k)$ is generated as a $k-$algebra by $y_1,\dots, y_n, y'_1,\dots, y'_n,\dots,$ $y^{(n-1)}_1,\dots, y^{(n-1)}_n.$
	\end{enumerate}

\subsection*{Galois correspondence} Let $E$ be a strongly normal extension of $k.$ Then, the group $\mathscr G(E|k)$ can be identified with the $C-$points of an algebraic $C-$group $\mathscr{G},$ which is not necessarily linear,  in such a way that if $E$ is a Picard-Vessiot extension of $k$ then the linear algebraic group $\mathrm{im}(\rho)$ is isomorphic to $\mathscr{G}.$ Furthermore, $\mathscr{G}$ is a linear algebraic $C-$group if and only if $E$ is a Picard-Vessiot extensions of $k$ (\cite[Example 2 \& Theorem 2]{Kol55}). There is a bijective (Galois) correspondence between the intermediate differential subfields of a strongly normal extension $E$ of $k$ and  Zariski closed subgroups  of $\mathscr{G}(E|k)$ given by the maps
\begin{align*}
	&K \to \mathscr{G}(E|K):=\left\lbrace \sigma \in \mathscr{G}(E|k)\ |\ \sigma(u)=u \  \ \text{for all}\  \ u  \in K \right\rbrace \\
	& \mathscr{H} \to E^{\mathscr{H}}:= \left\lbrace u  \ \in E\ |\ \sigma(u)=u \ \ \text{for all} \ \sigma  \in \mathscr{H} \right\rbrace,
\end{align*}
where $\mathscr H$ is a Zariski closed subgroup of $\mathscr{G}(E|k)$ and $K$ is a differential field intermediate to $E$ and $k.$ In this Galois correspondence,  a differential  field $K$ is algebraically closed in $E$  if and only if the closed subgroup $\mathscr G(E|K)$ is  connected;  in particular, if $k$ is an algebraically closed field then $\mathscr G(E|k)$ is a connected. The codimension of $\mathscr G(E|K)$ (as a variety) is equal to  $\mathrm{tr.deg(K|k)}$. An intermediate differential field  $k\subseteq K\subseteq E$ is a strongly normal extension of $k$ if and only if the subgroup $\mathscr G(E|K)$ is normal in $\mathscr G(E|k);$ in which case $\mathscr G(E^{\mathscr G(E|K)}|k)\cong\mathscr G(E|k)/\mathscr G(E|K).$ 

Let $E$ be a Picard-Vessiot of $k.$ If $K$ is a differential subfield of $E$ then $T(K|k)=T(E|k)\cap K$ and therefore $T(K|k)$ is a differential $k-$algebra. Though $E$ has the property that $\mathrm{Frac}(T(E|k))=E,$ its intermediate differential subfields may not have this property.  In fact, it may happen that $\mathrm{Frac}(T(K|k))=k.$  In \cite{And14}, Yves Andr\'{e} shows that a differential fields $K$ intermediate to $E$ and $k$ has the property $\mathrm{Frac}(T(K|k))=K$  if and only if $\mathscr G(E|K)$ is observable in $\mathscr G(E|k).$

 We shall conclude this section with the following proposition and a remark.

\begin{proposition} \label{solnoveralb.extn}
	Let $k\subseteq \tilde{k} \subseteq K\subset E$ be differential fields, $\tilde{k}$ be an algebraic extension of $k$ and $E$ be a Picard-Vessiot extension of $k.$ Then  $T(K|k)=T(K|\tilde{k}).$
	\end{proposition}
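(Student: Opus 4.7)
My plan is to establish the nontrivial inclusion $T(K|\tilde k) \subseteq T(K|k)$, the reverse inclusion being immediate from $k[\partial] \subseteq \tilde k[\partial]$. Given $x \in T(K|\tilde k)$, fix an operator $\mathcal L \in \tilde k[\partial]$ with $\mathcal L(x) = 0$. Since $\mathcal L$ has only finitely many coefficients, I may replace $\tilde k$ by the finite subextension of $k$ they generate, so without loss of generality $\tilde k/k$ is finite.

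\textbf{Reduction to a Galois extension inside $E$.} Because $E$ is a Picard-Vessiot extension of $k$ and $C$ is algebraically closed, the algebraic closure $k^0$ of $k$ in $E$ corresponds, under the Galois correspondence recalled in the preliminaries, to the identity component $\mathscr G^{\circ}$ of $\mathscr G(E|k)$; as $\mathscr G^{\circ}$ is normal of finite index, $k^0/k$ is a finite Galois extension, and hence it contains the Galois closure of $\tilde k/k$. I may therefore enlarge $\tilde k$ inside $E$ to assume $\tilde k/k$ is finite Galois, with group $G=\mathrm{Gal}(\tilde k/k)$.

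\textbf{The LCLM construction.} For each $\sigma \in G$, let $\sigma\mathcal L \in \tilde k[\partial]$ denote the operator obtained by applying $\sigma$ to the coefficients of $\mathcal L$. Working inside the left principal ideal domain $\tilde k[\partial]$, set
\[
\mathcal M \;=\; \mathrm{LCLM}\bigl(\sigma\mathcal L : \sigma \in G\bigr),
\]
chosen monic. The left ideal $\bigcap_{\sigma \in G} \tilde k[\partial]\cdot\sigma\mathcal L$ is stable under the coefficient-wise action of $G$, so its (monic) generator $\mathcal M$ is $G$-invariant, and therefore has coefficients in $\tilde k^{G} = k$, i.e., $\mathcal M \in k[\partial]$. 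Since $\mathcal M$ is a left multiple of $\mathcal L$ itself, $\mathcal M(x) = 0$, so $x \in T(K|k)$, as required.

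\textbf{Where the work sits.} The computation with LCLM is formal once one can realize every $\sigma\mathcal L$ as an honest differential operator acting on the element $x \in K \subseteq E$; the only delicate point is the reduction step, which relies on the fact that the algebraic closure of $k$ inside a Picard-Vessiot extension is itself a finite Galois extension of $k$. This is exactly what the connectedness statement of the Galois correspondence in the preliminaries provides, so no additional ingredient is needed.
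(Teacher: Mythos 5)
Your proof is correct, but it takes a genuinely different route from the paper's. The paper does not descend the operator directly: it lets $k^0$ be the algebraic closure of $k$ in $E$, invokes an external result (\cite[Proposition 2.1]{URVRS-21}) asserting $T(k^0K|k^0)=T(k^0K|k)$, sandwiches $T(k^0K|\tilde k)$ between these two equal sets using $\tilde k\subseteq k^0$, and then intersects with $K$. You instead give a self-contained Galois-descent argument: reduce to a finite Galois subextension of $k^0$ and take the monic $\mathrm{LCLM}$ of the conjugate operators $\sigma\mathcal L$, whose $G$-invariance forces its coefficients into $k$ while it remains a left multiple of $\mathcal L$. Both arguments hinge on the same structural fact — that $\tilde k$ sits inside the finite Galois extension $k^0/k$ supplied by the connectedness part of the Galois correspondence — but yours makes the descent explicit rather than outsourcing it, and in fact uses the Picard--Vessiot hypothesis only to locate the Galois closure inside $E$; since the derivation extends uniquely to any algebraic extension in characteristic zero (which is also why each $\sigma$ acts on $\tilde k[\partial]$ by ring automorphisms, a point worth stating explicitly), your argument would go through for an arbitrary differential overfield $K$ of $k$. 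The trade-off is that the paper's version is two lines modulo the citation, whereas yours is longer but elementary and essentially hypothesis-free.
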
 
\begin{proof} We only need to show $T(K|\tilde{k})\subseteq T(K|k).$
	Let $k^0$ be the algebraic closure of $k$ in $E.$ Then from \cite[Proposition 2.1]{URVRS-21}, $T(k^0K|k^0)=T(k^0K|k).$ Since $\tilde{k}\subseteq k^0,$ we have $T(k^0K|k)=T(k^0K|\tilde{k}).$ Therefore, 
	$$T(K|k)=T(k^0K|k)\cap K=T(k^0K|\tilde{k})\cap K=T(K|\tilde{k}).$$ 
	\end{proof}

\begin{remark} \label{embeddingSNafterfinitalg} Let $k\subseteq \tilde{k} \subseteq K$ be differential fields and $C_K=C$ and $\tilde{k}$ be an algebraic extension of $k.$ 

\begin{enumerate}[(i)] \label{PVafterfinite} \item Let $F$ be a finite Galois extension of $k$ containing $\tilde{k}$ and contained in an algebraic closure of $K.$ Let  $y\in T(K|\tilde{k})\setminus \tilde{k}$ such that $\mathcal{L}(y)=0$ for some $\mathcal L\in \tilde{k}[\partial].$ Let $E$ be a Picard-Vessiot extension of $F$ for $\mathcal L.$ We then have $k\subseteq F\subseteq E,$ where $F$ is a Picard-Vessiot (finite Galois) extension of $k$ and $E$ is a Picard-Vessiot extension of $F.$ Applying \cite[Proposition 5]{Juan-Magid-08}, we obtain that $E$ can be embedded in a Picard-Vessiot extension $E_1$ of $k.$ Thus we have $k\subseteq \tilde{k} \subseteq \tilde{k}\langle y\rangle \subseteq E_1.$ Now, applying Proposition \ref{solnoveralb.extn}, we obtain that $y\in T(K|k).$ Form this it follows that $T(K|\tilde{k})=T(K|k).$\\
\item\label{SNafterfinitealg} If $K$ is a strongly normal extension of $\tilde{k}$ then from \cite[Proposition 20]{Kov-73}, it can be seen that $K$ is contained in a union of strongly normal extensions of $k.$ Since $K$ is finitely generated field extension of $k,$ we obtain that $K$ is contained in some strongly normal extension of $k.$
\end{enumerate}
	\end{remark}


\section{Differential subfields of Picard-Vessiot extensions}

This section is devoted to proving Theorem \ref{intromain}.  The proof is essentially broken up in to several lemmas and propositions. We shall first prove a field-theoretic proposition which is crucial to the proof of Theorem \ref{intromain}.  

\begin{proposition}\label{Field-descent}
Let $k\subset E$ be field extensions and let $K, L, M$ be intermediate subfields such that  $K$ and $M$ are free over $k,$ $M\subset L\subseteq KM$ and that $\mathrm{tr.deg}(L|M)\geq 1.$ Suppose that $K$ is the fixed field of a subgroup $\mathscr{G}$ of $\mathrm{Aut}(KM|K)$ and that $L$ is a stable under the group $\mathscr G.$  Then,  there is an element $y\in (K\setminus k)\cap L$. If $L=M(\eta),$ $\eta$ is transcendental over $M$ and  $\sigma(\eta)=a_\sigma \eta$ for all $\sigma\in \mathscr{G}$ and $a_\sigma\in M$ then there is an element $x\in M$ and a positive integer $l$ such that $x\eta^l\in (K\setminus k)\cap L.$ 
\end{proposition}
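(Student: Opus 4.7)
The plan is to convert the fixed-field hypothesis into the identity $L^{\mathscr{G}}=L\cap K$ via Galois descent, and then use the transcendence hypothesis to force $L\cap K$ to properly contain $k$.

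Since $\mathscr{G}\subseteq\mathrm{Aut}(KM|K)$ fixes $K$ pointwise and $(KM)^{\mathscr{G}}=K$, the $\mathscr{G}$-stability of $L$ immediately yields $L^{\mathscr{G}}=L\cap(KM)^{\mathscr{G}}=L\cap K$. By the freeness of $K$ and $M$ over $k$ one has $K\cap M=k$, and the extension $KM/K$ behaves like a Galois extension with group $\mathscr{G}$. I would then run a descent argument on the $\mathscr{G}$-stable intermediate subfield $L\supseteq M$: using $KM=\mathrm{Frac}(K\otimes_k M)$, the $\mathscr{G}$-stable subfield $L$ decomposes as the compositum $L=(L\cap K)M$. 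The freeness-preserving identity $\mathrm{tr.deg}((L\cap K)M\,|\,M)=\mathrm{tr.deg}(L\cap K\,|\,k)$, together with the hypothesis $\mathrm{tr.deg}(L|M)\geq 1$, then forces $\mathrm{tr.deg}(L\cap K\,|\,k)\geq 1$, so $L\cap K\supsetneq k$ and any element of $L\cap K$ outside $k$ is the desired $y\in(K\setminus k)\cap L$.

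For the second assertion, with $L=M(\eta)$ and $\sigma(\eta)=a_\sigma\eta$, apply the first part to obtain a $y\in(L\cap K)\setminus k$ and write $y=p(\eta)/q(\eta)$ for coprime $p,q\in M[\eta]$. The relation $\sigma(y)=y$ together with coprimality forces $\sigma(p)=u_\sigma p$ and $\sigma(q)=u_\sigma q$ for a common character $u_\sigma\in M^{\times}$. Expanding $p=\sum_l x_l\eta^l$ and using $\sigma(\eta)=a_\sigma\eta$, equating coefficients gives $\sigma(x_l)\,a_\sigma^{l}=u_\sigma x_l$, which rearranges to $\sigma(x_l\eta^{l})=u_\sigma(x_l\eta^{l})$. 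Thus each nonzero monomial $x_l\eta^{l}$ is itself a $\sigma$-eigenvector with the same character $u$. The quotient of any two such monomials of distinct degrees is therefore $\mathscr{G}$-invariant and of the form $x\eta^{l}$ with $l\neq 0$ and $x\in M$; after possibly replacing the quotient by its reciprocal we may take $l\geq 1$, and the resulting $x\eta^{l}$ lies in $(L\cap K)\setminus k$ since it involves $\eta$ nontrivially. The degenerate cases in which $p$ or $q$ consists of a single monomial are handled by observing that $y$ itself is then already of the form $x\eta^{l}$, and the condition $y\notin k$ forces $l\neq 0$.

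The principal technical point will be the descent step $L=(L\cap K)M$; in full generality this requires both $\mathscr{G}$-stability of $M$ and enough Galois-ness of $KM/K$, and in the intended application (where $\mathscr{G}$ is the Galois group of a Picard-Vessiot or strongly normal extension) this is automatic from the algebraic-group structure of $\mathscr{G}$ and standard Galois descent.
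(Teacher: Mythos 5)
Your second half (writing $y=p/q$ with $p,q$ coprime, extracting the common character $u_\sigma$ from $\sigma(p)q=\sigma(q)p$, and taking the quotient of two monomials of distinct degree to get a $\mathscr G$-invariant $x\eta^l$) is essentially the paper's own argument for that part and is fine. The problem is the first half. Your derivation of $y\in(K\setminus k)\cap L$ rests entirely on the descent identity $L=(L\cap K)M$, which you do not prove and which you yourself flag as requiring ``$\mathscr G$-stability of $M$ and enough Galois-ness of $KM/K$,'' to be supplied only ``in the intended application.'' That is a genuine gap, not a technical footnote. The proposition is stated for an arbitrary field extension $KM/K$ and an arbitrary subgroup $\mathscr G\subseteq\mathrm{Aut}(KM|K)$ with fixed field $K$: the extension is transcendental, there is no Galois correspondence or linear reductivity available, and ``every $\mathscr G$-stable intermediate field between $M$ and $KM$ is generated over $M$ by its intersection with $K$'' is a strictly \emph{stronger} assertion than the conclusion being proved (which asks only for a single element of $(L\cap K)\setminus k$). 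Deferring it to the applications is also circular in spirit: the proposition is invoked in Lemmas \ref{unipotent-case} and \ref{torus-case} precisely in order to produce that first element of $K\setminus k$ inside $L$, before anything about the structure of $L$ over $M$ is known.

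What actually closes the gap is an elementary minimal-relation argument, which is what the paper does. Take a transcendence basis $x_1,\dots,x_n$ of $K$ over $k$; since $K$ and $M$ are free over $k$ and $\mathrm{tr.deg}(L|M)\geq 1$, the $x_i$ become algebraically dependent over $L$, so there is a relation $P(x_1,\dots,x_n)=0$ with $P\in L[X_1,\dots,X_n]$. Choose $P$ with the minimal number of terms and normalize one coefficient to $1$. For $\sigma\in\mathscr G$, the polynomial $P_\sigma-P$ (apply $\sigma$ to coefficients) still vanishes at $(x_1,\dots,x_n)$ because $\sigma$ fixes each $x_i\in K$, has strictly fewer terms, and hence is zero by minimality; thus every coefficient of $P$ lies in $(KM)^{\mathscr G}=K$, while algebraic independence of the $x_i$ over $k$ forces some coefficient to lie outside $k$. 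That coefficient is the desired $y\in(K\setminus k)\cap L$. You would need to replace your descent step with an argument of this kind, or else actually prove $L=(L\cap K)M$ in the stated generality, which is not justified as written.
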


\begin{proof}
\sloppy Let $x_1,x_2,\dots,x_n$ be a transcendence base of $K$ over $k$.   Then since $K$ and $M$ are free over $k$ and that $\mathrm{tr.deg}(L|M)\geq 1,$ the elements $x_1,x_2,\dots,x_n$ must be algebraically dependent over $L.$   Let $P\in L[X_1,\dots,X_n]$ be a polynomial with minimal number of terms such that $P(x_1,\dots,x_n)=0$. We may assume that 
$$P=\sum^t_{i=1}m(\alpha_{1i},\dots,\alpha_{ni})X^{\alpha_{1i}}_1\cdots X^{\alpha_{ni}}_n,$$
where for each $i,$ $m(\alpha_{1i},\dots,\alpha_{ni})\in L$ and $m(\alpha_{11},\dots,\alpha_{n1})=1.$  From the algebraic independence of $x_1,\dots, x_n$ over $k$, it is clear that there is a integer $j$ such that $m(\alpha_{1j},\dots,\alpha_{nj})\in L\setminus k$.  For $\sigma\in \mathscr{G},$ let $$P_{\sigma}=\sum^t_{i=1}\sigma(m(\alpha_{1i},\dots,\alpha_{ni}))X^{\alpha_{1i}}_1\cdots X^{\alpha_{ni}}_n$$  and 
let $Q_{\sigma}=P_{\sigma}-P$. Then, since $\mathscr{G}$ stabilizes $L,$ both $P_\sigma$ and $Q_\sigma$ belong to $L[X_1,\dots, X_n].$ Now, $P(x_1,\dots,x_n)=0$ implies $P_{\sigma}(x_1,\dots,x_n)=0$ and therefore $Q_{\sigma}(x_1,\dots,x_n)=0$. Since $m(\alpha_{11},\dots,\alpha_{n1})=1,$ the polynomial   $Q_{\sigma}\in L[X_1,\dots,x_n]$ has fewer nonzero terms than $P$ and therefore $Q_\sigma=0$. That is, $\sigma(m(\alpha_{1i},\dots,\alpha_{ni}))=m(\alpha_{1i},\dots,\alpha_{ni})$ for all $i=1,2,\dots, t.$ Since $K$ is the fixed field of $\mathscr{G},$ we see that all the coefficients of $P_\sigma$ belong to $K.$   In particular, $m(\alpha_{1j},\cdots,  \alpha_{nj})\in (K\setminus k)\cap L.$ 

Suppose that $L=M(\eta)$ and $\sigma(\eta)=a_\sigma \eta$ for all $\sigma\in \mathscr{G}$ and $a_\sigma\in M.$ Choose   $y\in (K\setminus k)\cap L$ and write $$y=
\frac{P}{Q},$$ where $P,Q\in M[\eta],$ $(P,Q)=1$ and $Q$ is monic.  Then for any $\sigma\in \mathscr G,\sigma(y)=y$ implies $$\sigma(P)Q=\sigma(Q)P.$$ Since, $\sigma(\eta)=a_\sigma \eta,$ it follows that $\sigma(P)=r_{\sigma}P$ and $\sigma(Q)=r_{\sigma}Q$ for some $r_\sigma\in M.$ If both $P$ and $Q$ have only one term then it easily follows that $y=x\eta^m$ for some $x\in M$ and a nonzero integer $m.$ Thus, either $y$ or $1/y$ has the desired form. Therefore, we may assume $P$ has more than two terms and write $$P=a_n\eta^n+a_{n-1}\eta^{n-1}+\cdots+a_0,$$ where
$a_0,\dots, a_n\in M$ with $a_n\neq 0$ and $a_i\neq 0$ for some $i.$ Then, since $\sigma(\eta)=a_\sigma \eta$ and that $\sigma(P)=r_{\sigma}P,$ it follows that $\sigma (a_n\eta^n)=r_\sigma a_n\eta^n$ and $\sigma (a_i\eta^i)=r_{\sigma} a_i\eta^i.$ This implies $x\eta^l\in (K\setminus k)\cap L,$ where $x=a_n/a_i$ and $l=n-i.$\end{proof}

\begin{lemma}\label{unipotent-case} 
	Let $E$ be a Picard-Vessiot extension of $k$ and  $\mathscr{U}$ be the unipotent radical of $\mathscr{G}.$ Suppose that $K$ is a differential subfield intermediate to $k$ and $E$ such that  $E^{\mathscr{U}}\cap K=k.$ Then, there are an element $\zeta \in T(K|k)\setminus k,$ and monic operators  $\mathcal L_1, \mathcal D\in k[\partial],$ where  $\mathrm{ord}(\mathcal L_1)=1$ and $\mathcal D$ is irreducible, such that $\mathcal L_1(\mathcal D(\zeta))=0,$ $\mathcal D(\zeta)\neq 0$ and that  $\mathrm{ord}(\mathcal L_1\mathcal D)\leq \mathrm{tr.deg}(K|k)+1.$ 
	\end{lemma}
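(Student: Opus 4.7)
The plan is to translate the hypothesis into a group-theoretic condition on $\mathscr{H} := \mathscr{G}(E|K)$, pick $\zeta \in T(K|k) \setminus k$ whose annihilator has minimal order, and read off the factorization from the $\mathscr{G}$-module structure of its solution space.

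Under the Galois correspondence the hypothesis $E^{\mathscr{U}} \cap K = k$ is equivalent to $\mathscr{H}\mathscr{U} = \mathscr{G}$; in particular the composite $\mathscr{H} \hookrightarrow \mathscr{G} \twoheadrightarrow \mathscr{G}/\mathscr{U}$ is surjective, so on any $\mathscr{G}$-module on which $\mathscr{U}$ acts trivially the $\mathscr{H}$- and $\mathscr{G}$-invariants coincide. Assume $K \supsetneq k$ (otherwise the conclusion is vacuous). I first verify $T(K|k) \neq k$: were it equal, then by Andr\'e's theorem the observable hull of $\mathscr{H}$ in $\mathscr{G}$ would be all of $\mathscr{G}$, forcing $\mathscr{G}/\mathscr{H}$ to be projective and $\mathscr{H}$ parabolic; but every parabolic subgroup contains $\mathscr{U}$, so $\mathscr{H} \supseteq \mathscr{U}$ and $\mathscr{H} = \mathscr{H}\mathscr{U} = \mathscr{G}$, contradicting $K \neq k$.

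Now pick $\zeta \in T(K|k) \setminus k$ so that its monic annihilator $\mathcal{L} \in k[\partial]$ has the smallest possible order $n$, and set $V := \ker \mathcal{L} \subset T(E|k)$. Then $V$ is a finite-dimensional $\mathscr{G}$-submodule of $E$ of dimension $n$, cyclically generated by $\zeta$. Let $W$ be any maximal proper $\mathscr{G}$-submodule of $V$; then $V/W$ is simple, and because $(V/W)^{\mathscr{U}}$ is a nonzero $\mathscr{G}$-stable subspace ($\mathscr{U}$ unipotent), it equals $V/W$, so $\mathscr{G}$ acts on $V/W$ through $\mathscr{G}/\mathscr{U}$. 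Were $\zeta$ in $W$, the monic operator $\mathcal{D}$ with $\ker \mathcal{D} = W$, of order $\dim W < n$, would annihilate $\zeta$, contradicting the choice of $n$; hence $\bar\zeta \neq 0$ is $\mathscr{H}$-fixed in $V/W$ and, by the opening observation, is $\mathscr{G}$-fixed. Simplicity forces $V/W$ to be the trivial one-dimensional $\mathscr{G}$-module, so $\mathcal{D}(V) \subseteq E^{\mathscr{G}} = k$; in particular $y := \mathcal{D}(\zeta) \in k$ is nonzero, and the desired factorization is $\mathcal{L} = \mathcal{L}_1 \mathcal{D}$ with $\mathcal{L}_1 := \partial - y'/y$. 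The irreducibility of $\mathcal{D}$ is forced by the same minimality: a nontrivial right factorization $\mathcal{D} = \mathcal{D}_1 \mathcal{D}_s$ would produce $\zeta_1 := \mathcal{D}_s(\zeta) \in K$, nonzero since $\ker \mathcal{D}_s \subseteq W \not\ni \zeta$, and annihilated by $\mathcal{L}_1 \mathcal{D}_1$ of order $\mathrm{ord}(\mathcal{D}_1)+1 < n$; if $\zeta_1 \in k$ this shrinks the annihilator of $\zeta$, otherwise $\zeta_1 \in T(K|k) \setminus k$ has a strictly smaller minimal annihilator, each case contradicting our choice.

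For the order bound, consider the $1$-cocycle $\phi : \mathscr{G} \to W$, $\sigma \mapsto \sigma\zeta - \zeta$, which vanishes on $\mathscr{H}$ (as $\zeta$ is $\mathscr{H}$-fixed) and is $\mathscr{H}$-equivariant under conjugation on $\mathscr{G}$. Since $\mathscr{U}$ acts trivially on the simple $\mathscr{G}$-module $W$, the restriction $\phi|_{\mathscr{U}} : \mathscr{U} \to W$ is a homomorphism of algebraic groups whose image is an $\mathscr{H}$-invariant subspace of $W$, and hence a $\mathscr{G}$-submodule (as $\mathscr{H}$ and $\mathscr{G}$ induce the same action on $W$). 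This image is nonzero, for otherwise $\zeta$ would lie in $V^{\mathscr{U}}$ and the opening observation would give $\zeta \in (V^{\mathscr{U}})^{\mathscr{G}} \subseteq k$, contrary to $\zeta \notin k$. Simplicity of $W$ then forces $\phi|_{\mathscr{U}}$ to be surjective onto $W$, with kernel containing $\mathscr{H} \cap \mathscr{U}$, so
\[
\dim W \;\leq\; \dim \mathscr{U} - \dim(\mathscr{H} \cap \mathscr{U}) \;=\; \dim \mathscr{G} - \dim \mathscr{H} \;=\; \mathrm{tr.deg}(K|k),
\]
whence $\mathrm{ord}(\mathcal{L}_1 \mathcal{D}) = 1 + \dim W \leq \mathrm{tr.deg}(K|k) + 1$. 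The main conceptual step is identifying $V/W$ as the trivial $\mathscr{G}$-module, which is where the hypothesis $E^{\mathscr{U}} \cap K = k$ enters decisively.
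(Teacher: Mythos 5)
Your route is genuinely different from the paper's (which works field-theoretically inside $KE^{\mathscr U}=E^{\mathscr U}(\eta_1,\dots,\eta_m)$ via the Kolchin--Ostrowski theorem and Proposition \ref{Field-descent}), and much of it is sound: the cocycle $\sigma\mapsto\sigma\zeta-\zeta$ restricted to $\mathscr U$, giving $\dim W\le\dim\mathscr U-\dim(\mathscr H\cap\mathscr U)=\mathrm{codim}\,\mathscr H=\mathrm{tr.deg}(K|k)$, is a clean replacement for the paper's third and fourth steps. But there is a genuine error at the first substantive step, the verification that $T(K|k)\neq k$. You argue that $T(K|k)=k$ would make the observable hull of $\mathscr H$ all of $\mathscr G$, ``forcing $\mathscr G/\mathscr H$ to be projective and $\mathscr H$ parabolic.'' That implication is false: subgroups with observable hull equal to $\mathscr G$ (epimorphic subgroups) need not be parabolic and their homogeneous spaces need not be complete --- e.g.\ $\mathrm{SL}_3$ has proper non-parabolic epimorphic subgroups of dimension $3$ (Bien--Borel), well below the dimension of any parabolic. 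Only the converse (proper parabolics are epimorphic) is true. Since every later step presupposes the existence of $\zeta\in T(K|k)\setminus k$, and since this nonemptiness is exactly where the paper spends most of its effort, the proof as written has a real hole here.

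The claim itself is recoverable by a correct group-theoretic argument: $E^{\mathscr U}\cap K=k$ gives $\mathscr H\mathscr U=\mathscr G$, so $\mathscr H$ surjects onto the reductive group $\mathscr G/\mathscr U$; by Levi--Mostow one may arrange $\mathscr H=\mathscr L\ltimes(\mathscr H\cap\mathscr U)$ with $\mathscr L$ a Levi subgroup of $\mathscr G$, whence $\mathscr G/\mathscr H\cong\mathscr U/(\mathscr H\cap\mathscr U)$ is an affine space by Rosenlicht's theorem on homogeneous spaces of unipotent groups; thus $\mathscr H$ is observable and Andr\'e's theorem yields $\mathrm{Frac}(T(K|k))=K\neq k$ directly, with no detour through parabolics. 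Two smaller repairs are also needed: you take $W$ to be \emph{any} maximal proper submodule of $V$ and later call it ``the simple $\mathscr G$-module $W$,'' but a maximal proper submodule need not be simple --- simplicity must be extracted from the irreducibility of $\mathcal D$ you just established (a nonzero proper submodule $W'\subset W$ produces the right factor $\mathcal L_{W'}$ of $\mathcal D$ over $E^{\mathscr G}=k$); and the triviality of the $\mathscr U$-action on $W$, which you use to make $\phi|_{\mathscr U}$ a homomorphism, then follows by the same Lie--Kolchin argument you applied to $V/W$ but is currently unstated.
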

	\begin{proof} The proof of the lemma consists of four parts. In the first part, we will show that 	$T(K|k)\setminus k\neq \emptyset.$ In the second part, we will show the existence of monic operators $\mathcal L_1, \mathcal D\in k[\partial],$ where $\mathrm{ord}(L_1)=1$ and $\mathcal D$ is irreducible, and an element $\zeta\in T(K|k)\setminus k$ such that $\mathcal D(\zeta)\neq 0$ and that $\mathcal L_1(\mathcal D(\zeta))=0.$  In the third part, we find a positive integer $m\leq \mathrm{tr.deg}(K|k)$ such that $\zeta=u_0+\sum^m_{i=1}u_i\eta_i,$ where $u_0,u_1,\dots,u_m\in E^\mathscr U$  and that $\eta'_i\in E^\mathscr U$ for $i=1,\dots,m.$ The last part of the proof  uses this description of $\zeta$ to show that  $\mathrm{ord}(\mathcal D)\leq m.$    
	
	We first observe that $\mathscr U\subseteq \mathscr G^0,$ the condition $E^{\mathscr{U}}\cap K=k$ implies that $k$ is algebraically closed in $K.$ 
	Next, by Galois correspondence, $E^{\mathscr{U}}$ is a Picard-Vessiot extension of $k$ with differential Galois group $\mathscr{G}/\mathscr U.$ Since $K\cap E^\mathscr U=k,$  $KE^{\mathscr{U}}$ is also a Picard-Vessiot extension of $K$ whose differential Galois group is isomorphic to $\mathscr {G}/\mathscr{U}.$ Furthermore, the isomorphism map is nothing but the restriction map:  $$\mathscr G(KE^{\mathscr{U}}|K)\cong \mathscr G(E^\mathscr{U}|E^\mathscr{U}\cap K)=\mathscr G(E^\mathscr{U}|k).$$ 
	
	Thus, $\mathrm{ tr. deg}(K|k)=\mathrm{ tr. deg}(KE^{\mathscr{U}}|E^\mathscr{U})$ and therefore $K$ and $E^\mathscr{U}$ are free over $k.$ Since $k$ is algebraically closed in $K,$  $E^{\mathscr{U}}$  and $K$ are linearly disjoint over $k$. Note that $E$ is a Picard-Vessiot extension of $E^{\mathscr U}$ with $\mathscr U$ as its differential Galois group, it follows from \cite[Theorem 4.4]{KRS24} that there is an element $\eta\in KE^\mathscr U\setminus E^{\mathscr U}$ such that $\eta'\in E^{\mathscr U}.$

    \begin{minipage}{.7\textwidth}  Let $\mathcal A=\{\eta\in KE^{\mathscr U}\ | \ \eta'\in E^\mathscr U\}$ and $L$ be the field generated over $E^\mathscr U$ by $\mathcal A.$ Then $L$ is a differential field and it can be shown that $L=E^\mathscr U(\eta_1,\dots, \eta_m),$ where $\eta_1,\dots, \eta_m\in \mathcal A$ are algebraically independent over $E^\mathscr U$.  As noted earlier, every element $\sigma\in \mathscr G(KE^{\mathscr{U}}|K)$ is a  differential $k-$automorphism of $E^\mathscr U.$ Therefore for $\eta\in \mathcal A$ we have $\sigma(\eta)'=\sigma(\eta')\in E^{\mathscr U}$  and thus $\mathcal A$ is a $\mathscr G(KE^{\mathscr{U}}|K)-$stable set. Consequently,  we obtain that $L$ is  a $\mathscr G(KE^{\mathscr{U}}|K)-$stable (differential) field. Applying Proposition \ref{Field-descent}, we obtain an element $y\in (K\setminus k)\cap L$.  We also note there that $L$ is a Picard-Vessiot extension of $E^{\mathscr U},$ whose  differential Galois group isomorphic to the commutative vector group $\prod^m\mathrm G_a.$ 
\end{minipage} \begin{minipage}{.3\textwidth}
	 	\hspace{1em}\begin{tikzcd}[row sep=2ex, column sep= .05ex]
	 		& &E\ar[dash]{ddd}\ar[dash]{dll}\\
	 	KE^{\mathscr U}\ar[dash]{dr}\ar[dash]{d}& &\\
	 		K \ar[dash]{dr}& L \ar[dash]{dr}\ar[dash]{d}& \\
	 		& k(y)\ar[dash]{dr}& E^{\mathscr U}\ar[dash]{d}\\
	 		&& k
	 	\end{tikzcd}
	 \end{minipage}
 
 The action of the automorphisms from $\mathscr G(KE^\mathscr U|K)$ on $L$ can be explicitly calculated. For any $\sigma\in \mathscr G(KE^\mathscr U|K)$ and for any $i=1,2,\dots, m,$ we see that $\sigma(\eta_i), \eta_1,\dots, \eta_m$ are algebraically dependent over $E^\mathscr U.$ By Kolchin-Ostrowski Theorem, we obtain that $$\sigma(\eta_i)=f_{\sigma}+\sum^m_{j=1}c_{ij\sigma}\eta_j,$$ where $c_{ij\sigma}$ are constants and $f_\sigma\in E^{\mathscr U}.$ Thus, the differential ring $E^{\mathscr U}[\eta_1,\dots,\eta_m]$ is also $\mathscr G(KE^\mathscr U|K)-$stable. If we write $y=P/Q,$ for $P, Q\in E^{\mathscr U}[\eta_1,\dots,\eta_m]$ with $(P,Q)=1$  then $\sigma(y)=y$ implies $\sigma(P)Q=\sigma(Q)P$ and we obtain $\sigma(P)=r_\sigma P$ and $\sigma(Q)=r_\sigma Q$ for some $r_\sigma\in E^{\mathscr U}.$  
 
 We may assume $P\notin E^{\mathscr U}$ and observe that the field $E^{\mathscr U}(P)$ is $\mathscr G(KE^\mathscr U|K)-$stable. Thus, again applying Proposition \ref{Field-descent}, we obtain an element $\zeta:=xP^l\in K\setminus k$ for some $x\in E^{\mathscr U}.$ Note that $\zeta\in E^{\mathscr U}[\eta_1,\dots,\eta_m]=T(L|E^{\mathscr U})$ and in particular, for some positive integer $n,$ $\zeta, \zeta',\dots, \zeta^{(n)}$ are linearly dependent over $E^{\mathscr U}.$ Now since $K$ and $E^\mathscr U$ are linearly disjoint over $k,$ we obtain that $\zeta, \zeta',\dots, \zeta^{(n)}$ are linearly dependent over $k.$ This shows that $k\subset T(K|k)$ and thus the first part of the proof is complete.
 
 Choose a monic operator $\mathcal L\in k[\partial]$ of smallest order such that $\mathcal L(\zeta)=0$ for some  $\zeta\in (T(K|k)\setminus k)\cap E^{\mathscr U}[\eta_1,\dots,\eta_m].$ We shall now show that $$\zeta=u_0+\sum^m_{i=1}u_i\eta_i$$ where $u_0,u_1,\dots,u_m\in E^\mathscr{U}.$
  Let $V\subset E$ be the set of all solutions of $\mathcal L(t)=0.$ Note that $\mathcal L(t)=0$ has  a nonzero solution in $E^\mathscr U$ (see \cite[Lemma 6]{Sin-Dav} or \cite[Proposition 2.2]{Sri20}). Let $W=V\cap E^{\mathscr U}.$ Since  $E^{\mathscr U}$ is a Picard-Vessiot extension of $k,$ $W$ is $\mathscr G-$stable. Let $W_1$ be any nonzero $\mathscr G-$stable subspace of $W.$ Then, $$\mathcal L= \mathcal L_1 \mathcal L_{W_1},$$ $\mathcal L_1, \mathcal L_{W_1}\in k[\partial]$ and $W_1$ is the full set of solutions of $\mathcal L_{W_1}.$ Note that $\mathrm{ord}(\mathcal L_{W_1})=\mathrm{dim}_C(W_1).$ Since $\zeta\notin W,$  $\mathrm{ord}(\mathcal L_1)<\mathrm{ord} (\mathcal L)$ and $\mathrm{ord}(\mathcal L_{W_1})<\mathrm{ord}(\mathcal L).$ As $\mathcal L(\zeta)=0$ and $\mathcal L_{W_1}(\zeta)\in K,$ from the minimality of $\mathcal L,$ we obtain that $\mathcal L_{W_1}(\zeta)=b\in k\setminus \{0\}.$ By homogenizing this inhomogeneous equation, we obtain that $\mathrm{ord}(\mathcal L)= \mathrm{ord}(\mathcal L_{W_1})+1.$  Observe that $$V=C\zeta\oplus W_1,$$  
  $W\subset V,$ $W_1\subseteq W$ and that $\zeta\notin W.$ Therefore, $W_1=W$ and in particular, $W$ is an irreducible $\mathscr G-$module.  Thus, we have shown the existence of monic operators $\mathcal L_1, \mathcal L_W\in k[\partial]$ such that \begin{equation}\label{orderofL}\mathcal L= \mathcal L_1 \mathcal L_{W},\end{equation}  
  where $\mathcal L_W(\zeta)=b\in k\setminus \{0\},$ $\mathcal L_W$ is irreducible, $\mathrm{ord}(\mathcal L_1)=1$  and that $\mathcal L(\zeta)=0.$ This completes the proof of the second part. 
  
  For an  $\eta\in \{\eta_1,\dots, \eta_m\},$ let $S_\eta=\{\eta_1,\dots, \eta_m\}\setminus \{\eta\}$ and  $\zeta=a_l\eta^l+a_{l-1}\eta^{l-1}+\cdots+a_0,$ where $a_i\in E^{\mathscr U}(S_\eta)$ and $a_l\neq 0.$ 
  We will show that $a_l\in W \subset E^{\mathscr U}$ and that $l=1.$ This will then prove that $\zeta=u_0+\sum^m_{i=1}u_i\eta_i,$ where each $u_i\in E^{\mathscr U}.$
  
  From \cite[Proposition 2.2]{Sri20},  we obtain that $\mathcal L(a_l)=0$ and thus  $a_l=c\zeta+w$ for  $c\in C$ and $w\in W.$ Now $a_l\in V\cap E^{\mathscr U}(S_\eta)$ implies $c=0$ and we obtain $a_l\in W\subset E^\mathscr U.$ For $c\in C\setminus \{0\},$ consider the differential $E^{\mathscr U}(S_\eta)-$automorphism $\sigma_c $ of $L=E^\mathscr U(\eta_1,\dots,\eta_m)$ defined by, $$\sigma_c(\eta)=\eta+c, \quad \sigma_c(\eta_i)=\eta_i \ \text{for all }\ \eta_i\neq \eta\quad\text{and }\ \sigma_c(u)=u\ \text{for all }\ u\in E^{\mathscr U}.$$ Then, since $\sigma_c$ is also $k-$linear, $\sigma_c(\zeta)\in V$ and therefore, $\sigma_c(\zeta)-\zeta\in V$ as well.  Moreover, it easily seen that \begin{equation}\label{degetaisone}\sigma_c(\zeta)-\zeta=la_lc\eta^{l-1}+ \text{lower powers of } \eta.\end{equation}  
  
 For every $v\in V,$ we have $v=c(a_l\eta^l+a_{l-1}\eta^{l-1}+\cdots+a_0)+w,$ where $c\in C$ and $w\in E^{\mathscr U}.$ Thus have the following 
 \begin{enumerate}[(i)]\item $v$ has degree $l$ in $\eta$ if and only if $c\neq 0$ 
	\item $v\in W\subset E^{\mathscr U}$ if and only if $c=0.$ \end{enumerate}  Now, we obtain from Equation (\ref{degetaisone}), that $l=1$ and that
  $\zeta=u_0+\sum^m_{i=1}u_i\eta_i,$ where $u_i\in E^{\mathscr U}$ for $i=0,\dots,m.$ This completes the thrid part of the proof.
  
  Finally, let $\zeta=u_0+\sum^m_{i=1}u_i\eta_i,$ $u_0, \dots, u_m\in E^{\mathscr U}.$ Again by \cite[Proposition 2.2]{Sri20}, we see that $u_1,\dots,u_n\in W.$  For any $\sigma\in \mathscr G(KE^{\mathscr U}|K),$ we know that $$\sigma(\eta_i)=f_{i\sigma}+\sum^m_{j=1}c_{ij\sigma}\eta_j,$$ where $c_{ij\sigma}$ are constants and $f_{i\sigma}\in E^{\mathscr U}.$ Since $\eta_1,\dots, \eta_m$ are algebraically independent over $E^{\mathscr U},$ it can be seen that the product of the matrices $(c_{ij\sigma]})$ and $(c_{ij\sigma^{-1}})$ is the identity matrix. That is $(c_{ij\sigma]})^{-1}=(c_{ij\sigma^{-1}}).$ Since $\zeta\in K,$ we have $\sigma(\zeta)=\zeta$ and we obtain $$\sigma(u_i)=\sum^m_{j=1}c_{ji\sigma^{-1}}u_j$$ for all $\sigma\in \mathscr G(KE^{\mathscr U}|K).$ Now since $\mathscr G(E^{\mathscr U}| k)\cong \mathscr G(KE^{\mathscr U}|K)$ and the isomorphism is given by the restriction map, we obtain that $U=\mathrm{span}_C\{u_1,\dots,u_m\}$ is $\mathscr G(E^{\mathscr U}| k)-$stable $C-$ subspace of $W.$  The restriction map from $\mathscr G$ to $\mathscr G(E^{\mathscr U}| k)$ is also surjective and therefore $U$ becomes a $\mathscr G-$stable subspace of $W.$ We noted in the second part of the proof that $W$ is an irreducible $\mathscr G-$module. Thus, $U=W$ and we have   $$\mathrm{ord}(\mathcal L_{W})=\mathrm{ord}(\mathcal L_U)=\mathrm{dim}_C(U)\leq m \qquad \text{and}\quad \mathcal L_W(\zeta)\in k\setminus\{0\}.$$ 
  
  Now from Equation (\ref{orderofL}),  we obtain $\mathrm{ord}(\mathcal L)\leq m+1$ and this completes the proof of the final part of the lemma.
  \end{proof}         

\begin{lemma}\label{torus-case}
	Let $E$ be a Picard-Vessiot extension of $k,$ $\mathscr G:=\mathscr G(E|k)$ be a connected  reductive linear algebraic group and $\mathscr{R}$ be its radical.  Let $K$ be a differential subfield intermediate to $k$ and $E.$ Suppose that  $ E^{\mathscr{R}}\cap K=k.$ Then  $k\subset T(K|k)$  and there is an element $\zeta\in T(K|k)\setminus k$ such that $\zeta'/\zeta\in k.$ \end{lemma}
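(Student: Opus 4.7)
The plan is to mimic the structure of the proof of Lemma \ref{unipotent-case}, replacing the additive (unipotent) setup by its multiplicative (torus) analogue, and then to exploit the centrality of $\mathscr R$ in $\mathscr G$ so that Proposition \ref{Field-descent} can be applied in its clean ``$\sigma(\eta)=a_\sigma\eta$'' form on a single transcendental generator.

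First, since $\mathscr G$ is reductive and connected, $\mathscr R$ is a central closed torus, and $E^{\mathscr R}$ is a Picard-Vessiot extension of $k$ with Galois group $\mathscr G/\mathscr R$; connectedness of $\mathscr G$ also ensures that $k$ is algebraically closed in $E$ and hence in $K$. Assuming $K\neq k$ (otherwise the conclusion is vacuous), the hypothesis $E^{\mathscr R}\cap K=k$ together with the fact that $E^{\mathscr R}/k$ is Picard-Vessiot gives linear disjointness of $K$ and $E^{\mathscr R}$ over $k$, and $KE^{\mathscr R}/K$ is a Picard-Vessiot extension whose Galois group is isomorphic, via restriction, to $\mathscr G(E^{\mathscr R}|k)$. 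Moreover, $KE^{\mathscr R}/E^{\mathscr R}$ is a Picard-Vessiot extension whose Galois group is a nontrivial quotient torus of $\mathscr R$, so by the standard structure theory of Picard-Vessiot extensions with torus Galois group (the multiplicative analogue of \cite[Theorem 4.4]{KRS24}) one has
$$KE^{\mathscr R}=E^{\mathscr R}(\eta_1,\dots,\eta_m),$$
where $\eta_1,\dots,\eta_m$ are algebraically independent over $E^{\mathscr R}$ and each $\eta_i'/\eta_i\in E^{\mathscr R}$.

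The main technical step is the following scalar-action claim: for every $\widetilde\sigma\in\mathscr G(KE^{\mathscr R}|K)$ and every $\eta$ in the multiplicative set $\mathcal A=\{\eta\in(KE^{\mathscr R})^*: \eta'/\eta\in E^{\mathscr R}\}$, one has $\widetilde\sigma(\eta)=c_{\widetilde\sigma,\eta}\,\eta$ for some $c_{\widetilde\sigma,\eta}\in E^{\mathscr R}$. I would prove this by observing that for each $\tau\in\mathscr R$ there is a character $\chi_\eta(\tau)\in C^*$ with $\tau(\eta)=\chi_\eta(\tau)\eta$, and by the centrality of $\mathscr R$ in $\mathscr G$ the identity $\tau\widetilde\sigma=\widetilde\sigma\tau$ gives
$$\tau(\widetilde\sigma(\eta))=\widetilde\sigma(\tau(\eta))=\chi_\eta(\tau)\,\widetilde\sigma(\eta),$$
so $\chi_{\widetilde\sigma(\eta)}=\chi_\eta$; hence $\widetilde\sigma(\eta)/\eta$ is fixed by every $\tau\in\mathscr R$ and therefore belongs to $E^{\mathscr R}$. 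In particular, the subfield $L_1:=E^{\mathscr R}(\eta_1)$ is a $\mathscr G(KE^{\mathscr R}|K)$-stable differential field on whose transcendental generator the group acts by scalars from $E^{\mathscr R}$.

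With these preparations, I apply Proposition \ref{Field-descent} with $M=E^{\mathscr R}$, $L=L_1$ and $\eta=\eta_1$ to obtain $\zeta=x\,\eta_1^{\ell}\in(K\setminus k)\cap L_1$ for some $x\in E^{\mathscr R}$ and positive integer $\ell$. Then
$$\frac{\zeta'}{\zeta}=\frac{x'}{x}+\ell\,\frac{\eta_1'}{\eta_1}\in E^{\mathscr R},$$
and since $\zeta\in K$ we conclude $\zeta'/\zeta\in K\cap E^{\mathscr R}=k$. Thus $\zeta$ is a nonzero solution over $k$ of the first-order operator $\partial-\zeta'/\zeta\in k[\partial]$, giving $\zeta\in T(K|k)\setminus k$ with $\zeta'/\zeta\in k$, as required. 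The main obstacle is the scalar-action claim, which is precisely where the centrality of the torus $\mathscr R$ is used essentially, together with the availability of the Kummer-type generators $\eta_i$ from torus Picard-Vessiot theory.
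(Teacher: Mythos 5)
Your proof is correct, and its overall skeleton coincides with the paper's: pass to $E^{\mathscr R}$, write $KE^{\mathscr R}=E^{\mathscr R}(\eta_1,\dots,\eta_m)$ with $\eta_i'/\eta_i\in E^{\mathscr R}$, show that $\mathscr G(KE^{\mathscr R}|K)$ acts on the $\eta_i$ by $E^{\mathscr R}$-scalars, apply Proposition \ref{Field-descent}, and take logarithmic derivatives into $K\cap E^{\mathscr R}=k$. The genuine difference lies in how the scalar-action step is justified. The paper first invokes Rosenlicht's theorem to write $\sigma(\eta_i)=f_{i\sigma}\eta_1^{\gamma_{1i\sigma}}\cdots\eta_m^{\gamma_{mi\sigma}}$ and then kills the exponent matrix by observing that the resulting homomorphism $\mathscr G(KE^{\mathscr R}|K)\to\mathrm{GL}_m(\mathbb Z)$ from a connected algebraic group into a discrete group must be trivial (citing \cite{KM79}); your argument instead exploits that the radical of a connected reductive group is a \emph{central} torus, so that conjugation preserves the $\mathscr R$-character $\chi_\eta$ of each $\eta\in\mathcal A$ and forces $\widetilde\sigma(\eta)/\eta\in E^{\mathscr R}$. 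Your route avoids both Rosenlicht's structure theorem and the connectedness-versus-discreteness argument, at the price of using centrality, which is exactly the reductivity hypothesis anyway; it also lets you work with a single generator $\eta_1$ rather than all $m$. One detail you should spell out: the commutation $\tau\widetilde\sigma=\widetilde\sigma\tau$ is only literal after you lift $\widetilde\sigma\in\mathscr G(KE^{\mathscr R}|K)$ to some $\hat\sigma\in\mathscr G(E|K)\subseteq\mathscr G$ (possible because $KE^{\mathscr R}$ is a Picard-Vessiot, hence $\mathscr G(E|K)$-stable, subextension of $E|K$, so restriction is surjective); only then does centrality of $\mathscr R$ in $\mathscr G$ apply, and one restricts back to $KE^{\mathscr R}$. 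This is routine and does not affect the validity of the argument.
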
     
	
	\begin{proof}
	Since $\mathscr G$ is connected and reductive, we have that $\mathscr R$ is a torus, $E$ is a Picard-Vessiot extension over $E^\mathscr R$ having $\mathscr R$ as its differential Galois group and $E^{\mathscr R}$ is algebraically closed in $E$. By Galois correspondence, it follows that  $KE^{\mathscr R}$ is a Picard-Vessiot extension of $E^\mathscr R$ and that $$KE^{\mathscr R}=E^{\mathscr R}(\eta_1,\dots, \eta_m),$$ where $\eta_1,\dots, \eta_m$ are  algebraically independent elements\footnote{Every intermediate differential field is Picard-Vessiot over $E^\mathscr R$ and are generated by exponentials of $E^{\mathscr R}$.} over $E^\mathscr R.$ Since $K\cap E^\mathscr R=k,$ as noted in Lemma \ref{unipotent-case}, the restriction morphism from $\mathscr G(KE^\mathscr R|K)$ to $\mathscr G(E^{\mathscr R}|k)$ is an isomorphism. The group $\mathscr G(KE^\mathscr R|K)$ is a (connected) semisimple algebraic group and every automorphism $\sigma\in \mathscr G(KE^\mathscr R|K),$ stabilizes the field $E^{\mathscr R}.$  Thus, for each $i=1,2,\dots, m$ and $\sigma \in \mathscr G(KE^\mathscr R|K),$ $\sigma(\eta_i)'/\sigma(\eta_i)\in E^{\mathscr R}.$ It is known that $T(E^\mathscr R(\eta_1,\dots, \eta_m)|k)=E^{\mathscr R}[\eta^{\pm1}_1, \dots, \eta^{\pm 1}_m]$ (\cite{Mag94}) and that for each $i=1,2,\dots, m,$ $\sigma(\eta_i)$ is a product of an element of $E^{\mathscr R}$ and a power product of $\eta_1,\dots, \eta_m$ (\cite[Theorem 1]{Ros-75}): $$\sigma(\eta_i)=f_{i\sigma}\eta^{\gamma_{1i\sigma}}_1\eta^{\gamma_{2i\sigma}}_2\cdots\eta^{\gamma_{mi\sigma}}_m,$$where  each $\gamma_{ji\sigma}$  is an integer and $f_{i\sigma}\in E^{\mathscr R}.$ Consider the map  \begin{equation*}
		\Phi: \mathscr{G}(KE^\mathscr R| K) \rightarrow \mathrm{GL}_m(\mathbb{Z}): \quad \sigma \longmapsto (\gamma_{ij\sigma})_{m\times m}.
	\end{equation*}
Since $\eta_1,\dots, \eta_m$ are algebraically independent,  it follows that for $\sigma, \tau\in \mathscr G(KE^{\mathscr R}| K),$ $$\gamma_{ij\sigma\tau}=\sum^m_{p=1}\gamma_{ip\tau}\gamma_{pj\sigma}.$$ Thus, the map $\Phi$ is a group homomorphism.  Since $\mathscr G(KE^{\mathscr R}| K)$ is a connected  linear algebraic group, $\Phi$ must be the trivial homomorphism (\cite[Theorem]{KM79}). Thus, for each $i=1,2,\dots,m $ and $\sigma\in \mathscr G(KE^{\mathscr U}| K),$ $$\sigma(\eta_i)=f_{i\sigma}\eta_i.$$

Now we shall apply Proposition (\ref{Field-descent}) and obtain an element for each $i=1,2,\dots, m,$ $x_i\eta^{l_i}_i\in K\setminus k,$ where $x_i\in E^{\mathscr R}$ and positive integers $l_i.$  Note that $$(x_i\eta^{l_i}_i)'/x_i\eta^{l_i}_i=x'_i/x_i +l_i(\eta'_i/\eta_i)\in E^{\mathscr R}\cap K=k.$$
\end{proof}

We require the next three propositions to deal with the differential subfields of Picard-Vessiot extensions whose differential Galois groups are semisimple algebraic groups. These results may also be of independent interest.
\begin{proposition} \label{algebraicsolution-irreduciblefield} Let $k\subset K\subseteq E$ be differential fields such that $C_E=C.$  Suppose that $\mathcal L\in k[\partial]$ is a monic operator such that for some $y\in E,$ $\mathcal L(y)=0,$ $y$ is nonalgebraic over $k$ and $y$ algebraic over $K.$  Then,  there is an element $x\in K$ such that $x$ is nonalgebraic over $k$ and that $\mathrm{sym}^d(\mathcal L)(x)=0,$ where  $\mathrm{sym}^d(\mathcal L)\in k[\partial]$ is the $d^{th}$ symmetric power of $\mathcal L$ for some integer $d\geq 1.$ 
\end{proposition}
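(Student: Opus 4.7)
My plan is to pass to an algebraic closure of $K$, consider the minimal polynomial of $y$ over $K$, and exploit the fact that the elementary symmetric functions of the $K$-conjugates of $y$ lie in $K$ and are annihilated by the appropriate symmetric power of $\mathcal{L}$.

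First I would fix an algebraic closure $\bar{K}$ of $K$ and extend the derivation. In characteristic zero this extension is unique, so every $K$-algebra automorphism of $\bar{K}$ commutes with the derivation and is therefore automatically a differential $K$-automorphism. Because $\mathcal{L}$ has coefficients in $k \subseteq K$, such automorphisms carry solutions of $\mathcal{L}$ to solutions of $\mathcal{L}$; in particular, every root of the minimal polynomial of $y$ over $K$ satisfies $\mathcal{L}$.

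Next, let $P(T) = T^d + a_{d-1} T^{d-1} + \cdots + a_0 \in K[T]$ be the minimal polynomial of $y$ over $K$, with roots $y = y_1, y_2, \ldots, y_d \in \bar{K}$, all of which are solutions of $\mathcal{L}$ by the previous step. Vieta's formulas give $a_{d-j} = (-1)^j e_j(y_1, \ldots, y_d)$, where $e_j$ is the $j$-th elementary symmetric polynomial. Since the solution space of $\mathrm{sym}^j(\mathcal{L})$ is spanned by the products $y_{i_1} y_{i_2} \cdots y_{i_j}$ of $j$ (not necessarily distinct) solutions of $\mathcal{L}$, each $e_j(y_1, \ldots, y_d)$ is annihilated by $\mathrm{sym}^j(\mathcal{L})$.

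Finally, at least one coefficient $a_i$ must be nonalgebraic over $k$: otherwise $y$ would satisfy a polynomial whose coefficients are algebraic over $k$, forcing $y$ to be algebraic over $k$ and contradicting the hypothesis. Choosing $j$ so that $a_{d-j}$ is nonalgebraic over $k$ and setting $x := (-1)^j a_{d-j} = e_j(y_1, \ldots, y_d) \in K$ yields an element that is nonalgebraic over $k$ and satisfies $\mathrm{sym}^j(\mathcal{L})(x) = 0$, as required. The only slightly delicate point in the argument is the compatibility between field-theoretic conjugation over $K$ and the differential structure, and this is precisely where the uniqueness of the derivation on algebraic extensions in characteristic zero is used; once that is in hand, the remainder is bookkeeping with Vieta's formulas and the defining property of symmetric powers.
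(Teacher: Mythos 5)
Your proposal is correct and follows essentially the same route as the paper: both pass to a splitting field (or algebraic closure) of $K$, use that $K$-automorphisms in characteristic zero are automatically differential so the conjugates of $y$ also satisfy $\mathcal L$, express the coefficients of the minimal polynomial as elementary symmetric functions of these conjugates so that the $j$-th coefficient is killed by $\mathrm{sym}^j(\mathcal L)$, and observe that some coefficient must be nonalgebraic over $k$ since otherwise $y$ would be. Your explicit justification that the automorphisms commute with the derivation (via uniqueness of the derivation extension) is a point the paper asserts without comment, but the argument is the same.
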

\begin{proof} Let $L$ be the splitting field of the monic irreducible polynomial of $y$ over $K$ and  $\mathscr{G}(L|K)=\{e=\sigma_1, \sigma_2,\dots, \sigma_n\}$ be the Galois group of $L$ over $K.$ Then the $K$-automorphism are also differential $K-$automorphisms. Thus, if $\mathcal L(y)=0$ for some $\mathcal L\in k[\partial]$ then  for each $i,$ $\mathcal L(\sigma_i(y))=0.$ Let $p(X)=X^n+a_{n-1}X^{n-1}+\cdots+a_0\in K[X]$  be the monic irreducible polynomial of $y$ over $K.$ Then for each $i=1,2,\dots, n,$ there are symmetric polynomials $s_i$ in $n$ variables such that $a_i=s_i(y,\sigma_2(y),\dots, \sigma_n(y)).$ Clearly, for each $i=1,2,\dots, n,$ $\mathrm{sym}^{i}(\mathcal L)(a_{n-i})=0.$ 
Finally, since $y$ is assumed to be nonalgebraic over $k,$ there exists an  $i, 1\leq i\leq n$ such that $x:=a_i$ is not algebraic over $k.$  
\end{proof}
\begin{proposition}\label{algebraicriccatisolution-irreduciblefield} Let $k\subseteq K\subseteq E$ be differential field such that $E$ is a Picard-Vessiot extension of $k,$ $\mathscr G:=\mathscr G(E|k)$ and $\mathscr G^0$ be a semisimple algebraic group.  Suppose that $\mathcal L\in k[\partial]$ be a monic operator such that for some nonzero $y\in E,$ $\mathcal L(y)=0,$ $y$ is not algebraic over $k,$  and that  $y'/y$ is algebraic over $K.$ Then, there is a nonzero element $x\in E$ such that  $x'/x$ is not algebraic over $k,$ $x'/x\in K$ and that $\mathrm{sym}^d(\mathcal L)(x)=0$ for some integer $d\geq 1.$ \end{proposition}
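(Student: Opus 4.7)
The plan is to build $x$ as a product of Galois conjugates of $y$ chosen so that their logarithmic derivatives exhaust the $K$-conjugates of $z := y'/y$. With that choice, $x'/x$ will equal the (additive) trace of the minimal polynomial of $z$ over $K$, which automatically lands in $K$, and $x$ will automatically satisfy a symmetric power of $\mathcal{L}$.

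As a preliminary I would verify that $z$ is itself transcendental over $k$. If $z \in \bar{k}^E := E^{\mathscr{G}^0}$, then $\sigma(z) = z$ for every $\sigma \in \mathscr{G}^0$, and a direct computation using $\sigma(y') = \sigma(y)'$ yields $(\sigma(y)/y)' = 0$, i.e. $\sigma(y)/y \in C$. Thus $\sigma \mapsto \sigma(y)/y$ is a morphism of algebraic groups $\mathscr{G}^0 \to \mathbb{G}_m$. Because $\mathscr{G}^0$ is semisimple this character must be trivial, forcing $y \in \bar{k}^E$ and contradicting the hypothesis on $y$.

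Next I produce the conjugates. Since $\bar{K}^E = E^{\mathscr{G}(E|K)^0}$ and the component group $\mathscr{G}(E|K)/\mathscr{G}(E|K)^0$ is finite, $\bar{K}^E$ is a finite Galois extension of $K$. Hence the minimal polynomial $p(X) = X^n + a_{n-1}X^{n-1} + \cdots + a_0 \in K[X]$ of $z$ splits completely in $\bar{K}^E \subseteq E$, with roots $z_1 = z, z_2, \ldots, z_n$. For each $i$, choose $\sigma_i \in \mathscr{G}(E|K)$ with $\sigma_i(z) = z_i$ and set $y_i := \sigma_i(y) \in E$. Then $\mathcal{L}(y_i) = \sigma_i(\mathcal{L}(y)) = 0$ and $y_i'/y_i = \sigma_i(z) = z_i$. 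Setting $x := y_1 y_2 \cdots y_n \in E$, one has $x'/x = \sum_i z_i = -a_{n-1} \in K$; and as a product of $n$ solutions of $\mathcal{L}$, the element $x$ satisfies $\mathrm{sym}^n(\mathcal{L})(x) = 0$.

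The principal obstacle is the remaining step: showing that $x'/x$ is not algebraic over $k$. Running the character argument of the preliminary step in the reverse direction, $x'/x \in \bar{k}^E$ would yield a character $\mathscr{G}^0 \to \mathbb{G}_m$, trivial by semisimplicity, and hence force $x \in \bar{k}^E$. The task is therefore to rule out $x = y_1 \cdots y_n$ lying in the trivial $\mathscr{G}^0$-subrepresentation of $\mathrm{Sym}^n V_y$, where $V_y := \mathrm{span}_C \mathscr{G} \cdot y \subseteq E$ is the cyclic $\mathscr{G}$-submodule generated by $y$. I expect this to be achievable by examining the $\mathscr{G}^0$-orbit structure on $V_y$ and using that $y \notin \bar{k}^E$ together with the explicit combinatorics of the coset representatives $\sigma_i$ of $\mathscr{G}(E|K)_z$ in $\mathscr{G}(E|K)$; the semisimplicity of $\mathscr{G}^0$ must be exploited more subtly here than merely in its ``no nontrivial characters'' incarnation.
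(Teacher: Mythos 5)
Your construction of $x$ is exactly the one the paper uses: split the minimal polynomial $p$ of $z=y'/y$ over $K$ inside $E$ (the paper takes a splitting field $L\subseteq E$ and lifts the elements of $\mathrm{Gal}(L|K)$ to differential automorphisms $\hat\sigma_i\in\mathscr G(E|K)$), set $x=\prod_i\hat\sigma_i(y)$, and read off $x'/x=-a_{n-1}\in K$ and $\mathrm{sym}^n(\mathcal L)(x)=0$. Your reduction of the last assertion to ``$x\notin k^0$'' via the character argument ($x'/x\in k^0\Rightarrow\sigma(x)/x\in C$ for $\sigma\in\mathscr G^0\Rightarrow$ trivial character of a semisimple group $\Rightarrow x\in k^0$) is also the paper's second claim. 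But the proof is not complete: the step you defer --- ruling out $x\in k^0$ --- is the actual content of the proposition, and you only state that you ``expect'' it to follow from the $\mathscr G^0$-orbit structure on $\mathrm{Sym}^nV_y$. That is a genuine gap, and it is not clear the representation-theoretic route you sketch closes it: a priori $\mathrm{Sym}^nV_y$ can contain trivial $\mathscr G^0$-subrepresentations even when $\mathscr G^0$ is semisimple, so you would still need an argument that the particular vector $y_1\cdots y_n$ is not invariant.

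The paper closes this step by a different, non-representation-theoretic device. If $x=y\,\hat\sigma_2(y)\cdots\hat\sigma_n(y)=\alpha\in k^0$, then since $T(E|k)$ is a $\mathscr G(E|k)$-stable $k$-algebra containing $k^0$, one gets $\alpha/y=\hat\sigma_2(y)\cdots\hat\sigma_n(y)\in T(E|k)$ and hence $1/y=(1/\alpha)(\alpha/y)\in T(E|k)$. So $y$ and $1/y$ both satisfy linear homogeneous equations over $k$, and the Harris--Sibuya theorem then forces $y'/y$ to be algebraic over $k$, i.e.\ $y'/y\in k^0$. This contradicts exactly what your preliminary step already established (no surjection $\mathscr G^0\to\mathrm G_m$, so $y'/y\notin k^0$ since $y\notin k^0$). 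In other words, you already have both endpoints of the argument; what is missing is the bridge from ``$x\in k^0$'' to ``$y'/y\in k^0$'', which the paper supplies via the ring structure of $T(E|k)$ together with Harris--Sibuya. With that lemma inserted, your proof is complete and coincides with the paper's.
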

\begin{proof}
Let $P(X)=X^n+a_{n-1}X^{n-1}+\cdots+a_0\in K[X]$  be the monic irreducible polynomial of $y'/y$ over $K.$ Since $E$ is also a Picard-Vessiot extension of $K$, it follows that $E$ contains a splitting field $L$ of $p$. Let $\mathscr G(L|K)=\{e=\sigma_1, \sigma_2,\dots, \sigma_n\}$ be the ordinary Galois group of $L$ over $K.$ Since, the restriction map $\mathscr G(E|K)\to \mathscr G(L|K)$ is surjective, there are differential automorphisms $e, \hat{\sigma}_2, \dots, \hat{\sigma}_n\in \mathscr G(E|K)$ whose restrictions are $e, \sigma_2,\dots, \sigma_n,$ respectively.
Now, $$a_{n-1}=\sum^n_{i=1}\sigma_i(y'/y)=\frac{x'}{x},$$ where $x=\prod^n_{i=1}\hat{\sigma}_i(y).$ Observe that $\mathrm{sym}^n(\mathcal L)(x)=0$ and that $x'/x\in K.$ Let $k^0$ be the algebraic closure of $k$ in $E.$ We claim that $x\notin k^0$ and that $x'/x\notin k^0,$ which then proves the proposition. First, we suppose on the contrary that $x\in k^0.$  Then, we have $y\hat{\sigma}_1(y)\cdots\hat{\sigma}_n(y)=:\alpha\in k^0.$ Since $T(E|k)$ is a $\mathscr G(E|k)-$stable $k-$algebra, we have $\hat{\sigma}_1(y)\cdots\hat{\sigma}_n(y)\in T(E|k).$ Therefore $\alpha/y\in T(E|k).$ Every element of $k^0$ is a solution of some linear homogeneous differential equation over $k$ and therefore $1/y\in T(E|k).$ That is, $y$ and $1/y$ are solutions of some linear homogeneous differential equation over $k.$ From \cite[Theorem]{Har-Sib}, it then follows that $y'/y\in k^0.$ Since, by assumption, $y\notin k^0,$ the differential field $k^0(y)$ is a Picard-Vessiot extension of $k^0$ having a differential Galois group isomorphic to $\mathrm{G}_m.$ 
This in turn implies that there is a surjective morphism of algebraic groups $\mathscr G^0\to \mathrm{G}_m.$ Now since $\mathscr G^0$ is a semisimple algebraic group, we have obtained a contradiction. Thus, $x\notin k^0.$ Now, if $x'/x\in k^0$ then $k^0(x)$ is a Picard-Vessiot extension of $k^0$ having a differential Galois group isomorphic to $\mathrm{G}_m,$ which again contradicts the fact that there are no nontrivial morphisms from $\mathscr G^0$ to $\mathrm{G}_m.$ This proves our claim.  
\end{proof}

\begin{proposition}\label{reductiontosimplealgebraicgroup} Let $E$ be a Picard-Vessiot extension of $k,$ $\mathscr G:=\mathscr G(E|k)$ be a (connected) simple algebraic group, $K$ be a differential field intermediate to $k$ and $E.$ Suppose that $K=E^\mathscr P,$ where $\mathscr P$ is a parabolic subgroup of $\mathscr G.$ Then $k\subset K\subset \tilde{E}\subset E$  such that $\tilde{E}$ is a Picard-Vessiot extension of $k,$ $\mathscr G(\tilde{E}|k)$ is isogenous to $\mathscr G(E|k)$ and there is a finite dimensional $C-$vector space $U\subset \tilde{E}$ such that $U$ is a faithful and an irreducible $\mathscr G(\tilde{E}|k)-$module. \end{proposition}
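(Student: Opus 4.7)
Let $\mathscr N := \bigcap_{g \in \mathscr G(C)} g\mathscr P g^{-1}$, which is a closed normal subgroup of $\mathscr G$ coinciding with the kernel of the left-translation action of $\mathscr G$ on the projective homogeneous variety $\mathscr G/\mathscr P$. Assuming the nontrivial case $\mathscr P \subsetneq \mathscr G$ (otherwise $K = k$ and there is nothing to prove), simplicity of $\mathscr G$ forces $\mathscr N$, being a proper closed normal subgroup, to be finite; a finite normal subgroup of a connected algebraic group is automatically central, so $\mathscr N \subseteq Z(\mathscr G)$. Conversely, $Z(\mathscr G)$ lies in every maximal torus, hence in every Borel, hence in every parabolic; being conjugation-invariant it lies in every $g\mathscr P g^{-1}$, which gives $Z(\mathscr G) \subseteq \mathscr N$. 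Thus $\mathscr N = Z(\mathscr G)$. Setting $\tilde E := E^{\mathscr N}$ and invoking the Galois correspondence, $\tilde E$ is a Picard-Vessiot extension of $k$ with $\mathscr G(\tilde E|k) \cong \mathscr G/\mathscr N$, which is isogenous to $\mathscr G$ via the finite-kernel quotient map; and $\mathscr N \subseteq \mathscr P$ yields $K = E^{\mathscr P} \subseteq E^{\mathscr N} = \tilde E$.

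Next, the plan is to realize a faithful irreducible $\mathscr G(\tilde E|k)$-module inside $\tilde E$ as a $C$-subspace. A natural choice is $V = \mathrm{Lie}(\mathscr G)$, the adjoint representation: since $\mathscr G$ is simple, $V$ is irreducible as a $\mathscr G$-module, and the kernel of the adjoint action is exactly $Z(\mathscr G) = \mathscr N$, so $V$ descends to a faithful irreducible representation of $\mathscr G/\mathscr N$. (Alternatively, the Borel-Weil theorem, applied to a sufficiently ample $\mathscr G$-linearized line bundle on $\mathscr G/\mathscr P$, produces an irreducible $\mathscr G$-module $V$ giving an equivariant closed embedding $\mathscr G/\mathscr P \hookrightarrow \mathbb P(V)$ with $\mathscr N \subseteq \ker(\mathscr G \to \mathrm{GL}(V))$.) I then invoke the standard Picard-Vessiot fact that the PV ring $T(E|k)$ contains a $\mathscr G$-equivariant $C$-linear copy of every finite-dimensional algebraic $\mathscr G$-module, and embed $V$ as a $\mathscr G$-stable $C$-subspace $U \subseteq T(E|k) \subseteq E$. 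Since $\mathscr N$ acts trivially on $V$ by construction, $U \subseteq E^{\mathscr N} = \tilde E$, and $U$ is visibly a faithful irreducible $\mathscr G(\tilde E|k) = \mathscr G/\mathscr N$-module, which completes the proof.

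The main technical hurdle is justifying the embedding $V \hookrightarrow T(E|k)$ as a $C$-subspace. This rests on the PV-torsor isomorphism $T(E|k) \otimes_k \bar k \cong \bar k \otimes_C C[\mathscr G]$ combined with the algebraic Peter-Weyl decomposition $C[\mathscr G] \cong \bigoplus_{V \text{ irr.}} V \otimes_C V^*$ (as a left $\mathscr G$-module, with $V^*$ playing the role of multiplicity space). Taking $\mathrm{Gal}(\bar k/k)$-fixed vectors in the $V$-isotypic component exhibits $V \otimes_C V^* \otimes_C k$ as the $V$-isotypic piece of $T(E|k)$, so any nonzero functional in $V^* \otimes_C k$ determines a $C$-linear, $\mathscr G$-equivariant injection $V \hookrightarrow T(E|k)$. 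Once this descent step is in hand, the remainder of the argument is formal Galois correspondence together with the elementary observation that $\mathscr N$-invariants absorb the chosen copy of $V$.
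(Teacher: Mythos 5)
Your argument is correct, but it takes a genuinely different route from the paper's. The paper exploits the fact that a faithful $\mathscr G$-module is already sitting inside $E$ (the solution space $V$ of the defining operator): it picks an irreducible constituent $U\subseteq V$ on which $\mathscr G$ acts nontrivially, lets $\phi:\mathscr G\to\mathrm{GL}(U)$ be that representation, sets $\tilde E:=E^{\ker\phi}$, and is done once it checks that $\ker\phi$, being finite and normal in a connected group, is central and hence lies in the self-normalizing subgroup $\mathscr P$. You instead fix the subgroup first, identifying $\bigcap_g g\mathscr Pg^{-1}$ with $Z(\mathscr G)$ and setting $\tilde E:=E^{Z(\mathscr G)}$, and then choose an abstract faithful irreducible module (the adjoint representation) which you must subsequently realize inside $T(E|k)$; this forces you through the torsor trivialization $T(E|k)\otimes_k\bar k\cong\bar k\otimes_C C[\mathscr G]$, algebraic Peter--Weyl, and a descent argument for the isotypic multiplicity space. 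That realization step is the only delicate point and it does go through (the multiplicity space of $V$ in $T(E|k)$ is a $k$-form of $V^*\otimes_C\bar k$, hence nonzero, which is all you need --- though it is not canonically $V^*\otimes_C k$ as you write), but it is considerably heavier machinery than the paper requires, since the paper's module costs nothing to locate inside $E$. What your route buys in exchange is a canonical and maximal choice of $\tilde E$ (the fixed field of the full center, independent of any choice of constituent) and, as a by-product, the general fact that every irreducible rational $\mathscr G$-module occurs as a $\mathscr G$-stable $C$-subspace of $T(E|k)$, which is of independent interest. Two small points worth tightening: you should note, as the paper implicitly must, that the adjoint representation is irreducible because $\mathscr G$ is connected so its $\mathscr G$-submodules are ideals of the simple Lie algebra; and the degenerate case $\mathscr P=\mathscr G$ is excluded by the hypothesis $k\subset K$ rather than being vacuous.
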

	
\begin{proof} As $G$ is a differential Galois group of a Picard-Vessiot extension, there exists a faithful $\mathscr G-$module $V\subset E.$ If $V$ is irreducible the proposition is easily proved by taking $\tilde{E}=E.$ Therefore, we shall assume $V$ is reducible. As $\mathscr G$ is semisimple algebraic group, we choose an irreducible $\mathscr G-$submodule $U.$ Note that $U$ may not be a faithful $\mathscr G-$module.  However, if the representation is given by $\phi: G\to \mathrm{GL}(U)$ then $U$ is a faithful and an irreducible $G/\mathrm{ker}(\phi)-$module.  Since $\mathscr G$ is simple,  $\mathrm{ker}\phi$ is  a finite normal subgroup of $\mathscr G.$ Let $\tilde{E}:=E^{\mathrm{ker}(\phi)}$ and observe that $\tilde{E}$ is a Picard-Vessiot extension of $k,$ $U\subset \tilde{E}$ and $\mathscr G(\tilde{E}|k)$ is isomorphic to the factor group $G/\mathrm{ker}.$ Now, we only need to show that $K\subset \tilde{E}.$
	
Since $\mathscr G$ is connected, $\mathscr N$ is contained in the center of $\mathscr G.$ Furthermore, since $\mathscr P$ is parabolic, the normalizer of $\mathscr P$ is itself and we obtain that $\mathrm{ker}(\phi)\subset \mathscr H.$ This shows that $K\subset E^{\mathrm{ker}(\phi)}=\tilde{E}.$\end{proof}

\begin{lemma}\label{semisimple-case}
		Let $E$ be a Picard-Vessiot extension of $k,$ with $\mathscr{G}:=\mathscr G(E|k)$ being a (connected) semisimple algebraic group. Let $k\subset K\subseteq E$ be a differential field. Then, there is an element $\zeta\in T(E|k)\setminus k$ such that $k\subset k\langle \zeta'/\zeta\rangle\subseteq K$ and that $k\subset k\langle \zeta'/\zeta\rangle\subseteq E_1\subseteq E,$ where  $E_1$ is a Picard-Vessiot extension of $k$ whose differential Galois group is a semisimple algebraic group of rank at most $\mathrm{tr.deg}(K|k).$
\end{lemma}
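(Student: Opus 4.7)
My plan is to apply Chevalley's theorem to the proper closed subgroup $\mathscr H:=\mathscr G(E|K)$ of $\mathscr G$ (proper because $K\neq k$), and take $E_1$ to be the Picard-Vessiot subextension generated by the $\mathscr G$-orbit of the Chevalley vector inside $E$. Chevalley's theorem provides a finite-dimensional rational $\mathscr G$-module $V$ over $C$ and a nonzero $v_0\in V$ with $\mathscr H=\mathrm{Stab}_{\mathscr G}([v_0])$. Since every finite-dimensional rational $\mathscr G$-module appears as a $\mathscr G$-stable $C$-subspace of $T(E|k)$ (via tensor, symmetric, and dual constructions applied to a defining solution space of $E$), I may realize $V\subset T(E|k)\subset E$ and set $\zeta:=v_0$. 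Then $\zeta\in T(E|k)$ automatically, being an element of a finite-dimensional $\mathscr G$-stable subspace.

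The next step is to verify $\zeta'/\zeta\in K\setminus k$. Since $\mathscr H$ fixes the line $\langle\zeta\rangle$, a character $\chi:\mathscr H\to C^{\times}$ satisfies $\sigma(\zeta)=\chi(\sigma)\zeta$ for every $\sigma\in\mathscr H$, so $\sigma(\zeta'/\zeta)=\zeta'/\zeta$ and $\zeta'/\zeta\in E^{\mathscr H}=K$. If $\zeta'/\zeta$ were in $k$, then $\langle\zeta\rangle$ would be the full solution space of a first order linear equation over $k$, hence $\mathscr G$-stable; as $\mathscr G$ is semisimple and thus has no nontrivial characters, this would force $\zeta\in E^{\mathscr G}=k$, contradicting $\mathscr G\subseteq\mathrm{Stab}_{\mathscr G}([\zeta])=\mathscr H\subsetneq\mathscr G$. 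Now set $V_{\zeta}:=\mathrm{span}_C\{\sigma\zeta:\sigma\in\mathscr G\}\subseteq V$ and $E_1:=k\langle V_{\zeta}\rangle$, a Picard-Vessiot extension of $k$ with $\mathscr G(E_1|k)\cong\mathscr G/N$, where $N=\ker(\mathscr G\to\mathrm{GL}(V_{\zeta}))$. Plainly $k\langle\zeta'/\zeta\rangle\subseteq E_1\subseteq E$, and $\mathscr G/N$ is semisimple since $N$ is closed and normal in the semisimple group $\mathscr G$. Because $N$ fixes $\zeta$ pointwise it fixes $[\zeta]$, so $N\subseteq\mathscr H$; consequently $\mathscr H/N$ is a proper closed subgroup of $\mathscr G/N$ with codimension $\dim(\mathscr G/\mathscr H)=\mathrm{tr.deg}(K|k)$.

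The principal obstacle is the remaining rank estimate $\mathrm{rank}(\mathscr G/N)\leq \mathrm{tr.deg}(K|k)$. By construction $\mathscr G/N$ acts \emph{faithfully} on $V_{\zeta}$, and $V_{\zeta}$ is cyclically generated by $\zeta$ as a $(\mathscr G/N)$-module, so the $(\mathscr G/N)$-orbit of $[\zeta]$ in $\mathbb{P}(V_{\zeta})$ is precisely $(\mathscr G/N)/(\mathscr H/N)$, of dimension exactly $\mathrm{tr.deg}(K|k)$. I expect this to yield the desired rank inequality by invoking the structural fact that a semisimple algebraic group admitting a faithful cyclic representation whose generating projective orbit has dimension $d$ must have rank at most $d$---a statement in the spirit of the classification of semisimple algebraic groups with low-dimensional homogeneous spaces used later in the paper via \cite{CX18,Pop73,Pop75}. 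This final rank bound is where the bulk of the technical work is concentrated; if a direct appeal to the classification is awkward, an alternative is to analyze a maximal torus of $\mathscr G/N$ acting on the orbit and to bound its dimension by the orbit dimension, but either way this step is the main hurdle.
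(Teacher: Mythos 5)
Your construction of $\zeta$ is a genuinely different and attractive route: applying Chevalley's theorem to $\mathscr H=\mathscr G(E|K)$ and realizing the resulting module inside $T(E|k)$ produces a semi-invariant $\zeta$ with $\sigma(\zeta)=\chi(\sigma)\zeta$ for $\sigma\in\mathscr H$, whence $\zeta'/\zeta\in E^{\mathscr H}=K$, and the semisimplicity of $\mathscr G$ rules out $\zeta'/\zeta\in k$. This cleanly replaces the paper's longer path, which first passes to a $k$-irreducible intermediate field $M^0\subseteq K^0$ so that $\mathscr G(E|M^0)$ is a \emph{maximal} connected subgroup, invokes the dichotomy that such a subgroup is either semisimple (then uses observability and Andr\'e's theorem) or parabolic (then uses a partial flag and a Wronskian computation), and finally descends to $K$ via Propositions \ref{algebraicsolution-irreduciblefield} and \ref{algebraicriccatisolution-irreduciblefield}. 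Up to the routine verification that every rational $\mathscr G$-module embeds $\mathscr G$-equivariantly in $T(E|k)$, your first two paragraphs are correct.

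The genuine gap is the rank bound, and it is not a finishing detail but the crux of the lemma. Your reduction lands on an \emph{arbitrary} proper closed subgroup $\mathscr H/N$ of $\mathscr G/N$ containing no positive-dimensional normal subgroup, and you assert that for such a ``core-free'' subgroup one has $\mathrm{rank}(\mathscr G/N)\leq\mathrm{codim}(\mathscr H/N)$. Neither of the results the paper relies on gives this: Popov's theorem (\cite[Corollary 1]{Pop75}) applies only when the stabilizer is reductive (affine homogeneous space), and \cite[Remark 8.3]{CX18} applies only to parabolic subgroups of \emph{simple} groups. To reduce to these cases you would pass to a maximal connected subgroup containing $\mathscr H$ --- but that maximal subgroup need not be core-free (it may contain simple factors of $\mathscr G$), which destroys the inequality for the full rank of $\mathscr G/N$; this is exactly why the paper, in the parabolic case, shrinks $E$ to $E_1=E^{\mathscr G_2\cdots\mathscr G_s}$ so that only the rank of the single simple factor $\mathscr G_1$ must be controlled. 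For a semisimple group with several factors, proving the core-free bound requires in addition a Goursat-type analysis of subdirect products (to handle subgroups, such as diagonals, that surject onto every simple factor), and none of this is supplied or cited in your proposal. As written, the statement ``a semisimple group with a faithful cyclic representation whose generating projective orbit has dimension $d$ has rank at most $d$'' is an unproved claim that carries the entire weight of the lemma's quantitative conclusion.
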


\begin{proof}
	First we claim that  $K^0,$ the algebraic closure of $K$ in $E,$ contains a differential field $M$ such that $M^0,$  the algebraic closure of $M$ in $E,$ is irreducible. To see this, we shall use an induction on $\mathrm{tr.deg}(K| k).$ Clearly, $K^0$ itself is irreducible if $\mathrm{tr.deg}(K| k)=1.$ Suppose that $\mathrm{tr.deg}(K| k)\geq 2$ and  that $K^0$ is reducible. Then we have $k\subset K_1\subset K^0$ such that $1<\mathrm{tr.deg}(K_1|k)<\mathrm{tr.deg}(K^0| k).$ Therefore, by induction, we may assume  $K_1$  has a differential subfield $M$ such that $M^0$ is irreducible. Since $M$ is also a subfield of $K^0,$  we have proved the claim. Thus  there is a differential subfield $M,$ $k\subset M\subseteq M^0\subseteq K^0,$ where $M^0, K^0$ are the algebraic closures of $M$ and $K$ in $E$ respectively, such that $M^0$ is irreducible.  
    
    If $\mathscr H$ is a closed connected subgroup of $\mathscr G$ properly containing $\mathscr G(E|M^0)$ then $E^\mathscr H$ is a differential subfield of $M^0$ such that $\mathrm{tr.deg}(E^\mathscr H|k)<\mathrm{tr.deg}(M^0|k).$ Thus, the irreducibility of $M^0$ along with the Galois correspondence implies that $\mathscr G(E|M^0)$ is maximal among all connected subgroups of $\mathscr G.$ Therefore, $\mathscr G(E|M^0)$ is either a semisimple algebraic group or $\mathscr G(E|M^0)$ is a parabolic subgroup of $\mathscr G$ (\cite[Lemma 2.1]{Lie-Don-04}).
	
	Let $\mathscr H:=\mathscr G(E|M^0).$ Suppose that $\mathscr H$ is a semisimple group. Then  $\mathscr H$ is observable in $\mathscr G$ (\cite[Corollary 4.6]{Gro97}) and therefore $M^0=k\langle y\rangle$ for some $y\in T(M^0|k)\setminus k$ (\cite[Theorem 1.2.2]{And14}). Since $\mathscr G$ is semisimple, there is no nontrivial morphism from $\mathscr G$ to the multiplicative group  $\mathrm{G}_m$ and therefore, $y'/y\notin k.$ Therefore, $k\langle y'/y\rangle\subseteq M^0$ must be an algebraic extension.  Since $\mathscr H$ is reductive, the homogeneous space $\mathscr G/\mathscr H$ is an irreducible affine variety. A result of V.L. Popov (\cite[Corollary 1]{Pop75}), rank of $\mathscr G$ is at most $\mathrm{codim}(\mathscr H)-1$.  Therefore, from Galois correspondence, we obtain that rank of $\mathscr G$ is at most $\mathrm{tr.deg}(M^0|k)-1.$ Finally, since $K^0\supseteq M^0,$ we obtain that $y'/y$ is algebraic over $K.$ Now we apply  Proposition \ref{algebraicriccatisolution-irreduciblefield} and obtain an element $\zeta\in T(E|k)\setminus k$ such that $\zeta'/\zeta\in K\setminus k.$  Since $M^0\subseteq K^0,$ we have $$\mathrm{rank} (\mathscr G)\leq \mathrm{tr.deg}(M^0|k)-1\leq  \mathrm{tr.deg}(K|k).$$
 	
	Suppose that $\mathscr H$ is a parabolic subgroup of $\mathscr G.$  Let $\mathscr G=\mathscr G_1\mathscr G_2\cdots\mathscr G_s$ be the factorization of $\mathscr G$ in terms of its simple components. Then since $\mathscr H$ is a maximal parabolic of $\mathscr G$, there is a simple component , say $\mathscr G_1,$ of $\mathscr G$ such that $\mathscr H$ is a (maximal) parabolic subgroup of $\mathscr G_1$ (\cite[Page 87-88]{Hum78}).  Let $E_1=E^{\mathscr G_2\mathscr G_3\cdots\mathscr G_s}.$ Then $E_1$ is a Picard-Vessiot extension of $k$ whose differential Galois group is $\mathscr G_1$ and since $\mathscr H\subset \mathscr G_1,$ we have $M^0\subset E_1.$ Since $\mathscr G_1$ is simple, it is known that rank of $\mathscr G_1$ is at most $\mathrm{dim}(\mathscr G_1/\mathscr H)$ (\cite[Remark 8.3]{CX18}). Therefore by Galois correspondence, $$\mathrm{rank}(\mathscr G_1)\leq \mathrm{tr.deg}(M^0|k)\leq \mathrm{tr.deg}(K|k).$$
	
	We claim that there is an element $y\in T(E_1|k)$ such that $y\notin k$ and $k\subset k(y'/y)\subseteq M^0.$ 
	Let $V\subset E$ be a finite dimensional $C-$vector space as well as a faithful $\mathscr G_1-$module. Then, from Proposition \ref{reductiontosimplealgebraicgroup}, we shall assume that $V$ is also an irreducible $\mathscr G_1-$module. We have a monomorphism of algebraic groups $\mathscr G_1\hookrightarrow  \mathrm{GL}(V).$ Since parabolic subgroups of  reductive groups are determined by cocharacters (\cite[Proposition 8.4.5]{Sprin-98}), every parabolic subgroup of $\mathscr G_1$ is a pullback of a parabolic subgroup of $\mathrm{GL}(V).$  It is known that parabolic subgroups of $\mathrm{GL}(V)$ stabilizes  partial flags and therefore, there is  a proper nonzero subspace $W$ of $V$ such that $$0\subset W\subset V$$ is a (partial) flag for  $\mathscr H.$  Let $w_1,\dots,w_m$ be a $C-$basis of $W$ and $\omega$ be its Wronskian. Then $\omega\in T(E_1|k).$   Let $\mathcal L_W\in M^0[\partial]$ be the monic differential operator whose full set of solutions is $W.$ Then $$\mathcal L_W=\partial^m+\frac{a_{m-1}}{\omega}\partial^{m-1}+\dots+\frac{a_0}{\omega},$$ where each $a_i\in T(E_1|k).$ Note that  $\omega'/\omega=-a_{m-1}/\omega\in M^0.$ Thus, if $\omega'/\omega\notin k$ then we shall prove the claim by taking $y:=\omega$ and  observing that 
	$$k\subset k\langle y'/y\rangle\subseteq M^0\subseteq E_1\subseteq E, \quad \text{for some } \ y\in T(E_1|k).$$
	
Therefore, we may suppose that $\omega'/\omega\in k.$ Since $k$ is algebraically closed in $E_1$ and that $\mathscr G_1$ is simple, there is no surjective morphism from $\mathscr G_1$ to $\mathrm{G}_m.$ This in turn implies $\omega\in k.$ Since $V$ is irreducible, we know that  $W$ is not a $\mathscr G_1-$module and therefore, there is an $i,$ $0\leq i\leq m-1$ such that $a_i/\omega\notin k.$ That is,  $y:=a_i\notin k.$  Thus, we have  $$k\subset k\langle y'/y\rangle\subseteq k\langle y\rangle\subseteq M^0\subseteq E_1\subseteq E, \quad \text{for some } \ y\in T(E_1|k)$$ and this proves our claim. 

Finally, since $M^0\subseteq K^0,$ we observe that the element $y'/y$ is algebraic over $K$ and that $y$ is not algebraic over $k.$ Now, by  Proposition \ref{algebraicriccatisolution-irreduciblefield}, we obtain a nonzero element $\zeta\in T(E_1|k)$ such that $k\subset k(\zeta'/\zeta)\subseteq K.$
\end{proof}

\begin{remark}\label{productofclassicalgroups} Let $E$ be a Picard-Vessiot extension of $k$ and $\mathscr G:=\mathscr G(E|k)$ be a semisimple algebraic group such that all its  components are classical simple algebraic groups.  Let $K$ be a $k-$irreducible differential subfield of $E.$ From Lemma \ref{semisimple-case}, it follows  that the $r:=\mathrm{rank}(\mathscr G)\leq \mathrm{tr.deg}(K|k).$ Let $\mathscr G=\mathscr G_1\cdots \mathscr G_n$ be the decomposition of $\mathscr G$ into simple components and $r_i:=\mathrm{rank}(\mathscr G_i).$ Since $\mathscr G_i$ are assumed to be classical simple groups, $\mathrm{dim}(\mathscr G_i)\leq (r_i+1)^2-1$ for each $i=1,2,\dots,n$ and since $\sum^n_{i=1}r_i=r,$ we obtain that $$\mathrm{tr.deg}(E|k)=\mathrm{dim}(\mathscr G)\leq -n+\sum^n_{i=1}(r_i+1)^2 \leq (r+1)^2-1=r(r+2).$$ This in turn gives us bounds for the transcendence degree  of $k-$irreducible differential subfields of $E$ in terms of $\mathrm{rank}(\mathscr G):$
\begin{equation}\label{bound-r(r+2)}r\leq \mathrm{tr.deg}(K|k)\leq r(r+2).\end{equation}
\end{remark}

Now we are in a position to prove Theorem \ref{intromain}.

\begin{proof}[Proof of Theorem \ref{intromain}] If $K$ is an algebraic extension of $k$ then the theorem is proved by taking $y=1.$ Therefore, we shall assume that $\mathrm{tr.deg}(K|k)\geq 1.$ If $k_1$ is the algebraic closure of $k$ in $K_1$ then since $T(K|k_1)=T(K|k),$ we may assume that $k_1=k,$ that is $k$ algebraically closed in $K.$
	
Let $\mathscr R$ and $\mathscr U$ be the radical and the unipotent radical of $\mathscr G(E|k).$	First we observe that for any connected closed subgroup $\mathscr H$ of $\mathscr G(E|k),$ if  $E^\mathscr H\cap K\neq k$ then since $K$ is irreducible, the field $K$ must be an algebraic extension of $E^\mathscr H\cap K.$ Now since $\mathscr H$ is connected, we obtain that  $K\subseteq E^\mathscr H.$  Taking $\mathscr H$ to be $\mathscr R$ and $\mathscr U,$ we arrive at the following possibilities:
\begin{enumerate}[(a)]\item $K\cap E^\mathscr U=k,$\\
	\item  $K\subseteq E^{U}$ and $K\cap E^{\mathscr R}=k,$\\
\item $K\subseteq E^\mathscr R.$\end{enumerate}

Suppose that $K\cap E^\mathscr U=k.$ Then, from  Lemma \ref{unipotent-case}, we obtain a element $\zeta\in T(K|k)$ not algebraic over $k$ and a monic $\mathcal L\in k[\partial]$ such that $\mathcal L(\zeta)=b\in k\setminus \{0\}.$ Now since $\zeta$ is transcendental over $k,$ it is easily seen that $\zeta'/\zeta\notin k$ and we obtain that  $K$ is an algebraic extension of 
$k\langle \zeta'/\zeta\rangle.$ Now suppose that $K\subseteq E^\mathscr U$ and $K\cap E^{\mathscr R}=k.$  Then since $\mathscr G(KE^\mathscr R|K)\cong \mathscr G(E^\mathscr R|k),$ we obtain $E^{\mathscr R}$ and $K$ are free over $k.$ Now, the assumption that $k$ is algebraically closed in $K$ implies that $K$ and $E^{\mathscr R}$ are linearly disjoint over $k.$ This implies, $k^0K$ and $E^{\mathscr R}$ are linearly disjoint over $k^0$ and in particular, $k^0K\cap E^{\mathscr R}=k^0.$ Apply Lemma \ref{torus-case} and obtain an element $y\in k^0K\setminus k^0$ such that $y'/y\in k^0.$  Observe that $y\in T(k^0K|k^0)$ and from Proposition \ref{solnoveralb.extn} that $T(k^0K|k^0)=T(k^0K|k).$ Therefore, $y\in T(k^0K|k)\subseteq T(E^\mathscr R| k)$  and that $y$ is algebraic over $K.$ Now from Proposition \ref{algebraicsolution-irreduciblefield}, we obtain an element $\zeta\in T(K|k)\setminus k.$ If $\zeta'/\zeta\notin k$ then $K$ is an algebraic extension of $k\langle \zeta'/\zeta\rangle$ and if $\zeta'/\zeta\in k$ then we simply replace $\zeta$ by $\zeta+\alpha$ for some nonzero $\alpha\in k$ and observe that $\zeta\in T(K|k)\setminus k,$ $\zeta'/\zeta\notin k$ and that  $K$ is an algebraic extension of the differential field $k(\zeta'/\zeta).$ 

Finally, suppose that $K\subseteq E^{\mathscr R}.$ Apply Lemma \ref{semisimple-case} for $k^0\subseteq k^0K\subseteq E^\mathscr R$ and obtain a Picard-Vessiot subextension $E_1$ of $k^0$ and a nonzero element $y\in T(E_1|k^0)$ such that $$k^0\subset k^0\langle y'/y\rangle\subseteq k^0K\ \text{and that }\  k^0\subset k^0\langle y'/y\rangle \subseteq E_1.$$  By Proposition \ref{solnoveralb.extn}, $T(E_1|k^0)=T(E_1|k).$ Thus $y\in T(E_1|k),$  $y$ is not algebraic over $k$ and that $y'/y$ is algebraic over $K.$  Now, from Proposition \ref{algebraicriccatisolution-irreduciblefield}, we obtain a nonzero element $\zeta\in T(E_1|k)$ such that $\zeta'/\zeta\in K\setminus k.$ Since $K$ is $k-$irreducible, it must be an algebraic extension of  $k\langle \zeta'/\zeta\rangle.$ This completes the proof of the theorem.\end{proof}

\begin{remark}\label{obv-intromain}
    Let $E$ be a Picard-Vessiot extension of $k$ with a connected Galois group $\mathscr G (E|k)$. Let $K$ be a relatively algebraically closed $k-$irreducible differential subfield of $E$ and $\mathrm{tr. deg}(K|k)=n.$ Then,  from the proof of Theorem \ref{intromain}, we observe that one of the following statements hold:
    \begin{enumerate}[(i)]
        \item \label{from-unipotent} There is an element $y\in K$ such that $\mathcal L(y)=b$ for some  $\mathcal L\in k[\partial]$ and $b\in k$  with $\mathrm{ord}(\mathcal L)=n$ and that  $K$ is an algebraic extension of $k\langle y\rangle.$
        \item\label{from-torus} $\mathrm{tr. deg}(K|k)=1$ and there is an nonzero element $z\in K$ such that $K$ is an algebraic extension of $k(z)$ with  $z'/z\in k.$\\
        \item \label{from-semisimple} $K$ is a differential subfield of a Picard-Vessiot extension $F$ of $k$ with $\mathscr G(F|k)$ a semisimple linear algebraic group of rank $\leq n-1$  and $K=F^\mathscr{H}$ for a maximal semisimple subgroup $\mathscr H$ of $\mathscr G(F|k).$ \\
        \item  \label{from-parabolic} $K$ is a differential subfield of a Picard-Vessiot extension $L$ of $k$ with $\mathscr G(L|k)$ a simple linear algebraic group of rank $\leq n$ and that $K=L^\mathscr{P}$  for a maximal parabolic subgroup $\mathscr P$ of $\mathscr G(L|k).$ 
    \end{enumerate} 
\end{remark}

\begin{remark}
    
Let $E$ be a Picard-Vessiot extension of $k$ and let $\mathscr G$ be the  differential Galois group of $E$ over  $k$. Then from  Theorem \ref{intromain}, every differential field intermediate to $k$ and $E$ contains  a $k-$irreducible differential field $K:=k\langle w\rangle,$ where $w$ is a solution of a Riccati differential equation  $\mathcal R(w,w',\dots,w^{(n-1)})=0.$  For any $\sigma\in \mathscr G,$ the differential field $\sigma(K)$ is  irreducible  and the field compositum $\prod_{\sigma \in \mathscr G}\sigma(K)$  is stable under the action of $\mathscr G,$ hence a Picard-Vessiot extension of $k.$ Moreover, $\prod_{\sigma \in \mathscr G}\sigma(K)$ is an algebraic extension of $k\langle w_1,\dots, w_m\rangle,$ where $w_1, \dots, w_m$ are $k-$algebraically independent and $\mathcal R(w_i,w'_i,\dots, w^{n-1}_i)=0$ for each $i=1,2\dots,m.$ In particular, if $\mathscr G$ is a simple algebraic group then $E$ is a finite algebraic extension of  $k\langle w_1,\dots, w_m\rangle$ and the differential Galois group of  $\prod_{\sigma \in \mathscr G}\sigma(K)$ is isogenus to $\mathscr G.$ This structure of $E,$ when  $\mathscr  G$ is isogeneous to $\mathrm{SL}_2,$ is well-known (\cite[pp. 57-58]{Mag94}). 
\end{remark}


\section{Differential subfields of strongly normal extensions}

In this section, we will prove Theorem \ref{intromain-SN}.  The proof will use Chevalley's  theorem on the structure of algebraic groups (\cite[Theorem 16 \& Corollary 3]{Ros-56}) and  results on  Albanese variety of a homogeneous space (\cite[Lemma 1 and its Corollary, Theorems 2 \& 3]{Ish65}, \cite[Theorem A]{JSC22}).

\begin{proof}[Proof of Theorem \ref{intromain-SN} (\ref{intromain-irred})]  
Let $\mathscr G:=\mathscr G(E|k),$  $\mathscr{N}$ be the maximal connected  normal linear algebraic subgroup of $\mathscr{G}^0$ such that the factor group $\mathscr{G}^0/\mathscr{N}$ is an abelian variety and $\mathscr{D}$ be the smallest normal algebraic subgroup of $\mathscr{G}$ such that $\mathscr{G}/\mathscr{D}$ is a linear algebraic group. Note that $\mathscr D\subseteq \mathscr G^0$ and that $\mathscr G^0/\mathscr D$ is also a linear algebraic group. By the Galois correspondence of strongly normal extensions, $E^{\mathscr{N}}$ is an abelian extension of $k^0=E^{\mathscr G^0}$ and $E^{\mathscr{D}}$ is a Picard-Vessiot extension of $k.$  We shall prove that either $k^0K$ is an abelian extension of $k^0$ or $K\subseteq E^{\mathscr D}.$
	
	Suppose  that $k^0=k,$ that is $\mathscr G$ is a connected group.  Let $\mathscr{H}=\mathscr{G}(E|K)$ and observe that $K=E^{\mathscr{H}}.$ Consider the homogeneous space $V:=\mathscr{G}/\mathscr{H}.$ Then, $V$ is a nonsingular quasi-projective variety over $C.$ Let $\mathrm{Alb}(V)$ be an Albanese variety of $V.$ Then from \cite{Ish65},  we obtain that  \begin{enumerate}[(i)]
		\item $\mathrm{dim}\  \mathrm{Alb}(V)\leq \mathrm{dim}\ V,$
		\item $\mathrm{dim}\ \mathrm{Alb}(V)=0$ if and only if the isotropy group of any point on $V$ contains $\mathscr{D},$
		\item $\mathrm{dim}\ \mathrm{Alb}(V)=\mathrm{dim}\ V$ if and only if the isotropy group of any point on $V$ contains $\mathscr{N},$
		\item Given a homogeneous space $V$ of $\mathscr{G},$ there exists a homogeneous space $V_{\mathscr{N}}$ of the abelian variety $\mathscr{G}/\mathscr{N}$ of dimension $\mathrm{dim} \  \mathscr{G}-\mathrm{dim} \ \mathscr{N}\mathscr{H}$ such that  $V_{\mathscr{N}}$ is  an Albanese variety of $V.$    
	\end{enumerate}
	Note that $\mathscr{H}$ is an isotropy group of a point (namely $\mathscr H$) of $V$ and that $\mathrm{dim}\ V= \mathrm{Codim}\  \mathscr{H}=\mathrm{ tr. deg}(K/k).$ We now split the rest of the proof into two cases.
	
	Suppose that  $\mathrm{dim}\ \mathrm{Alb}(V)=0,$ then $\mathscr{D}\subseteq \mathscr{H}$ implies $K=E^{\mathscr{H}}\subseteq E^{\mathscr{D}}$ and therefore, $K$ must be a differential subfield of a Picard-Vessiot extension.
	
	Suppose that $\mathrm{dim}(\mathrm{Alb}(V))\geq1.$ Then $\mathrm{dim}(\mathscr{G})-\mathrm{dim}(\mathscr{N}\mathscr{H})\geq 1$ and therefore $\mathscr{N}\mathscr{H}$ is a proper subgroup of $\mathscr{G}$ and $E^{\mathscr{N}\mathscr{H}}\neq k.$ Since $\mathscr{H}\subsetneq\mathscr{N}\mathscr{H}\subsetneq\mathscr{G},$ by Galois correspondence, $k\subsetneq E^{\mathscr{N}\mathscr{H}}\subseteq E^{\mathscr{H}}=K.$ By assumption, $K$ is irreducible and therefore we have $K=E^{\mathscr{N}\mathscr{H}}(\alpha),$ where $\alpha$ is algebraic over $E^{\mathscr{N}\mathscr{H}}.$ We also have the following containment of closed subgroups $\mathscr{N}\subseteq\mathscr{N}\mathscr{H}\subsetneq\mathscr{G},$ which in turn gives the following containment of differential fields  $k\subsetneq E^{\mathscr{N}\mathscr{H}}\subseteq E^{\mathscr{N}}.$ Since $\alpha$ is algebraic over $E^{\mathscr{N}\mathscr{H}},$ it is also algebraic over $E^\mathscr{N}.$ Now since $\mathscr{N}$ is connected, $E^\mathscr{N}$ must be relatively algebraically closed and we obtain that  $\alpha \in E^{\mathscr{N}}.$ Therefore, $K=E^{\mathscr{N}\mathscr{H}}(\alpha)\subseteq E^\mathscr{N}.$ Thus $K,$ being a differential subfield of an abelian extension of $k,$  is an abelian extension of $k$. This completes the proof of the theorem when $\mathscr G$ is connected.
	
	Next, we suppose that  $k^0\neq k.$ Observe that $k\subseteq k^0\subseteq E^\mathcal D.$  Since $k^0=E^{\mathscr G^0}$  and that  $E$ is strongly normal extension of $k^0$ with a connected differential Galois group $\mathscr G^0,$  we obtain  that either $k^0K$ is  a subfield of the Picard-Vessiot extension  $k^0E^\mathcal D=E^\mathcal D$ of $k$ or $k^0K$ is an abelian extension of $k^0.$ 
\end{proof}

\begin{remark} \label{remarksonintromainSN}
 If $C\subset K\subset E$ are differential fields such that $E$ is a strongly normal extension of $C$ and $K$ is $C-$irreducible then $K$ is either an abelian extension of $C$ or $K$ can be embedded in a Picard-Vessiot extension of $C.$ In the latter case, since the differential Galois group of any Picard-Vessiot extension of $C$ is connected, commutative and isomorphic to $\prod^r \mathrm G_m$,  $\mathrm G_a$ or $\mathrm G_a \times \prod^r \mathrm G_m,$ we obtain that $K$ itself must be a Picard-Vessiot extension of $C.$ Furthermore, since $K$ is $C-$irreducible, there is a nonzero element $x\in K$ such that $K=C(x)$ and either  $x'=1$ or $x'/x\in C\setminus \{0\}.$ 
	\end{remark}


\begin{proof}[Proof of Theorem \ref{intromain}(\ref{intromain-uni})]
	Let $\mathscr G:=\mathscr G(E|k)$ be the differential galois group of $E$ over $k.$ First we suppose that $k$ be an algebraically closed field. It is known that $K,$ as a field, is isomorphic to the field  of fractions of the domain $k\otimes_C C(\mathscr G/ \mathscr H)),$ where $\mathscr H=\mathscr G(E|K)$ (\cite[Theorem 1]{BB62} or \cite[Chapter I, Section 3]{Bui86}).  Let $V=\mathscr G/\mathscr H$ and $V_k:= k\times_C V$.  Since $K$ is a subfield of a purely transcendental extension field of $k,$ we have the following morphisms $$\mathbb P^n(k)\xrightarrow{\phi} V_k\xrightarrow{\mathrm{Alb}_{V_k}} \mathrm{Alb}(V_k),$$ where $\mathrm{Alb}_{V_k}$ is the Albanese morphism and  $\phi$ is the dominant morphism induced by the inclusion map. It is known that there is no nonconstant morphism from an unirational variety to any abelian variety ( \cite[Corollary of Theorem 4]{Lang-59}) and therefore the composition map must be trivial. Since $\phi$ is dominant and that $V_k$ is irreducible, it follows that $\mathrm{Alb}_{V_k}$  is also trivial and thus $ \mathrm{dim_k \ Alb}(V_k)=0.$ It is been recently proved that the construction of an Albanese variety of a given variety respects base change and thus $\mathrm{dim_C\ Alb}(V)=\mathrm{dim_k \ Alb}(V_k)=0$ (\cite[Theorem (A)]{JSC22}). This implies $\mathrm{Alb}(V)$ is trivial. 
	
	Let $\mathscr N=\{g\in \mathscr G\ | \ gv=v\ \text{for all }\ v\in V\}.$ Then it is easily seen that $\mathscr N\subseteq \mathscr H,$  $\mathscr N$ is  normal subgroup of $\mathscr G$ and that the induced action of $\mathscr G/\mathscr N$ on $V$ is faithful. Therefore, the kernal of the induced map $\mathscr G/\mathscr N\to \mathrm{Alb}(V)=\{e\}$ must be a linear algebraic group (\cite[Theorem 2]{Matsumura-63} or \cite{Brion-2010}). That is, $\mathscr G/\mathscr N$ must be a linear algebraic group. Thus, we obtain that $E^\mathscr N$ is a Picard-Vessiot extension of $k$ and that $k\subset K\subseteq E^\mathscr N$.

    Now, let $\overline{k}$ be the algebraic closure of $k.$  From \cite[Theorem 5]{Kol53}, $\overline{k}E$ is a strongly normal extension of $\overline{k}$ and the morphism $$\pi:\mathscr G(\overline{k}E| \overline{k})\rightarrow \mathscr G(E| \overline{k}\cap E)=\mathscr G(E|k^0),\text{ } \text{$\sigma \mapsto \sigma|_E$}$$ is an isomorphism of algebraic groups. We know from the above discussion that there exists a normal algebraic subgroup $\mathscr N\subset \mathscr G(\overline{k}E| \overline{k})$ such that $\overline{k}K\subseteq (\overline{k}E)^\mathscr N$ and that $\mathscr G(\overline{k}E| \overline{k})/\mathscr N$ is a linear algebraic group.  Consider the algebraic group $\pi(\mathscr N).$ Note that $\mathscr G(\overline{k}E| \overline{k})/\mathscr N\cong\mathscr G(E/k^0)/\pi(\mathscr N)$ implies $\mathscr G(E/k^0)/\pi(\mathscr N)$  is a linear algebraic group. Therefore $E^{\pi(\mathscr N)}$ is a Picard-Vessiot extension of $k^0.$ Since $\overline{k}K\subseteq (\overline{k}E)^\mathscr N,$ it follows that $K\subseteq E^{\pi(\mathscr N)}.$ As $k^0$ is a finite Galois extension of $k,$ the differential field $E^{\pi(\mathscr N)}$ can be embedded in a Picard-Vessiot extension $L$ of $k$ such that $L\subseteq E$ (\cite[Proposition 11.4]{Kov06}). Thus $k\subset K\subseteq E^{\pi(\mathscr N)}\subseteq L$ and this completes the proof.   \end{proof}

\subsection{Differential subfields of iterated strongly normal and iterated  Picard-Vessiot extensions}

A finitely generated differential field extension $K$ of $k$ is said to be \emph{depend rationally on arbitrary constants over} $k$ if there is a differential field extension $M$ of $K$ satisfying the following conditions.
\begin{enumerate}[(a)] \item   $K$ and $M$ are free over $k$.\\
	\item $KM=MC_{KM},$ where $C_{KM}$ is the field of constants of the compositum $KM.$
\end{enumerate}   
In a series of works (see \cite{BB62, NK88, NK89, NK89a, NK94, HU90, Hu97}), A. Bialynicki-Birula, K. Nishioka and H. Umemura studied differential fields depend rationally on arbitrary constants and obtained various results that enabled to study Painlev\'{e} equations and strongly normal extensions. Here, we shall consolidate their results that are relevant  to our work and  prove a structure result for differential subfields of iterated strongly normal extensions.

Let $K$ and $M$ be differential fields such that $K\subseteq M\subseteq KC_E.$ Then it is shown in \cite[Lemma]{NK89a} that $M=KD,$ for some field  $D\subset C_E$. Using this result, it can be shown that if a differential field extension $E$ of $k$ depends rationally on arbitrary constants over $k$ then its differential subfields also depend rationally on arbitrary constants over $k.$  Let $E$ be a Picard-Vessiot extension of $k$ for an operator $\mathcal L:=\partial^n-\sum_{i=0}^{n-1}a_i\partial^i\in k\left[\partial\right]$ with full set of solutions $V$ with basis $\{v_1,\dots,v_n\}.$   Consider the rational function field $L=E(X_{ij}| 1\leq i,j\leq n )$ with the derivation $X'_{ij}=X_{i+1j}$ for $i<n-1$ and $X'_{n-1j}=\sum_{i=0}^{n-1}a_iX_{ij}.$ By definition, $M:=k(X_{ij})$ and $E$ are free over $k.$ It is easy to  see that $(v_{j}^{(i)})=(X_{ij})(c_{ij}),$ where $(c_{ij})\in \mathrm{GL}_n(C_{EM}).$ Therefore, $EM=MC_{EM}$ and we obtain the Picard-Vessiot extensions depend rationally on arbitrary constants. If $K$ is an algebraic extension of $k$ and $E$ is a finite  Galois extension of $k$ containing $K$ then since $E$ is also a Picard-Vessiot extension of $k$, it follows that $K$ depends rationally on arbitrary constants.  

More generally, if $E$ be a strongly normal extension of $k$, then $E$ depends rationally on arbitrary constants. To see this, let $k^0$ be the algebraic closure of $k$ in $E$.  Then $E\otimes_{k^0} E$ is a domain and we have the differential embeddings  $i_1: E\to E\otimes_{k^0} E,$ $i_1(e)=e\otimes 1$ and $i_2: E\to E\otimes_{k^0} E,$ $i_2(e)=1\otimes e.$ Let $\mathcal E$ be the field of fractions of $(E_1\otimes_{k^0} E_2).$ Clearly, $i_1(E)$ and $i_2(E)$ are linearly disjoint over $k^0$ and consequently, they are free over $k.$ Now since $i_1$ and $i_2$ are differential embeddings and $E$ is a strongly normal  extension of $k,$ we obtain that  $i_1(E)i_2(E)= \mathcal E=i_2(E)C_{\mathcal E}.$ Thus, any strongly normal extension $E$ of $k$ depends rationally on arbitrary constants of $k.$   

If a differential field extension $K$ of $k$ can be resolved into a tower of differential fields
$$k=K_0\subseteq K_1\subseteq K_2\subseteq \cdots \subseteq K_n=K,$$ where each $K_i$ depend rationally on arbitrary constants the $K$ is called a \emph{Painlev\'{e}-Umemura extension} of $k.$
Thus, if $E$ is an iterated strongly normal extension of $k$ then $E$ is a  Painlev\'{e}-Umemura extension of $k.$

\begin{proof}[Proof of Theorem \ref{iter-SN-Subfields}]
	We first prove the theorem when $K$ is $k-$irreducible. Let $k^0$ be the algebraic closure of $k$ in $K$. Then, since $E$ is an iterated strongly normal extension of $k^0,$ we obtain from \cite[Theorem]{NK94} that $K$ is  a Painlev\'{e}-Umemura extension of $k^0$ and that 
	$$k^0\subset M\subseteq K,$$ where $M$ is a differential subfield of a strongly normal extension of $k^0$ (see \cite[Theorem 23]{HU90}). Now, from Remark \ref{embeddingSNafterfinitalg}(\ref{SNafterfinitealg}), $M$ is actually a differential subfield of a strongly normal extension of $k.$ Since $K$ is $k-$irreducible,  $K$ must be an algebraic extension of $M.$ Now suppose that $E$ is an iterated Picard-Vessiot extension of $k.$ Then from Theorem \ref{intromain-SN}, either $M$ is a differential subfield of a Picard-Vessiot extension of $k$ or  
 there is an algebraic extension $\tilde{k}$ of $K$ such that $\tilde{k}M$ is an abelian extension of $\tilde{k}.$ We shall show that  the latter situation does not occur. 
	
Assume, on the contrary, that $\tilde{M}:=\tilde{k}M$ is an abelian extension of $\tilde{k}.$ We know that $\tilde{E}:=\tilde{k}E$ is an iterated Picard-Vessiot extension of $\tilde{k}.$ Let
	$$\tilde{k}=E_0\subseteq E_1\subseteq\cdots \subseteq E_r=\tilde{E},$$ where each $E_i$ is a Picard-Vessiot extension of $E_{i-1}.$  Without loss of generality, we shall assume that $\tilde{M}\not\subset E_{r-1}^0$ and $\mathrm{tr. deg}(\tilde{M}E_{r-1}^0|E_{r-1}^0)\geq 1$ where $E_{r-1}^0$ is algebraic closure of $E_{r-1}$ in $E_r.$  From \cite[Theorem 5]{Kol73}, we have $$\mathscr G(\tilde{M}E_{r-1}^0|E_{r-1}^0)\cong \mathscr G(\tilde{M}| \tilde{M}\cap E_{r-1}^0).$$ Note that $\mathscr G(\tilde{M}|\tilde{M}\cap E_{r-1}^0),$ being a closed subgroup of an abelian variety,  is an abelian variety. Therefore, $\mathscr G(\tilde{M}E_{r-1}^0|E_{r-1}^0)$ is an abelian variety and $\tilde{M}E_{r-1}^0$ is an abelian extension of $E_{r-1}^0$.  This implies $\mathscr G(\tilde{E}|\tilde{M}E_{r-1})$ is a normal subgroup of the connected linear algebraic group $\mathscr G(\tilde{E}|E_{r-1}^0)$ such that the corresponding quotient group is the abelian variety $\mathscr G(\tilde{M}E_{r-1}^0|E_{r-1}^0).$ This is impossible as there is no nonconstant morphism from a connected linear algebraic group to an abelian variety. Thus $\tilde{M}$ cannot be an abelian extension of $\tilde{k}$, and this completes the proof of the theorem when $K$ is $k-$irreducible. 
	
	Now we only need to consider the case when  $\mathrm{tr.deg}(K|k)\geq 2$ and that $K$ is $k-$reducible. In this case, we have $$k\subset M\subset K,$$  where $M$ is a differential field such that both $\mathrm{tr.deg}(M|k)$ and $\mathrm{tr.deg}(K|M)$ are less than $\mathrm{tr.deg}(K|k).$ Since $E$ is also an iterated strongly normal extension (respectively, iterated Picard-Vessiot extension)of $M,$  through an induction on the transcendence degree, we obtain the required tower of differential fields for $K.$
\end{proof}

\subsection{An extension of Rosenlicht's result} In Proposition \ref{strong-ros-extn}, we shall establish an intricate connection between the existence of new constants after base change and the presence of differential subfields that can be embedded in  strongly normal extensions.  As we will see, the proposition is proved merely by applying existing results from the theory of differential fields algebraically dependent on arbitrary constants. Before we proceed to state the proposition, we first recall few notations and definitions from \cite{NK89a}. 

For two multi-indices\footnote{A sequence $J=(j_n)$, where $j_n$s are non-negative integers and $j_n=0$ for all but finitely many is called a \emph{multi-index}.} $I=(i_n)$ and $J=(j_n),$ defined $I<J$ if
there is an integer $m$ with $i_m<j_m$ and $i_n=j_n$ for all $n\geq m.$ Let $k\left\lbrace Y\right\rbrace $ denote the algebra of differential polynomials with coefficients in $k.$ An element $A$ of $k\left\lbrace Y\right\rbrace $ can be written as $A=\sum_{J}a_J Y^J,$ where $J$ runs through all multi-indices,  $a_J$ belongs to  $k$ and $Y^J=\prod_{n\geq 0}Y_n^{j_n},$ where $Y_n$ is  the $n-$th derivative of $Y.$ The rank of the non-zero differential polynomial $A$ is defined as the highest multi-index $J$ with $a_J\neq0.$ Let $x$ be an element of a  finitely generated differential field extension of $k.$ Then, there exists a nonzero differential polynomial $A$ over $k$ such that $A(x)=0.$ The rank of $x$ over $k,$ denoted by $\mathrm{rank}_k(x),$ is defined as the lowest among the ranks of all differential polynomials over $k$ that vanish at $x.$  Define $$r(x):=\min\left\{ \mathrm{rank}_{MC_{M(x)}}(x)\ | \ M \ \text{and}\ k(x)\ \text{are linearly disjoint over }\ k\right\}.$$ The following observations are readily seen.  

\begin{enumerate}[(i)]
	\item $r(x)<\mathrm{rank}_k(x)$\\  
	\item If $r(x)=(j_n)$ and $j_0=1$ and $j_n=0$ for all $n>0,$ then the differential field $k\left\langle x\right\rangle $ depends rationally on arbitrary constants and if $j_n=0$ for all $n>0,$ then the differential field $k\left\langle x\right\rangle $ depends algebraically on arbitrary constants. \\
	\item If $\mathrm{tr.deg}(k\langle x\rangle|k)=m$ then $\mathrm{rank}_k(x)=(j_n),$ where $j_m\neq 0$ and $j_n=0$ for all $n\geq m+1.$\\
	\item If $\mathrm{rank}_k(x)=(j_n),$ where $j_m\neq 0$ and $j_n=0$ for all $n\geq m+1$ then $\mathrm{tr.deg}(k\langle x\rangle|k)\leq m$
\end{enumerate}

\blfootnote{It is worth noting that Umemura (\cite{HU90}) proves that any soluton of the first differential equation of Painlevé: $y'=6y^2+x$ is "essentially transcendental" by showing that $r(y)=(0,0,1,0,\dots).$}

\begin{proposition}\label{strong-ros-extn} Let $K, M$ be finitely generated differential subfields of some differential field extension of $k,$  $K$ and $M$ be free over $k.$  If $\mathrm{tr.deg}(C_{KM}|C_M)=l\geq 1$ then  either $C_K\neq C$  or $K$ has a differential subfield $L$ with $\mathrm{tr.deg}(L|k)\geq l$ that can be embedded in a strongly normal extension of $k.$  \end{proposition}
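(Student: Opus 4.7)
The plan is to argue the contrapositive: assuming $C_K = C$, I will produce a differential subfield $L \subseteq K$ with $\mathrm{tr.deg}(L|k) \geq l$ that embeds in a strongly normal extension of $k$. First, I unpack the hypothesis by choosing constants $c_1, \dots, c_l \in C_{KM}$ algebraically independent over $C_M$. Since $K$ and $M$ are free over $k$, the canonical map $K \otimes_k M \to KM$ is injective, so each $c_i$ admits a presentation as a rational function in finitely many elements of $K$ with coefficients in $M$. Differentiating the relations $c_i' = 0$ produces nontrivial differential-algebraic constraints tying elements of $K$ to elements of $M$.

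The core step is to convert the $l$ algebraically independent new constants into $l$ algebraically independent constant-dependent elements of $K$ via Nishioka's rank theory \cite{NK89a}. For a suitably chosen transcendence basis $y_1, \dots, y_n$ of $K$ over $k$, I would compute the generic ranks $r(y_i)$. By the observations recalled just above the proposition, an index $i$ for which $r(y_i) = (j_n)$ has $j_n = 0$ for all $n > 0$ yields a differential field $k\langle y_i\rangle$ that depends algebraically on arbitrary constants over $k$. The hypothesis $C_K = C$ together with the freeness of $K$ and $M$ over $k$ rules out any contribution of new constants from $M$ alone, so the transcendence degree $l$ of $C_{KM}$ over $C_M$ must be attributed to $K$, delivering at least $l$ such indices, say $y_1, \dots, y_l$, algebraically independent over $k$.

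Having isolated these $y_i$, the final step invokes Umemura's embedding theorem \cite[Theorem 23]{HU90}: a differential field depending algebraically on arbitrary constants over $k$ embeds in a strongly normal extension of $k$. Applying this to each $k\langle y_i\rangle$ and using the fact that a finite compositum of strongly normal extensions of $k$ can again be embedded in a strongly normal extension of $k$ (invoking Remark \ref{embeddingSNafterfinitalg}(\ref{SNafterfinitealg}) after any necessary finite algebraic ascent), the subfield $L := k\langle y_1, \dots, y_l\rangle \subseteq K$ satisfies $\mathrm{tr.deg}(L|k) \geq l$ and embeds in a strongly normal extension of $k$, as required.

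The principal obstacle is the middle step: precisely tracking transcendence degree as one passes from \emph{$l$ new constants in $KM$ over $C_M$} to \emph{a subfield of $K$ of transcendence degree $\geq l$ depending on arbitrary constants}. Although the qualitative direction is baked into the Nishioka--Umemura framework, securing the full count $l$ requires careful rank-controlled selection of transcendence-basis elements of $K$ and a genuine use of the freeness of $K$ and $M$ over $k$ to prevent the constant growth from being absorbed into $M$ rather than $K$.
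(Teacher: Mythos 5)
There is a genuine gap, and it sits exactly where you flag it: the ``middle step'' converting $l$ new constants of $KM$ over $C_M$ into a subfield of $K$ of transcendence degree $\geq l$ that depends on arbitrary constants. Your strategy of examining the ranks $r(y_i)$ of individual transcendence-basis elements and hoping to find $l$ indices with $k\langle y_i\rangle$ depending algebraically on arbitrary constants does not work: the new constants are rational functions jointly in the $y_i$'s and elements of $M$, and nothing forces any single $k\langle y_i\rangle$ to depend (algebraically or rationally) on arbitrary constants, let alone $l$ of them. The condition $r(y_i)=(j_n)$ with $j_n=0$ for all $n>0$ means $y_i$ is algebraic over some $M_iC_{M_i(y_i)}$, which is far stronger than what the hypothesis supplies. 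So the decomposition into per-element contributions is not justified, and the count $l$ cannot be secured this way.

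The paper closes this gap with a different device. It takes a \emph{single} differential generator $y$ of $K$ (so $K=k\langle y\rangle$, $m:=\mathrm{tr.deg}(K|k)$) and invokes Nishioka's theorem \cite{NK89a}, which produces a \emph{maximal} intermediate field $N$, $k\subseteq N\subseteq K$, depending rationally on arbitrary constants and satisfying $r(y)=\mathrm{rank}_N(y)$. The hypothesis $\mathrm{tr.deg}(C_{KM}|C_M)=l$ forces $\mathrm{tr.deg}(MC_{KM}\langle y\rangle \,|\, MC_{KM})\leq m-l$, i.e. $\mathrm{rank}_{MC_{KM}}(y)=(j_n)$ with $j_n=0$ for $n\geq m-l+1$; since $r(y)\leq \mathrm{rank}_{MC_{KM}}(y)$, the observation (iv) preceding the proposition yields $\mathrm{tr.deg}(K|N)\leq m-l$, hence $\mathrm{tr.deg}(N|k)\geq l$ in one stroke — no attribution of individual constants to individual basis elements is needed. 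Then \cite{NK89} (using $C_K=C$) embeds $N$ into a strongly normal extension of $k$; your final step is replaced by this single application rather than a compositum of $l$ embeddings. Two further points your sketch omits: one first reduces to $k$ algebraically closed in $K$ via Remark \ref{embeddingSNafterfinitalg}(\ref{SNafterfinitealg}), and the case $k=C$ needs the separate device of adjoining $x$ with $x'=1$ as in \cite[Corollary 42.1]{HU90} before Nishioka's theorem applies.
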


\begin{proof}  We may assume that $C_K=C$ and in view of Remark \ref{embeddingSNafterfinitalg}(\ref{SNafterfinitealg}), that $k$ is algebraically closed in $K.$   Suppose that $K=k\langle y\rangle$ and that $\mathrm{tr.deg}(K|k)=m.$ Then from \cite[Theorem]{NK89a}, there is a differential field $N,$ intermediate to $k$ and  $K$ that depends rationally on arbitrary constants and that $N$ is maximal among all such differential fields and that $r(y)=\mathrm{rank}_N(y).$ Note that $r(y)\leq \mathrm{rank}_{MC_{KM}}(y)$ and since $\mathrm{tr.deg}(C_{KM}|C_M)=l,$  we have $\mathrm{rank}_{MC_{KM}}(y)=(j_n),$ where $j_n=0$ for all $n\geq m-l+1.$ Thus,
	$$r(y)=\mathrm{rank}_N(y)\leq (j_n).$$
As noted earlier, we obtain that $\mathrm{tr.deg}(N|k)\geq l.$ Since $C_K=C$ has no constants, from \cite[Theorem]{NK89}, it follows that $N$ can be embedded in a strongly normal extension of $k$. In the general case when $K\neq C,$ one can always find an element $y\in K$ such that $K=k\langle y\rangle$  and thus the proof of the Proposition is complete. However,  if $k=C$ then we need the following approach detailed in \cite[Corollary 42.1]{HU90}. Adjoin a variable $x$ to $C$ and extend the zero derivation on $C$ to a derivation on $C(x)$ with $x'=1.$ Now, we have $K(x)=C(x)\langle y\rangle$ and in this case we know the existence of a differential subfield $N_1$ of $K(x)$ that depends rationally on arbitrary constants. Such a subfield  $N_1$ is known to be a compositum of $C(x)$ and a subfield $N$ of $K$ that depends rationally on arbitrary constants. Now we apply \cite[Theorem]{NK89} to obtain the desired result. \end{proof}

	\begin{remark}(cf. \cite[Proposition]{Ros74}) \label{ros-extn} Let $K, M$ be finitely generated differential subfields of some differential field extension of $k,$  $C_K=C$, $K$ and $M$ be free over $k$ and  $k$ be algebraically closed in $K.$  Suppose that $C_{KM}\neq C_{M}.$ Then since $k$ is algebraically closed in $K$, we have $M$ algebraically closed in $KM.$ Therefore $\mathrm{tr.deg}(C_{KM}|C_M)\geq 1.$ Now from the above proposition and from Theorems  \ref{intromain}, \ref{intromain-SN}, we obtain a $k-$irreducible subfield $L$ of $K$ that can be embedded in a strongly normal extension of $k$ and that either $\tilde{k}L$ is an abelian extension of an algebraic extension $\tilde{k}$ of $k$ or $L=k\langle w\rangle,$ where $w$ is a nonalgebraic solution of a Riccati differential equation. Furthermore, if  $k=C$ then as noted in Remark \ref{remarksonintromainSN}, $L=C(x),$ where either $x'=1$ or $x'/x=c\in  C\setminus \{0\}.$
\end{remark}

\begin{remark}\label{EJ-result}
    We shall now briefly illustrate a special case of  Proposition \ref{strong-ros-extn}, which has already been established by Eagles \& Jimmenez \cite[Corollary]{eagles-Jimenez} very recently.  Let $M$ be a differential field extension of $C$ and $M(x_1,\dots,x_n)$ be a rational function field in $n$ variables. For $f_1,\dots,f_n\in C(x_1,\dots,x_n),$ extend the derivation on $M$ to $M(x_1,\dots,x_n)$ by defining $x'_i=f_i(x_1,\dots,x_n),$ for $i=1,2,\dots, n.$ Clearly, $M$ and $C(x_1,\dots,x_n)$ are free over $C.$ If the differential field $M(x_1,\dots,x_n)$ has a constant which is not in $M$ then from Theorem \ref{intromain-SN}(\ref{intromain-uni}) and the Remark \ref{ros-extn}, we obtain an element  $z\in C(x_1,\dots,x_n)\setminus C$  such that  either $z'=0,$ $z'=1$ or  $z'=cz$ for some $c\in C\setminus 0.$ These conclusions can be rewritten as a follows: There is an element  $z\in C(x_1,\dots,x_n)\setminus C$ such that $$\sum^n_{i=1}f_i(x_1,\dots,x_n)\frac{\partial z}{\partial x_i}=\begin{cases}0\\1\\ cz\end{cases}.$$ \end{remark}



\section{Differential algebraic dependency of solutions of algebraic differential equations} In this section, $k$ is assumed to be an algebraically closed differential field and $\mathfrak U$ is an universal extension of $k.$    Let $f\in k[Y_1,\dots,Y_{n+1}],$ $n\geq 1,$ be an irreducible polynomial with $\partial f/\partial Y_{n+1}\neq 0$ and $y_1\in \mathfrak U$ be a nonalgebraic solution of the differential equation $f(y,y',\dots,y^{(n)})=0.$

\begin{proposition}\label{algindp-auto} 
	If  $k\langle y_1\rangle$ contains a strongly normal extension of $k$ then $\mathfrak S_{k,f,y_1}=\{y_1\}.$  
\end{proposition}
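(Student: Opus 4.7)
The plan is to assume, for contradiction, that there exists $y_2\in\mathfrak{S}_{k,f,y_1}$ with $y_2\neq y_1$ and derive a contradiction from the strongly normal hypothesis combined with the freeness encoded in the definition of $\mathfrak{S}_{k,f,y_1}$. By condition (i), there is a differential $k$-isomorphism $\phi\colon k\langle y_1\rangle\to k\langle y_2\rangle$ with $\phi(y_1)=y_2$. Let $E\subseteq k\langle y_1\rangle$ be the (proper) strongly normal extension of $k$ that is hypothesized to exist.

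First I would descend $\phi|_E$ to an automorphism of $E$. Since $E\subseteq k\langle y_1\rangle\subseteq k\langle y_1,y_2\rangle$, the field $k\langle y_1,y_2\rangle$ is a differential extension of $E$, and by condition (ii) its field of constants equals $C$. The restriction $\phi|_E\colon E\to k\langle y_2\rangle\hookrightarrow k\langle y_1,y_2\rangle$ is thus a differential $k$-embedding of $E$ into a differential extension of itself, so the strong normality of $E$ over $k$ yields $\phi(E)\subseteq E\cdot C_{k\langle y_1,y_2\rangle}=E$. Combining with $\phi(E)\subseteq k\langle y_2\rangle$, I obtain the containment $\phi(E)\subseteq k\langle y_1\rangle\cap k\langle y_2\rangle$.

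Next I would use condition (iii), which asserts $\mathrm{tr.deg}(k\langle y_1,y_2\rangle\mid k)=2\,\mathrm{tr.deg}(k\langle y_1\rangle\mid k)$, to conclude that $k\langle y_1\rangle\cap k\langle y_2\rangle=k$. Indeed, writing $n=\mathrm{tr.deg}(k\langle y_1\rangle\mid k)$, any transcendental element common to $k\langle y_1\rangle$ and $k\langle y_2\rangle$ could be extended to transcendence bases of each, exhibiting a generating set for $k\langle y_1,y_2\rangle$ of transcendence degree at most $2n-1$ and contradicting additivity; and since $k$ is algebraically closed, any algebraic element of the intersection already lies in $k$. Hence $\phi(E)\subseteq k$. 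But $\phi$ is a field isomorphism onto its image, so it preserves transcendence degree over $k$, which forces $\mathrm{tr.deg}(E\mid k)=0$ and, since $k$ is algebraically closed, $E=k$. This contradicts the properness of the strongly normal extension $E$. Therefore no such $y_2$ exists and $\mathfrak{S}_{k,f,y_1}=\{y_1\}$.

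The main subtle point is the second step: verifying that the normality condition is applicable, which requires noting that $k\langle y_1,y_2\rangle$ is a differential extension of $E$ with the same constants, so that the strongly normal definition can be invoked to trap $\phi(E)$ inside $E$. Once that is in place, the transcendence-degree bookkeeping in the third step is routine.
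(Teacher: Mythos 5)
Your proof is correct and follows essentially the same route as the paper's: restrict the differential $k$-isomorphism to the strongly normal subfield, use the normality condition (together with condition (ii) on constants) to trap its image inside $k\langle y_1\rangle\cap k\langle y_2\rangle$, and then contradict the additivity of transcendence degrees from condition (iii). The only cosmetic difference is that the paper observes the restriction is an automorphism of $K$ outright, whereas you work with the containment $\phi(E)\subseteq E$ and spell out the freeness bookkeeping; the substance is identical.
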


\begin{proof}
	Suppose that $k\langle y_1 \rangle$ contains a strongly normal extension $K$ of $k$ and $ y_2\in \mathfrak S_{k,f,y_1}.$  Then,  any  differential $k-$isomorphism $\sigma:k\langle y_1\rangle\to  k\langle y_2\rangle$  restricts to a differential $k-$automorphism of $K.$ That is, $k\subset K\subseteq k\langle y_1\rangle\cap k\langle y_2\rangle.$ This implies $\mathrm{tr.deg}(k\langle y_1,y_2\rangle)<\mathrm{tr.deg}(k\langle y_1\rangle+$ $\mathrm{tr.deg}(k\langle y_2\rangle.$  Thus $y_1=y_2.$  
\end{proof}

\begin{proof}[Proof of Theorem \ref{algebraicdependence-aode}(\ref{nongeneral-SN})] Suppose that $\mathfrak S_{k,f,y_1}$ is a finite set having $m$ elements. Let $M=\mathrm{Frac}\left(\bigotimes^{m} k\langle y_1\rangle\right)$ and $E=\mathrm{Frac}\left(M\otimes k\langle y_1\rangle\right).$  We may assume that $C_M=C.$ Note that $E$  contains the following $m+1$  elements  $$\tilde{y}_1:=\frac{y_1\otimes 1\otimes 1\cdots\otimes 1}{1},\dots,\tilde{y}_{m+1}:= \frac{1\otimes1\otimes\cdots\otimes 1\otimes y_1}{1},$$ where
	 for each $i=1,2,\dots, m,$ $f(\tilde{y}_i,\tilde{y}'_i,\dots, \tilde{y}^{(n)}_i)=0$ and that $\mathrm{tr.deg}(E|k)=(m+1)\ \mathrm{tr.deg}(k\langle y_1\rangle|k)$.  Since $\mathfrak U$ is universal extension of $k,$  there is a $k-$embedding of $E$ into $\mathfrak U$ and since $\mathfrak S_{k,f,y_1}$ contains only $m$ elements, we must therefore have $C_{E}\supset C_M=C.$ Clearly, $M$ is algebraically closed in $E$ and therefore  $\mathrm{tr.deg}(C_{E}|C_M)\geq 1.$ Now, from Corollary \ref{ros-extn}, the desired result follows.
	
	To prove the converse, we first assume that $k\subset k\langle w\rangle\subseteq k\langle y_1\rangle,$ where $w$ is a nonalgebraic solution of a Riccati differential equation corresponding with a monic linear operator $\mathcal L\in k[\partial]$ of order $n.$ Let $E$ be a Picard-Vessiot extension of $k$ for $\mathcal L$ and $l:=\mathrm{tr.deg}(E|k).$  Let $y_1, y_2,\dots, y_{m}\in \mathfrak S_{k,f,y_1}.$ We shall show that $m\leq l$ and this will prove $\mathfrak S_{k,f,y_1}$ is a finite set. Let $M:=k\langle y_1,\dots, y_m\rangle$ and   $L$ be a Picard-Vessiot extension of $M$ for $\mathcal L\in k[\partial]\subseteq M[\partial]$ and $V\subset L$ be the set of all solutions of $\mathcal L(t)=0$ in $L.$ Then since $C_M=C,$ $k\langle V\rangle$ is a Picard-Vessiot extension of $k$ for $\mathcal L\in k[\partial].$ Therefore, $E$ and $k\langle V\rangle$ are $k-$isomorphic differential fields.  
    
    Let $w_1:=\zeta'/\zeta,$ $p:=\mathrm{tr.deg}(k\langle w_1\rangle|k)$ and for each $i=2, 3,\dots, m,$ let $w_i=\sigma_i(\zeta'/\zeta).$ Let $W$ be the set of all solutions in $L$ of the  Riccati differential equation $\mathcal R_\mathcal L(w, w',\dots,w^{(n-1)})=0.$ It is easily seen that the map $l\delta: V\setminus \{0\}\to W,$ $l\delta(x)=x'/x,$ is surjective.  This, in particular, implies $w_1,\dots, w_m\in W\subset k\langle V\rangle$ and we obtain that $$m\leq mp=\mathrm{tr.deg}(k\langle w_1,\dots, w_m\rangle)\leq l=\mathrm{tr.deg}(k\langle V\rangle|k).$$ Thus, $\mathfrak S_{k,f,y_1}$ is a finite set. 

Finally, we assume that $k\subset K\subseteq k\langle y_1\rangle$ and that $K$ is an abelian extension of an algebraic extension $\tilde{k}$ of $k.$ Note that $\mathfrak U$ is also universal over $\tilde{k}.$ From Proposition \ref{algindp-auto}, we know that $\mathfrak S_{\tilde{k},f,y_1}=\{y_1\}.$ Thus if $y_2\in \mathfrak S_{k,f,y_1}$ then since \begin{align*}\mathrm{tr.deg}(k\langle y_1,y_2\rangle|k)&=\mathrm{tr.deg}(\tilde{k}\langle y_1,y_2\rangle|\tilde{k})\\ &<\mathrm{tr.deg}(\tilde{k}\langle y_1\rangle|\tilde{k})+\mathrm{tr.deg}(\tilde{k}\langle y_2\rangle|\tilde{k})\\ &=\mathrm{tr.deg}(k\langle y_1\rangle|k)+\mathrm{tr.deg}(k\langle y_2\rangle|k),\end{align*}
we must have $y_1=y_2$ and we obtain that $\mathfrak S_{k,f,y_1}=\{y_1\}.$
\end{proof}

\begin{proof}[Proof of Theorem \ref{algebraicdependence-aode}(\ref{auto-nongeneral-SN})]
From Theorem \ref{algebraicdependence-aode}(\ref{nongeneral-SN}), we obtain a $C-$irreducible differential subfield $M$ of  $C\langle y_1\rangle$  that can be embedded in a strongly normal extension of $C$ and from  Remark \ref{remarksonintromainSN}, we conclude that $M$ has the desired structure. 
Converse readily follows from Proposition \ref{algindp-auto} and from the definition of $\mathfrak S_{k,f,y_1}$. 
\end{proof}

\begin{proof}[Proof of Theorem \ref{algebraicdependence-aode}(\ref{cardn+2-order4})] We only need to show that $\mathfrak S_{k,f,t}$ is finite  implies $\mathfrak S_{k,f,t}$ has at most $n+2$ elements. Suppose that $\mathfrak S_{k,f,y_1}=\{y_1,\dots, y_m\}.$  From Theorem \ref{algebraicdependence-aode}(\ref{nongeneral-SN}), it is enough to consider the case when there are a monic $\mathcal L\in k[\partial]$ of order $r$ and an element $w\in k\langle y_1\rangle$ such that $\mathcal R_\mathcal L(w,w',\dots, w^{(r-1)})=0$ and  that $K:=k\langle w\rangle$ is  $k-$irreducible. For $i=2, \dots, m,$ we also have solutions $w_i\in k\langle y_i\rangle$ of $\mathcal R_\mathcal L(w,w',\dots, w^{(r-1)})=0.$  Let $L$ be  a Picard-Vessiot extension of $k$ for $\mathcal L$ with $w_1,\dots, w_m\in L$ (see the proof of Theorem \ref{algebraicdependence-aode}(\ref{nongeneral-SN})).  Let $l:=\mathrm{ tr. deg}(K|k).$ Then, $$l\leq n\ \ \text{and} \ \ lm=\mathrm{tr.deg}(k\langle  w_1,\dots, w_m \rangle|k)\leq  \mathrm{tr.deg}(L|k).$$

Thus, it is enough to show that $\mathrm{tr.deg}(L|k)\leq l(l+2).$ Let $\mathscr G:=\mathscr G(L|k).$ We shall split the proof into a few cases, as done in the proof of Theorem \ref{intromain}.

Suppose  that $L^\mathscr U\cap K=k,$ where $\mathscr U$ is the unipotent radical of $\mathscr G.$ Then, from Lemma \ref{unipotent-case}, we may further assume that $\mathcal L=\mathcal L_1\mathcal D,$ where $\mathrm{ord}(\mathcal L_1)=1$ and $\mathrm{ord}(\mathcal L)\leq l+1.$ Since $\mathcal L$ has a right factor,   $\mathscr G$ is a proper (connected) subgroup of $\mathrm{GL}_{l+1}(C)$ and we obtain that $\mathrm{tr.deg}(L|k)\leq (l+1)^2-1=l(l+2).$ 

Suppose that $L^\mathscr R\cap K=k$ and that $\mathscr R$ is the radical of $\mathscr G,$ then from Lemma \ref{torus-case}, we obtain that $K$ contains the Picard-Vessiot extension $k(\zeta)$ of $k$ where $\zeta\in K\setminus k$ and $\zeta'/\zeta\in k.$ Then, by Proposition \ref{algindp-auto}, we in fact obtain $m\leq 2.$ 

Suppose that $\mathscr G$ is a semisimple algebraic group of rank $r$. Let $K^0$ be the algebraic closure of $K$ in $E$ and choose a differential subfield $M$ of $K^0$ so the $M^0$ is $k-$irreducible. Let $\mathscr H:=\mathscr G(L|M^0).$ Then, as observed in the proof of Theorem \ref{semisimple-case}, $\mathscr H$ is either  semisimple or we may assume $\mathscr G$ is simple and $\mathscr H$ is parabolic.  We also know that $$\mathrm{tr.deg}(M^0|k)\leq l\leq \mathrm{tr.deg}(k\langle y_1\rangle|k)\leq 4$$ and that   $\mathrm{rank}(\mathscr G)\leq l\leq 4.$

If $\mathscr H$  is semisimple then $\mathscr G/\mathscr H$ is an affine homogeneous space and therefore, from \cite[Section 5]{Kaga-Watabe} and \cite{Pop75},  $\mathscr G$ must be isogenus to one of the following groups: $$\mathrm{SL}_3(C), \quad \mathrm{SL}_2(C)\times \mathrm{SL}_2(C), \quad\mathrm{SL}_2(C).$$ In particular, all the simple component of $\mathscr G$ are classical simple algebraic groups. Therefore, from Remark \ref{bound-r(r+2)},  we obtain $$\mathrm{tr.deg}(L|k)=\mathrm{dim}(\mathscr G)\leq r(r+2)\leq l(l+2).$$ 

Finally, if $\mathscr H$ is parabolic and $\mathscr G$ is simple then since none of the exceptional simple algebraic groups have subgroups of codimension $\leq 4$, we obtain that $\mathscr G$ must be a classical simple group. Thus,  $\mathrm{tr.deg}(L|k)=\mathrm{dim}(\mathscr G)\leq r(r+2)\leq l(l+2).$ \end{proof}

In the next two examples, we consider the Lotka-Volterra system and the Poizat differential equation and prove some results in view of Theorem \ref{algebraicdependence-aode}.  
\begin{example}
   Consider  the Lotka-Volterra system  \begin{equation}\tag{L-V}
\left\{ \begin{aligned} 
	t_1' &= \alpha t_1+\beta t_1t_2\\
	t_2' &=\gamma t_2+\delta t_1t_2\\
\end{aligned} \right.
\end{equation} where $\alpha,\beta,\gamma,\delta\in C\setminus 0.$ From \cite[Theorem 4.2]{eagles-Jimenez}, we know that the system has a generic solution $y:=(y_1, y_2)\in \mathfrak U^2$ and the differential field $C(y_1,y_2)$ contains no elements $x$ such that $x'=1$ or $x'=cx$ for some nonzero $c\in C$ if and only if $\alpha\neq \gamma.$ Thus, if $\alpha\neq \gamma,$ then since $C(y_1,y_2)$ is a rational field, we obtain from Theorems \ref{intromain-SN} and \ref{algebraicdependence-aode} that $\mathfrak S_{C,\text{L-V},y}$ is infinite. 
\end{example}

\begin{example}(\cite{FJMN23})\label{example-poizat}
Consider the Poizat differential equation  \begin{equation}\label{poizateqn}f(t,t',t''):=t''-t'h(t)=0,\end{equation} where $h$ is a nonzero rational function in one variable over $C$.  We prove the following statements
\begin{enumerate}[(A)]
    \item \label{poizat-nnc} The equation (\ref{poizateqn}) has no generic solution  if and only if $h= \partial g/\partial t$ for some $g\in C(t).$ \\
    \item \label{poizatgeneric-algindp} If $y$ is a generic  solution of (\ref{poizateqn}) then $\mathfrak S_{C,f,y}$ is infinite. \\
    \item \label{trdonesolns-poizat} Suppose that $h(t)= \partial g(t)/\partial t$ for some  $g\in C(t).$ Then any solution $y$ of the equation (\ref{poizateqn}) is a solution of  $g_{c}(t,t'):=t'-g(t)-c=0$ for some $c$ in $C.$ Moreover, for any nonconstant solution $y$ of $g_{c}(t,t')=0,$ the set $\mathfrak S_{C,g_{c},y}=\mathfrak S_{C,f,y}=\{y\}$  if and only if either \begin{equation}\label{gdescription} \frac{1}{g-c}=\frac{\partial u}{\partial t} \ \ \text{or}\ \frac{1}{g-c}=\frac{1}{du}\frac{\partial u}{\partial t}\end{equation} for some $d\in C, u\in C(t).$
\end{enumerate}

\textsl{Proof of (\ref{poizat-nnc}).} Suppose that $h=\partial g/\partial t$ for some rational function $g\in C(t)$ and $y$ be a  solution of (\ref{poizateqn}). Then $(y'-g(y))'=0.$ Since $C_{C\langle y\rangle}=C,$ we must have for some $c\in C$ that $y'=g(y)+c\in C(y).$ That is, $C\langle y\rangle=C(y)$ and we obtain that $y$ is not a generic solution. 

To prove the converse, suppose that every solution of (\ref{poizateqn}) is nongeneric and consider the rational function field $C(x_1,x_2)$ with the derivation $$x_2\frac{\partial}{\partial x_1}+x_2h(x_1)\frac{\partial}{\partial x_2}.$$
We may assume that this differential field is contained in $\mathfrak U.$ Since $x''_1=x'_1h(x_1),$  there must be a constant in $C(x_1,x_2)\setminus C.$ Let $P, Q\in C(x_1)[x_2]$ be relatively prime polynomials, $Q$ be monic and $(P/Q)'=0.$ Then,
 $$Q\left(\frac{\partial P}{\partial x_1}+h(x_1)\frac{\partial P}{\partial x_2}\right)=P\left(\frac{\partial Q}{\partial x_1}+h(x_1)\frac{\partial Q}{\partial x_2}\right).$$
Therefore, there is an element $z\in C(x_1)[x_2]$ such that $$\left(\frac{\partial Q}{\partial x_1}+h(x_1)\frac{\partial Q}{\partial x_2}\right)=z Q\quad \text{and}\quad \left(\frac{\partial P}{\partial x_1}+h(x_1)\frac{\partial P}{\partial x_2}\right)=zP.$$

Writing $P=a_nx^n_2+a_{n-1}x^{n-1}_2+\cdots+a_0\in C(x_1)[x_2],$  $a_n\neq 0, a_{n-1},\dots, a_0\in C(x_1)$ and $Q=x^m_2+b_{m-1}x^{m-1}_2+\cdots+b_0\in C(x_1)[x_2], b_{m-1},\dots, b_0\in C(x_1),$ we obtain that \begin{align}& \frac{\partial a_n}{\partial x_1}x^n_2+\frac{\partial a_{n-1}}{\partial x_1}x^{n-1}_2+\cdots+\frac{\partial a_0}{\partial x_1}+na_nh(x_1)x^{n-1}_2+\dots+h(x_1)a_1 =z(a_nx^n_2+\dots+a_0)\label{equationforP}\\
&\frac{\partial b_{m-1}}{\partial x_1}x^{m-1}_2+\cdots+\frac{\partial b_0}{\partial x_1}+mh(x_1)x^{m-1}_2+\dots+h(x_1)b_1=z(x^m_2+b_{m-1}x^{m-1}_2+\dots+b_0)\label{equationforQ}
\end{align}
From Equation (\ref{equationforQ}), it is clear that $z=0.$ Now from Equation (\ref{equationforP}), we obtain that $a_n\in C$ and that $$h(x_1)=\frac{\partial g}{\partial x_1}, \ \text{where } g=\frac{-a_{n-1}}{na_n}.$$

\textsl{Proof of (\ref{poizatgeneric-algindp})}. If $y$ is a generic solution of the Poizat equation then it can be shown that there is no element $z\in C(y,y')$ such that $z'=1$ or $z'=cz$ for some nonzero $c\in C$ (\cite{eagles-Jimenez}). Therefore, from Theorem \ref{algebraicdependence-aode}, we obtain that $\mathfrak S_{C,f,y}$ is infinite.

\textsl{Proof of (\ref{trdonesolns-poizat})}. Observe that $y''=g(y)'=y'h(y).$ Therefore $y'-g(y)=c\in C.$ Since $C(y)$ is a rational field, by Theorems \ref{intromain-SN} (\ref{intromain-uni}) and \ref{algebraicdependence-aode} (\ref{auto-nongeneral-SN}), $\mathfrak S_{C,f,y}=\{y\}$ if and only if there is an element $u\in C(y)\setminus C$ such that $u'=1$ or $u'=du$ for some nonzero $d\in C$ if and only if  $u\in C(y)$  satisfying (\ref{gdescription}).
\end{example}

\begin{remark}
 Let $f(X,Y,Z)=Z-nYX^{n-1}$ for some $n\geq 3.$ Consider the Poizat equation $f(t,t',t'')=t''-nt't^{n-1}=0.$ Let $y_1, y_2\in \mathcal U$ be nonconstant solutions of the equations $t_1'-t_1^n=0$ and $t_2'-t_2^n+1=0$ respectively. Then, both $y_1$ and $y_2$ are solutions of the Poizat equation.  Observe that $1/t^n=\partial u(t)/\partial t$ for $u(t)=1/((n+1)t^{n+1}).$ However, there are no rational function $u$ and a nonzero constant $c$ such that $$\frac{1}{t^n-1}= 
     \frac{\partial u(t)}{\partial t} \ \text{or}\ 
    \frac{\partial u(t)/\partial t}{cu(t)}$$
 This shows that $\mathfrak S_{C,f,y_1}=\{y_1\},$ whereas, $\mathfrak S_{C,f,y_2}$ is an infinite set. 
\end{remark}

\section{Applications to $d-$solvability of linear differential equations}
In this section, we prove Theorem \ref{sol-td-higher}. We start with an equivalent definition of $d-$solvable extensions. A differential field  extension $E\supset k$ is said to be \emph{$d-$solvable} if there exists a tower of differential fields $$k=K_0 \subseteq\cdots\subseteq K_r$$ such that $E \subseteq K_r$ and for each $i, 1\leq i\leq r,$ either \begin{enumerate}[(i)]
    \item  $K_{i}$ is a finite algebraic extension of $K_{i-1},$ or
    \item $K_{i}$ is a Picard-Vessiot extension of $K_{i-1}$ such that $\mathscr G(K_{i}|K_{i-1})$ is a subgroup of $\mathrm{GL}_d.$
\end{enumerate}
 A linear algebraic group $\mathscr G$ is said to be \emph{$d-$solvable} if there exists a tower of closed subgroups $$\mathscr G_0=\{e\} \subseteq \mathscr G_1\subseteq \cdots \subseteq \mathscr G_r=\mathscr{G}$$ such that each $\mathscr G_{i-1}$ is normal in $\mathscr G_{i}$ and $\mathscr G_{i}/\mathscr G_{i-1}$ is either finite or a subquotient of $\mathrm{GL}_d.$ 
 
 It is known that a Picard–Vessiot extension $E\supset k$ is $d-$solvable if and only if the differential Galois group $\mathscr G (E|k)$ is $d-$solvable (see \cite[Theorem 1]{Ngu09}). We will also be using this equivalent definition to determine the  $d-$solvability of certain Picard-Vessiot extension. It is also known that  a $d^+-$solvable extension is a $d-$solvable extension for $d\geq2$ and that a $1^+-$solvable extension is not $1-$solvable, but it is $2-$solvable (see \cite[page 427]{Ngu09}). 

 The following two lemmas will be used in proving Theorem \ref{sol-td-higher}.
 
\begin{lemma}(cf. \cite[Theorem 3.1]{URVRS-21})\label{no-pv-element}
    Let $E$ be a Picard-Vessiot extension of $k$ and $k^0$ be the algebraic closure of $k$ in $E$. Suppose that  $\mathscr G:=\mathscr{G}(E|k)$ contains a proper parabolic subgroup  $\mathscr{P}$.  Then $T(E^\mathscr{P}|k)=k^0.$ 
\end{lemma}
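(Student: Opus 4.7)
My plan is to prove the nontrivial inclusion $T(E^\mathscr{P}|k)\subseteq k^0$. The reverse inclusion $k^0\subseteq T(E^\mathscr{P}|k)$ is easy: parabolic subgroups of a linear algebraic group are connected, so $\mathscr{P}\subseteq \mathscr{G}^0$ and hence $k^0=E^{\mathscr{G}^0}\subseteq E^\mathscr{P}$; and any element algebraic over $k$ lies in $T(E|k)$, because its successive derivatives are constrained to the finite-dimensional $k$-space $k(y)$ and must therefore be $k$-linearly dependent.

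For the hard direction, I fix a nonzero $y\in T(E^\mathscr{P}|k)$ and invoke the characterization from the preliminaries: the $C$-vector space $V_y:=\mathrm{span}_C\{\sigma(y):\sigma\in\mathscr{G}\}$ is then finite-dimensional and is naturally a $\mathscr{G}$-submodule of $E$. Let $\mathscr{P}_y$ denote the isotropy subgroup of the vector $y$ in $\mathscr{G}$. The hypothesis $y\in E^\mathscr{P}$ forces $\mathscr{P}\subseteq \mathscr{P}_y$, and because any closed subgroup of $\mathscr{G}$ containing a parabolic subgroup is itself parabolic, $\mathscr{P}_y$ is parabolic. In particular, the homogeneous space $\mathscr{G}/\mathscr{P}_y$ is complete.

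The geometric crux is the orbit map $\mathscr{G}\to V_y$, $\sigma\mapsto \sigma(y)$, which factors through an injective immersion $\mathscr{G}/\mathscr{P}_y\hookrightarrow V_y$ with image the orbit $\mathscr{G}\cdot y$. Completeness of the source forces its image in the separated ambient variety $V_y$ to be closed, and a closed subvariety of an affine variety is itself affine; hence $\mathscr{G}\cdot y$ is simultaneously complete and affine, which compels it to be finite. The connected group $\mathscr{G}^0$ acts on this finite orbit by permutation, so the $\mathscr{G}^0$-orbit of $y$ is a single point, giving $y\in E^{\mathscr{G}^0}=k^0$.

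I anticipate no real obstacle, since every ingredient (parabolicity of closed supergroups of parabolics, completeness of $\mathscr{G}/\mathscr{P}_y$, affineness of $V_y$, and the equality complete $+$ affine $=$ finite) is classical. The essential subtlety is that the hypothesis is $y\in E^\mathscr{P}$ rather than $[y]\in\mathbb{P}(V_y)^\mathscr{P}$; this is precisely what ensures $\mathscr{P}\subseteq \mathscr{P}_y$ (stabilizer of the vector, not the line), and is what ultimately rules out a nontrivial orbit arising from a character of $\mathscr{P}_y$ into $\mathrm{G}_m$.
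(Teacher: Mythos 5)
Your proof is correct and follows essentially the same route as the paper's: both hinge on the fact that a morphism from a complete variety (the paper uses $\mathscr{G}^0/\mathscr{P}$ directly, you use the orbit $\mathscr{G}/\mathscr{P}_y$) into the affine space spanned by the $\mathscr{G}$-translates of $y$ must have finite image, forcing $y$ to be fixed by $\mathscr{G}^0$ and hence lie in $k^0=E^{\mathscr{G}^0}$. The only cosmetic difference is that you pass through the full stabilizer $\mathscr{P}_y$ and the ``complete $+$ affine $\Rightarrow$ finite'' dichotomy, whereas the paper notes at once that the well-defined map $\mathscr{G}^0/\mathscr{P}\to\mathcal{O}_y$ from a projective variety to affine space is constant.
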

\begin{proof}
    Let $\mathscr G^0$ be the identity component of $\mathscr G$.  Then $k^0=E^{\mathscr G^0}$ and $E$ is a Picard-Vessiot extension of $k^0$ with $\mathscr G(E|k^0)=\mathscr G^0.$ Note that $T(E^\mathscr P|k)=T(E^\mathscr P|k)$ and therefore $k^0\subset T(E|k)^\mathscr P.$ Now let $y\in T(E|k^0)^\mathscr P,$ then the orbit set $\mathcal{O}_y$ of $y$ under the action of $\mathscr G^0$ is contained in a finite dimensional $\mathscr G^0-$stable $C-$vector space.  The induced map $\phi: \mathscr G^0/\mathscr P \to \mathcal{O}_y$, given by $\phi(\sigma+\mathscr P)=\sigma(y)$ for $\sigma \in \mathscr G^0$, is a morphism from a projective variety into some affine space containing $\mathcal{O}_y$. Thus $\phi$ must be a constant. That is, $y \in T(E|k^0)^{\mathscr G^0} = k^0.$ Since $T(E|k^0)=T(E|k),$ we now obtain that $k^0 = T(E^\mathscr P|k)$.  
\end{proof}

\begin{lemma}\label{solvability-pv-subfield}
    Let $E$ be a Picard-Vessiot extension of $k$ with a connected Galois group $\mathscr G:=\mathscr G(E|k)$. Let $K$ be a $k-$irreducible differential subfield of $E$ such that $K$ is algebraically closed in $E$ and $\mathrm{tr. deg}(K|k)=d\leq 4.$ Then $K$ is $(d+1)-$solvable extension of $k.$ Furthermore, if $T(K|k)\neq k,$ then $K$ is $d^+-$solvable for $d=1$ and $d-$solvable for $2\leq d\leq 4.$   
\end{lemma}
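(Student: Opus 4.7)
The plan is to apply Remark \ref{obv-intromain} to the relatively algebraically closed, $k$-irreducible subfield $K$ of $E$, obtaining one of the four mutually exhaustive structural alternatives (i)--(iv) recorded there, and then to verify the two solvability claims separately in each case. The hypothesis $d\leq 4$ enters only in cases (iii) and (iv), where it triggers the classification of semisimple and simple linear algebraic groups acting on low-dimensional (affine or projective) homogeneous spaces.

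Case (i) gives $y\in K$ with $\mathcal L(y)=b\in k$, $\mathrm{ord}(\mathcal L)=d$, and $K$ algebraic over $k\langle y\rangle$. By definition, $K$ is $d^+$-solvable in a single step, which (as recalled from \cite{Ngu09}) is $d$-solvable for $d\geq 2$ and $1^+$-solvable for $d=1$; the homogenised operator $\partial\circ\mathcal L$, of order $d+1$, simultaneously embeds $k\langle y\rangle$ in a Picard-Vessiot extension of $k$ whose Galois group lies in $\mathrm{GL}_{d+1}$, giving unconditional $(d+1)$-solvability. In Case (ii), $d=1$ and $K$ is algebraic over $k(z)$ with $z'/z\in k$; this is $1$-solvable (and a fortiori $1^+$- and $2$-solvable), and $z\in T(K|k)\setminus k$.

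In Case (iii), $K=F^{\mathscr H}$ sits inside a Picard-Vessiot extension $F$ of $k$ whose Galois group $\mathscr G(F|k)$ is semisimple of rank $\leq d-1\leq 3$, with $\mathscr H$ a maximal semisimple subgroup. Since $\mathscr H$ is reductive, the quotient $\mathscr G(F|k)/\mathscr H$ is an affine homogeneous space of dimension $d\leq 4$; by the classification already used in the proof of Theorem \ref{algebraicdependence-aode}(\ref{cardn+2-order4}) (\cite{Pop75}, Kaganovskii--Watabe), $\mathscr G(F|k)$ must be isogenous to one of $\mathrm{SL}_2$, $\mathrm{SL}_3$, or $\mathrm{SL}_2\times\mathrm{SL}_2$. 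Each of these admits a short normal series whose successive quotients embed in $\mathrm{GL}_n$ for some $n\leq 3\leq d$, so $\mathscr G(F|k)$ is $d$-solvable and hence so is $F$ (and its subfield $K$). In Case (iv), $K=L^{\mathscr P}$ for a maximal parabolic $\mathscr P$ of a simple group $\mathscr G(L|k)$ of rank $\leq d$; by \cite{CX18}, no exceptional simple group admits a subgroup of codimension $\leq 4$, so $\mathscr G(L|k)$ is classical, and inspection of the projective homogeneous varieties of dimension $\leq d$ shows that its standard representation has dimension $\leq d+1$. Consequently $L$ (and thus $K$) is $(d+1)$-solvable. Finally, Lemma \ref{no-pv-element} gives $T(L^{\mathscr P}|k)=k^0=k$ under the connectedness hypothesis, so Case (iv) is incompatible with $T(K|k)\neq k$ and the sharper bound need not be verified there.

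The main obstacle is the careful bookkeeping in cases (iii) and (iv): one must match the small-dimensional homogeneous-space classifications invoked elsewhere in the paper to the list of classical simple (or products of two) groups and then observe that their normal/representation structure keeps the Galois group a subquotient of $\mathrm{GL}_d$ (in (iii)) or $\mathrm{GL}_{d+1}$ (in (iv)). It is precisely at this step that the cap $d\leq 4$ is essential, because at $d=5$ new simple groups (such as $\mathrm{G}_2$) start to admit small codimension subgroups, and the representation-theoretic dimensions overshoot $d+1$.
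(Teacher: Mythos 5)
Your proposal is correct and follows essentially the same route as the paper: split according to the four alternatives of Remark \ref{obv-intromain}, use the homogenisation of the inhomogeneous equation in case (i), the classification of low-dimensional affine homogeneous spaces (Popov, Kaganovskii--Watabe) in case (iii) and of minimal-codimension maximal parabolics in case (iv), and invoke Lemma \ref{no-pv-element} to exclude the parabolic case when $T(K|k)\neq k$. The only differences are cosmetic --- the paper explicitly rules out $d\leq 3$ in case (iii) and tabulates the possible simple groups ($\mathrm{SL}_3$, $\mathrm{SL}_4$, $\mathrm{SP}_4$, $\mathrm{SL}_5$) in case (iv), where you argue by the same classification slightly more loosely.
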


\begin{proof}
    We first observe that one of the four situations mentioned in Remark \ref{obv-intromain} must hold for the differential field $K$ and accordingly we divide the proof of the lemma in to these four cases. 

    Suppose that  Remark \ref{obv-intromain} (\ref{from-unipotent}) holds. Then, $K$ is $d^+-$solvable extension of $k.$ Since a $d^+-$solvable extension is a $d-$solvable extension for $d\geq2$, it follows that $K$ is $d^+-$solvable for $d=1$ and $d-$solvable for $2\leq d\leq 4.$  
    
    Suppose that Remark \ref{obv-intromain} (\ref{from-torus}) holds then it immediately follows that $K$ is $1-$solvable. 

    Next, assume that Remark \ref{obv-intromain} (\ref{from-semisimple}) holds.  Then, $\mathscr {G}_1/\mathscr H$ is an affine homogeneous space, where $\mathscr G_1=\mathscr (F|k)$. Since $\mathrm{rank}(\mathscr G_1)\leq d-1,$ we immediately see that $d\neq 1,2.$  From \cite[section 3]{Pop75} it follows that $d\neq 3.$ Thus we are left to deal with the case when $d=4.$  In this case, from \cite[section 5]{Kaga-Watabe}, the only possible choices for $\mathscr G_1$ (upto isogeny) are   $\mathrm{SL_3}$ or $\mathrm{SL}_2\times \mathrm{SL}_2.$ Since $K$ is contained in $F,$ from \cite{Ngu09}[Theorem 2 \& table 1], $\mathscr G_1$ is $3-$solvable. Thus, we obtain that $K$ is $3$-solvable.

    Finally, suppose that Remark \ref{obv-intromain}(\ref{from-parabolic}) holds and let $\mathscr{G}_2:=\mathscr{G}(L|k).$ Maximal parabolic subgroups of simple algebraic groups having minimal codimension are known (see \cite[Table 1]{GS91}). This allows us to conclude the following 
    
    \begin{enumerate}[(i)]
        \item If $d=1$ then  $\mathscr G_2$ is isogeneous to $\mathrm{SL}_2,$ which is $2-$solvable. 
        \item If $d=2$ then  $\mathscr G_2$ is isogeneous to $\mathrm{SL_3},$ which is $3-$solvable. 
        \item If $d=3$ then $\mathscr G_2$ is isogeneous to either $\mathrm{SL_3}$ or $\mathrm{SL}_4$ or $\mathrm{SP}_4$. Since all these groups are subgroups of $GL_4,$ they are all $4-$solvable. 
        \item If $d=4$ then $\mathscr G_2$ is isogeneous to either $\mathrm{SL_3}$ or $\mathrm{SL}_4$ or $\mathrm{SP}_4$ or $\mathrm{SL}_5.$ In all these cases $\mathscr G_2$ is $5-$solvable.
    \end{enumerate}        

From the above conclusions and from \cite[Theorem 1]{Ngu09}, $L$ is $d+1-$ solvable and consequently we obtain $K$ is $d+1-$solvable.

To prove the last assertion of the lemma, assume that $T(K|k)\neq k.$ Note that $\mathscr G$ is connected and therefore $k$ is algebraically closed in $E.$ By Lemma \ref{no-pv-element}, $K$ cannot be the fixed field of a parabolic subgroup of the Galois group. Therefore, Remark \ref{obv-intromain}(\ref{from-parabolic}) can not hold for $K.$ This proves that $K$ is $d^+-$solvable for $d=1$ and $d-$solvable for $2\leq d\leq 4.$  \end{proof}

\begin{proof}[Proof of Theorem \ref{sol-td-higher}]
 Let $E$ be a Picard-Vessiot extension of $k$ and $K$ be the algebraic closure of $k\langle y\rangle$ in $E.$ Observe that $y$ is  $d-$solvable if and only if $K$ is a $d-$solvable extension of $k.$

 If $K$ is $k-$irreducible, then the first part of the theorem follows from Lemma \ref{solvability-pv-subfield}. Therefore, we can assume that $\mathrm{tr. deg}(K|k)\geq 2$ and $K$ is not $k-$irreducible. Then, $n\geq 2$ and $K$ can be resolved into a  tower of differential fields
  $k=K_0\subset K_1\subset\cdots\subset K_n=K,$ where for each $i=1,2,\dots, n,$ $K_i$ is $K_{i-1}-$irreducible differential field and that $K_i$ is algebraically closed in $E.$ Since for each $i,$ $\mathscr G(E|K_{i-1})$ is connected, by Lemma \ref{solvability-pv-subfield}, we obtain that $K_{i}$  is $d-$solvable extension of $K_{i-1}$.  This proves that $K$ is  a $d-$solvable extension of $k.$

 To prove the second part of the theorem, let $\mathcal L$ be irreducible.  The set of all $d-$solvable solutions $V$ of $\mathcal L(t)=0$ forms a $\mathscr G(E|k)-$stable $C-$vector space. Now from irreducibility of $\mathcal L,$ it follows that if $V\neq 0$ then $\mathrm{dim}_C(V)=\mathrm{ord}(\mathcal L)$ (\cite[Corollary 2.35]{vdPS03}). Thus, in particular, we observe that if $\mathcal L(t)=0$ has a $d-$solvable solution for some $d$ then it has a $C-$basis of $d-$solvable solutions. 

 Let $2\leq d\leq 4.$ Then,
 from the above observation and from the first part of the theorem, it readily follows that $\mathcal L(t)=0$ has a $C-$basis of $d-$solvable solutions. Now 
 assume that $d=1.$  Then $k\langle y\rangle$ is $k-$irreducible and therefore from the first part of the theorem, we obtain that $y$ is a $1^+-$solvable solution of  $\mathcal{L}(t)=0$. Since $1^+-$solvable extension fields are subfields of liouvillian extension fields,  $\mathcal L(t)=0$ has a nonzero liouvillian solution $z\in E$ such that $z'/z$ is algebraic over $k$ (\cite[Proposition 1.45]{vdPS03}). Clearly $z$ is a $1-$solvable solution of $\mathcal{L}(t)=0.$ Now, from the observation, we obtain that $\mathcal L(t)=0$ has a basis of $1-$solvable solutions. \end{proof}

\begin{remark}
   The second part of the Theorem \ref{sol-td-higher}  can be rephrased as follows: If $\mathcal L\in k[\partial]$ is irreducible and $\mathcal{L}(t)=0$ has a solution $y$ such that $d:=\mathrm{tr.deg}(k\langle y\rangle \leq 4$, then the differential Galois group of $\mathcal{L}(t)=0$ is $d-$solvable.
\end{remark}



\bibliographystyle{abbrv}
\bibliography{PKVRS-DESNE}
\end{document}